\documentclass[12pt, twoside]{article}
\usepackage{amsmath,amsthm,amssymb}
\usepackage{times}
\usepackage{enumerate}
\usepackage{eucal}
\usepackage[all]{xy}
\vfuzz2pt 

\pagestyle{myheadings}
\def\titlerunning#1{\gdef\titrun{#1}}
\makeatletter
\def\author#1{\gdef\autrun{\def\and{\unskip, }#1}\gdef\@author{#1}}
\def\address#1{{\def\and{\\\hspace*{18pt}}\renewcommand{\thefootnote}{}%
\footnote {#1}}%
\markboth{\autrun}{\titrun}}
\makeatother
\def\email#1{e-mail: #1}
\def\subjclass#1{{\renewcommand{\thefootnote}{}%
\footnote{\emph{Mathematics Subject Classification (2010):} #1}}}


\newtheorem{teo}{Theorem}[section]
\newtheorem{cor}[teo]{Corollary}
\newtheorem{lemma}[teo]{Lemma}
\newtheorem{prop}[teo]{Proposition}
\theoremstyle{definition}
\newtheorem{defo}[teo]{Definition}
\newtheorem{remark}[teo]{Remark}

\newtheorem{ejem}[teo]{Example}
\numberwithin{equation}{subsection}

\frenchspacing

\textwidth=15cm
\textheight=23cm
\parindent=16pt
\oddsidemargin=-0.5cm
\evensidemargin=-0.5cm
\topmargin=-0.5cm

\newcommand{\mono}{\hookrightarrow}

\newcommand{\ra}{\rightarrow}

\newcommand{\epi}{\mbox{$\to$\hspace{-0.35cm}$\to$}}

\def\umono{\ar@{_{(}->}[u]}
\def\uumono{\ar@{_{(}->}[uu]}

\def\lmono{\ar@{_{(}->}[l]}
\def\llmono{\ar@{_{(}->}[ll]}

\newcommand{\limdir}{\operatornamewithlimits{\hbox{$\varinjlim$}}}

\newcommand{\holim}{\operatornamewithlimits{holim}}
\newcommand{\hocolim}{\operatornamewithlimits{hocolim}}

\newcommand{\Hom}{{\rm Hom \,}}
\newcommand{\map}{{\rm map \,}}
\newcommand{\Rep}{{\rm Rep \,}}

\newcommand{\Z}{{\mathbb Z}}
\newcommand{\F}{{\mathbb F}}
\newcommand{\Q}{{\mathbb Q}}

\CompileMatrices
\SelectTips{cm}{}

\begin{document}

\baselineskip=17pt

\titlerunning{Homotopy idempotent functors on classifying spaces}
\title{Homotopy idempotent functors on classifying spaces}
\author{Nat\`alia Castellana and Ram\'on Flores}

\maketitle
\address{N. Castellana: Departamento de Matem\'aticas, Universidad Aut\'onoma de Barcelona, 08193
Bellaterra, Spain; \email{natalia@mat.uab.es}
\and
R. Flores: Departamento de Estad{\'\i}stica, Universidad Carlos III, 28029 Colmenarejo, Spain; \email{rflores@est-econ.uc3m.es} (corresponding author)}
\subjclass{Primary 55P20; Secondary 55P80}
\begin{abstract}
Fix a prime $p$. Since their definition in the context of Localization Theory, the homotopy functors $P_{B\Z/p}$ and $CW_{B\Z/p}$ have shown to be powerful tools to understand and describe the mod $p$ structure of a space. In this paper, we study the effect of these functors on a wide class of spaces which includes classifying spaces of compact Lie groups and their homotopical analogues. Moreover, we investigate their relationship in this context with other relevant functors in the analysis of the mod $p$
homotopy, such as Bousfield-Kan completion and Bousfield homological localization.
\end{abstract}

\section{Introduction}

Let $A$ and $X$ be two connected topological spaces. The study of
the homotopical properties of $X$ that are visible through the
mapping space $\map(A,X)$ is called the $A$-homotopy
theory of $X$ and was proposed by E. Dror-Farjoun in \cite{Dror-Farjoun95}. In this
context, it is particularly important to describe the behaviour of
the nullification $P_{\Sigma^iA}$ and the cellularization
${CW}_{\Sigma^iA}$ (see definitions in Section 2), which are
functors that play in $A$-homotopy theory the same role as the
connected covers and Postnikov pieces play in classical ($S^0$-)
homotopy theory.

Let $p$ be a prime. If $X$ is a space and we are interested in
describing the $p$-primary part of $X$ through its $A$-homotopy
theory for some space $A$, there are some choices of $A$ that
become apparent. Probably the easiest one are the Moore spaces
$M(\Z /p^n,1)$ and their suspensions; this task was undertaken in
the nineties by Rodriguez-Scherer in the case of cellularization
\cite{MR2003a:55024} and Bousfield \cite{B1}, who did not only described
${P}_{M(\Z /p^m,n)}X$ for a wide number of spaces, but
remarked the close relationship between these functors and the
$v_n$-periodic homotopy theory.

In this paper we deal with the case $A= B\Z /p$. After Miller's
solution of Sullivan conjecture \cite{Miller}, and subsequent work of
Lannes, Dwyer-Zabrodsky and others, a number of new powerful
tools were available to researchers attempting to study the
mapping space $\map(B\Z /p,X)$, and overwhelming success
was reached, particularly for nilpotent spaces $X$. In the framework we
are interested in, we should emphasize the work of Neisendorfer \cite{MR96a:55019}, where
the author proves that the functor ${P}_{ B\Z /p}$ can
often recover the $p$-primary homotopy of $X$ from that of its
$n$-connected cover, or \cite{CCS2}, about the $ B\Z /p$-homotopy of
$H$-spaces.

The first motivation for our work came from two different sources:  the study undertaken by W. Dwyer in \cite{MR97i:55028} concerning $B\Z/p$-nullification of classifying spaces of compact Lie groups whose group of components is a $p$-group, and its relationship with the homology decompositions of $BG$; and Question 11 in Farjoun's book (\cite[page 175]{Dror-Farjoun95}), where he asked about the cellularity of the $p$-completion of $BG$. This seemed a natural extension of the problems considered by the second author concerning $B\Z/p$-homotopy of finite groups (see \cite{Ramon}, \cite{FS07}, and \cite{FFcw}), so it was natural to investigate this structure with similar methods.


We show that under certain hypothesis, we are able to characterize the effect of the nullification functor $P_{B\Z/p}$ by means of a fibration.

\medskip
{\bf Theorem~\ref{generalmainthmnull}.}
\noindent {\it Let $X$ be a connected space with finite fundamental group and such that $(P_{B\Z/p}(X\langle 1 \rangle))^\wedge_p\simeq *$. Then there is a fibration
$$L_{\Z[\frac{1}{p}]}(X_p)\rightarrow P_{B\Z/p}(X)\rightarrow B(\pi_1(X)/T_p(\pi_1(X)))$$ where $X_p$ is the covering of $X$ whose fundamental group is $T_p(\pi_1(X))$, and $L_{\Z[\frac{1}{p}]}(X_p)$ denotes Bousfield homological localization of $X_p$ with respect to  $H^*(-;\Z[\frac{1}{p}])$.}
\medskip

In particular, one can compute the homotopy groups of $P_{B\Z/p}(X)$ in terms of those of $X$ if $X$ is good enough. This result is quite general, and in fact describes in a single statement a phenomenon which was previously known for finite groups, $p$-compact groups and some compact Lie groups, but not for $p$-local compact or Ka\v{c}-Moody groups (Corollary \ref{KacMoodygroups}); so, it can be read then as a common property of a big family of homotopy meaningful spaces.  Moreover, finite loop spaces also satisfy this property (Corollary \ref{finiteloopspaces}).

Another source of examples is the theory of infinite loop spaces. McGibbon \cite{McGibbon} shows that infinite loop spaces satisfy the hypothesis of Theorem \ref{generalmainthmnull} (see Corollary~\ref{infiniteloopspaces}).


We also obtain some interesting consequences of these results, including a detailed analysis of the relationship between the $B\Z/p$-nullification and $\Z[1/p]$-localization of these spaces -which is very much in the spirit of \cite[Section 6]{MR97i:55028}- and the commutation of nullity functors on them, a situation that was discussed in \cite{RS} in a general framework.

The second part of the paper deals with the effect of the cellularization functor $CW_{B\Z/p}$ on classifying spaces of compact Lie groups. We show a Serre-type dichotomy theorem.

\medskip
{\bf Theorem~\ref{thmdichotomyBG}.}
\noindent {\it Let $G$ be a compact connected Lie group. If there exists a non $p$-cohomologically central element of order $p$, then  the $B\Z /p$-cellullarization of $BG$ has infinitely many nonzero homotopy groups. Otherwise, it has the homotopy type of a $K(V,1)$, where $V$ is a finite elementary abelian $p$-group.}
\medskip

This statement is in fact a consequence of a more general statement which extends Proposition 2.3 in \cite{FS07}.

\medskip
{\bf Theorem~\ref{thmdichotomy}.}
\noindent {\it Let $X$ be a connected nilpotent $\Sigma^n B\Z/p$-null space for some $n\geq 0$. Then the $B\Z /p$-cellullarization of $X$ has the homotopy type of a Postnikov piece with homotopy groups are concentrated in degrees $1$ to $n$ , or else it has infinitely many nonzero homotopy groups. Moreover, if $X$ is $1$-connected of finite type, then the fundamental group $\pi_1(CW_{B\Z/p}(X))$ is a finite elementary abelian $p$-group.}
\medskip

This result opens the way to describe with precision (up to $p$-completion) the $B\Z/p$-cellularization of $BG$ for an ample class of Lie groups which includes p-toral groups and their discrete approximations, the 3-sphere, extensions of elementary abelian groups by groups of order prime to $p$ -which generalize \cite[Corollary 3.3]{FS07}-, or $BSO(3)$. In particular, we find examples of both cases of the dichotomy statement. It is interesting to remark here that we use intensively the fact that $CW_A$ preserves nilpotent spaces (Lemma 2.5), a fact that was conjectured in \cite{Dror-Farjoun95}, but which to our knowledge has not been so far exploited in the literature.

{\bf Notation:} Let $R$ be a commutative ring, $R_\infty(X)$ denotes Bousfield-Kan $p$-completion of a space $X$ (\cite{BK}). When $R=\Z/p$ for a prime $p$, $R_\infty(X)$ will be used instead of $X^\wedge_p$. Moreover, $L_R(X)$  denotes the $HR$-localization of Bousfield (\cite{MR0380779}). All spaces are assumed to have the homotopy type of a $CW$-complex.

\medskip

\section{The cellularization and nullification functors}

Let $A$ be a connected space. In this section we will define the functors $CW_A$ and $P_A$, which are the main tools we use to describe the $p$-primary structure of the spaces of interest in our work. Only some particular features of these functors, that will be crucial in our
further developments, will be described while their relationship with Bousfield-Kan completion will be studied in the next section. A thorough account to these constructions can be found in \cite{Dror-Farjoun95}.

\begin{defo}
Let $A$ and $X$ be spaces. Then $X$ is called $A$-\emph{null} if the inclusion of constant maps $X\hookrightarrow \map(A,X)$ is a weak equivalence.
\end{defo}

This is equivalent  to the condition that $\map_*(A,X)$ is weakly contractible when $X$ is connected. Dror-Farjoun defines a coaugmented and idempotent functor ${P}_A:\mathbf{Spaces}\ra \mathbf{Spaces}$ where ${P}_AX$ is $A$-null for every $X$, and such that the coaugmentation $X\ra {P}_AX$ induces a weak equivalence $\map({P}_AX,Y)\ra \map(X,Y)$ for every $A$-null space $Y$. The corresponding definitions in the pointed context are completely analogous. Note that in the language of homotopy localization, $P_A$ is the localization with regard to the constant map $A\ra {*}$, and the notation comes from Postnikov sections, which are in fact $S^n$-nullifications. Moreover, a space $X$ such that $P_AX\simeq *$ is called $A$-\emph{acyclic}.

Now we consider the cellular construction, which is somewhat dual of the previous construction, although not completely (see Theorem \ref{Chacho} below).

\begin{defo}
Given pointed spaces $A$ and $X$, $X$ is said $A$-\emph{cellular} if it can be built from $A$ by means of pointed homotopy colimits, possibly iterated. Moreover, a map $X\ra Y$ is said to be an $A$-\emph{equivalence} if it induces a weak equivalence $\map_*(A,X)\ra \map_*(A,Y)$.
\end{defo}

The $A$-cellularization (or $A$-cellular approximation) is a canonical way of turning every space into an $A$-cellular space from the point of view of $A$-equivalences, which generalizes the classic process of cellular approximation. There exists an augmented endofunctor ${CW}_A$ of the category of pointed spaces, such that for every space $X$ the augmentation ${CW}_AX\ra X$ is an $A$-equivalence, and in initial among all maps $Y\ra X$ which induce $A$-equivalence. Unlike ${P}_A$, this functor only makes sense in the pointed  context (\cite[7.4]{Chacholski96}), and can be characterized in several ways \cite[2.E.8]{Dror-Farjoun95}.

The remaining of the section is devoted to describe some properties of these functors that we will frequently use later. We begin with a theorem of W. Chach\'olski that can be considered the most powerful tool to compute cellularization of spaces in an explicit way. The proof can be found in \cite[20.3]{Chacholski96}.

\begin{teo}
\label{Chacho}
Let $A$ and $X$ be pointed spaces, and let $C$ be the homotopy cofibre of the evaluation $\bigvee_{[A,X]_*}A\ra X$, where the wedge is taken
over all the homotopy classes of maps $A\ra X$. Then $CW_AX$ has the homotopy type of the fibre of the map $X\ra P_{\Sigma A}C$.
\end{teo}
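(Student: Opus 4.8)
The plan is to verify, for the candidate $F:=\hofib(X\ra P_{\Sigma A}C)$ together with its natural map $F\ra X$, the two properties that pin down the $A$-cellularization: that $F\ra X$ is an $A$-equivalence, and that $F$ is $A$-cellular. Granting both, the universal property recalled above identifies $F\simeq CW_AX$ over $X$. Indeed, an $A$-equivalence induces a weak equivalence on $\map_*(Z,-)$ for every $A$-cellular $Z$ (the class of such $Z$ contains $A$ and is closed under pointed homotopy colimits, since $\map_*$ sends homotopy colimits in the source to homotopy limits); applying this to the two $A$-equivalences $F\ra X$ and $CW_AX\ra X$ produces lifts $F\ra CW_AX$ and $CW_AX\ra F$ over $X$ that are mutually inverse by uniqueness.

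First I would show that $F\ra X$ is an $A$-equivalence. Since $\map_*(A,-)$ preserves homotopy fibres, the fibration $F\ra X\ra P_{\Sigma A}C$ yields a fibration sequence $\map_*(A,F)\ra\map_*(A,X)\ra\map_*(A,P_{\Sigma A}C)$, so it suffices to prove that the base is weakly contractible. This is exactly where the suspension in $P_{\Sigma A}$ is essential: as $P_{\Sigma A}C$ is $\Sigma A$-null and $\map_*(\Sigma A,Y)\simeq\Omega\map_*(A,Y)$, the mapping space $\map_*(A,P_{\Sigma A}C)$ has no homotopy in positive degrees and collapses to the discrete set $[A,P_{\Sigma A}C]_*$. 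It then remains to check that this set is trivial, and here the cofibre construction enters: the evaluation $\bigvee_{[A,X]_*}A\ra X$ realizes every class, so each composite $A\ra X\ra C$ is null, and I would propagate this vanishing through the coaugmentation $C\ra P_{\Sigma A}C$.

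The main obstacle is the remaining assertion, that $F$ is itself $A$-cellular. Cellularity is not inherited by homotopy fibres, so no formal closure property applies and the argument must exploit the concrete shape of the construction. My approach would be to study $F$ through the factorization $X\ra C\ra P_{\Sigma A}C$: a Ganea-type analysis describes $\hofib(X\ra C)$ in terms of the wedge $\bigvee A$ and $\Omega C$, while $\hofib(C\ra P_{\Sigma A}C)$ carries the cellular data coming from $\Sigma A$; I would then try to assemble these, using that the $A$-cellular spaces form the smallest class containing $A$ closed under pointed homotopy colimits, to place $F$ inside that class. I expect the genuine difficulty to lie precisely here, in controlling the interaction between the coning-off that produces $C$ and the nullification $P_{\Sigma A}$ finely enough to keep the total fibre cellular.
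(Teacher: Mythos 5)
This statement is not proved in the paper: it is Chach\'olski's theorem, quoted with a citation to \cite[20.3]{Chacholski96}, so there is no internal proof to compare against. Judged on its own terms, your outline has the right architecture (reduce to showing $F\ra X$ is an $A$-equivalence and $F$ is $A$-cellular, then invoke the universal property), and the first half essentially works. One correction there: you do not need, and your argument would not give, triviality of the whole set $[A,P_{\Sigma A}C]_*$ --- a map $A\ra P_{\Sigma A}C$ need not factor through $C$, so ``propagating the vanishing through the coaugmentation'' proves nothing about arbitrary classes. What you actually need is weaker and is exactly what the cofibre construction provides: since $\map_*(A,P_{\Sigma A}C)$ is homotopically discrete, $\map_*(A,F)$ is the union of those components of $\map_*(A,X)$ landing in the null component, and every map $A\ra X$ is a wedge summand of the evaluation, hence dies in $C$ and a fortiori in $P_{\Sigma A}C$. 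So all of $\map_*(A,X)$ lands in the null component and $F\ra X$ is an $A$-equivalence.

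The genuine gap is the one you flag yourself: the $A$-cellularity of $F$ is asserted as a hope, not proved. This is the entire content of Chach\'olski's theorem, and it cannot be obtained by soft closure properties, since $A$-cellularity is not preserved by homotopy fibres. The actual proof requires the machinery of closed classes and cellular inequalities developed in \cite{Chacholski96}: one needs quantitative control of $\hofib(X\ra C)$ for the cofibration $\bigvee A\ra X\ra C$ (via Ganea-type splittings involving joins with loop spaces), the comparison of fibres over $C$ and over $P_{\Sigma A}C$, and the closure of the class of $A$-cellular spaces under the relevant fibrewise constructions. Your sketch names the ingredients (``Ganea-type analysis'', ``assemble these'') but supplies no argument for how the nullification $P_{\Sigma A}$ interacts with the fibre of $X\ra C$ so as to keep the total fibre inside the cellular class; without that, the proof is incomplete at its central step. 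As it stands, the proposal establishes only that $F\ra X$ is an $A$-equivalence, which identifies $CW_AX\simeq CW_AF$ but not $CW_AX\simeq F$.
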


Next we will describe two preservation properties, that will be used extensively as we will frequently focus our interest in simply
connected spaces and, more generally, nilpotent spaces.

\begin{lemma}\cite[2.9]{B2}\label{1-connectednullification}
If $X$ is $1$-connected then $P_{A}(X)$ is also $1$-connected.
\end{lemma}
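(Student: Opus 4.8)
The plan is to exploit the explicit transfinite construction of $P_A X$ recalled in \cite{Dror-Farjoun95} and to verify that it preserves $1$-connectivity at every stage. Recall that $P_A X$ may be built from $X$ as a (possibly transfinite) telescope $X = X_0 \to X_1 \to \cdots$ in which each successor step is a homotopy pushout coning off a wedge of maps out of the spaces $\Sigma^i A$, $i \ge 0$, chosen so as to kill the classes $[\Sigma^i A, X_\alpha]_* = \pi_i \map_*(A, X_\alpha)$; in the colimit the space $\map_*(A, P_A X)$ becomes contractible, so that $P_A X$ is $A$-null. Since $A$ is connected, every space $\Sigma^i A$ occurring here is connected, and $\Sigma^i A$ is in fact $1$-connected for $i \ge 1$. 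It therefore suffices to show that (a) attaching a cone along a map out of a connected space to a $1$-connected space again yields a $1$-connected space, and (b) transfinite telescopes of $1$-connected spaces are $1$-connected.

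For (a), let $Y$ be $1$-connected and $f : \Sigma^i A \to Y$ a map, and form the homotopy pushout $Y' = Y \cup_f C\Sigma^i A = \hocolim(* \leftarrow \Sigma^i A \xrightarrow{f} Y)$. As $\Sigma^i A$ and $Y$ are path-connected, the van Kampen theorem for homotopy pushouts gives $\pi_1(Y') \cong \pi_1(*) *_{\pi_1(\Sigma^i A)} \pi_1(Y)$. Since $\pi_1(Y) = 1$, this amalgamated product is the pushout of $1 \leftarrow \pi_1(\Sigma^i A) \to 1$ in the category of groups, which is trivial; hence $Y'$ is again $1$-connected. Observe that this computation uses only that the source $\Sigma^i A$ is connected, so it already handles the delicate case $i = 0$, where $A$ itself need not be simply connected.

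For (b), homotopy groups commute with the filtered homotopy colimits defining the telescope, so $\pi_1(P_A X) \cong \colim_\alpha \pi_1(X_\alpha)$; each term vanishes by (a) together with the base case $\pi_1(X) = 0$, and a filtered colimit of trivial groups is trivial. Connectivity is preserved at every stage, so $P_A X$ is $1$-connected. The single point that requires care — and the only real obstacle — is precisely the $i = 0$ stage, where cones are attached along the merely connected space $A$: the van Kampen computation above is exactly what guarantees that such attachments cannot create fundamental group, since gluing two simply connected pieces along a connected (but possibly non-simply-connected) space produces a simply connected space. Once this is in hand, the higher stages $i \ge 1$ are automatic because their sources are already simply connected.
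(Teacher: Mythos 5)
Your argument is correct: the transfinite cone-attaching construction of $P_AX$ only ever glues cones along wedges of the connected spaces $\Sigma^iA$, van Kampen shows such an attachment can only kill (never create) fundamental group, and $\pi_1$ commutes with the transfinite telescope. The paper itself gives no proof, citing Bousfield \cite[2.9]{B2} instead, and your argument is essentially the standard one underlying that reference, so there is nothing substantive to compare; the one point worth keeping explicit is exactly the one you flag, namely that the $i=0$ attachments along $A$ itself are harmless because coning off a connected subspace of a simply connected space yields a quotient of the trivial group.
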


In particular, note that, according to a famous result of Neisendorfer \cite[Thm 0.1]{MR96a:55019}, there is no analogous result for higher degrees of connectivity.

The second preservation property concerns to cellularization and it answers question $7$ stated by Dror-Farjoun in his book \cite[p.175]{Dror-Farjoun95}. It is remarkable that the analogous problem in the category of groups was solved in \cite{FGS07}.

\begin{lemma}\label{nilpotent-cellularization}
If $X$ is a nilpotent space then $CW_A(X)$ is also nilpotent.
\end{lemma}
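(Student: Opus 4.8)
The plan is to feed $X$ into Chach\'olski's fibration (Theorem~\ref{Chacho}). Writing $C$ for the homotopy cofibre of the evaluation $\bigvee_{[A,X]_*}A\to X$, we obtain a fibration $F\to X\to B$ with $F=CW_A X$ and $B=P_{\Sigma A}C$, and the task becomes to show that the fibre $F$ is nilpotent knowing that the total space $X$ is. Throughout I would exploit that $\Sigma A$ is simply connected: by van Kampen $\pi_1 C$ is the quotient of $\pi_1 X$ by the normal closure of the images of $\pi_1 A$, and using the standard fact that nullification at a simply connected space preserves $\pi_1$ (in the spirit of, but distinct from, Lemma~\ref{1-connectednullification}), the group $\pi_1 B\cong\pi_1 C$ is a quotient of the nilpotent group $\pi_1 X$, hence nilpotent.

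First I would settle $\pi_1 F$. Reading the fibration as the principal fibration $\Omega B\to F\to X$, the long exact sequence yields a central extension $1\to\mathrm{im}(\partial)\to\pi_1 F\to\mathrm{im}(\pi_1 F\to\pi_1 X)\to 1$, where $\mathrm{im}(\partial)$ is the image of $\partial\colon\pi_2 B\to\pi_1 F$; this image equals $\ker(\pi_1 F\to\pi_1 X)$ and is central because the fibre inclusion factors through the loop space $\Omega B$. The quotient is a subgroup of the nilpotent group $\pi_1 X$, and a central extension of a nilpotent group is nilpotent, so $\pi_1 F$ is nilpotent.

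It then remains to show that $\pi_1 F$ acts nilpotently on $\pi_n F$ for $n\ge 2$. The intrinsic action factors through $i_*\colon\pi_1 F\to\pi_1 X$ and the action of $\pi_1 X=\pi_1 E$ on the fibre, so I would argue $\pi_1 X$-equivariantly. The long exact sequence presents $\pi_n F$ as an extension of $\pi_1 X$-modules whose quotient $\mathrm{im}(i_*)$ is a submodule of $\pi_n X$, on which $\pi_1 X$ acts nilpotently since $X$ is nilpotent, and whose sub $\ker(i_*)$ is a quotient of $\pi_{n+1}B$ with $\pi_1 X$ acting through $p_*\colon\pi_1 X\to\pi_1 B$. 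As an extension of modules carrying nilpotent actions again carries a nilpotent action, the whole problem reduces to controlling the action of $\pi_1 B$ on the higher homotopy of $B=P_{\Sigma A}C$.

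This last reduction is where I expect the real difficulty to lie: $C$ is merely a cofibre and is in no way nilpotent, so one cannot simply invoke nilpotency of $B$. The plan here is to combine the fact that $\Sigma A$ is a suspension (so that $B$ is $\Sigma A$-null with $\pi_1 B$ nilpotent) with a nilpotent-fibration criterion in the spirit of Hilton--Mislin--Roitberg, comparing the fibration above with the cofibre sequence defining $C$, in order to prove that $\pi_1 B$ operates nilpotently on $\pi_*(B)$ and hence on $\pi_*(F)$. Once this nilpotency of the monodromy is established, the extension argument of the previous paragraph closes the proof. Securing this controlled action on the nullified cofibre, rather than the formal bookkeeping with homotopy groups, is the crux of the argument.
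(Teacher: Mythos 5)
Your setup coincides with the paper's: its entire proof consists of applying \cite[V.5.2]{BK} (the fibre of a fibration with nilpotent total space is nilpotent) to the Chach\'olski fibration $F=CW_A(X)\to X\to B=P_{\Sigma A}C$, and your handling of $\pi_1F$ is correct. The gap is precisely the step you flag as the crux, which is in fact not a crux at all. To show that $\pi_1F$ acts nilpotently on $\pi_nF$ you only need the action of $\pi_1F$ --- not of all of $\pi_1X$ --- on the submodule $\ker(i_*)=\mathrm{im}(\partial)$. The boundary $\partial\colon\pi_{n+1}B\to\pi_nF$ is equivariant for the $\pi_1X$-action on $\pi_{n+1}B$ through $p_*$, and the composite $p_*\circ i_*\colon\pi_1F\to\pi_1B$ is trivial because $F\to X\to B$ is nullhomotopic; hence $\pi_1F$ acts \emph{trivially} on $\mathrm{im}(\partial)$. (This is the same observation that makes $\mathrm{im}(\partial)$ central in $\pi_1F$ in your degree-one argument, applied one degree up.) So $\pi_nF$ is an extension of a trivial $\pi_1F$-module by a submodule of $\pi_nX$ on which $\pi_1F$ acts nilpotently via $i_*$, and no control whatsoever of the action of $\pi_1B$ on $\pi_*(B)$ is required.

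By contrast, what your last paragraph sets out to prove --- that $\pi_1B$ acts nilpotently on $\pi_*(P_{\Sigma A}C)$, i.e.\ essentially that $P_{\Sigma A}C$ is nilpotent --- is both stronger than needed (it would make the fibration itself nilpotent, not merely its fibre) and not something you can expect to establish: $C$ is an uncontrolled cofibre, and nothing forces its $\Sigma A$-nullification to be nilpotent. As written the proposal is therefore incomplete at its decisive step; it becomes a complete proof once the final paragraph is replaced by the triviality-of-the-action observation above, which is exactly the content of the result the paper cites.
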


\begin{proof}
Apply \cite[V.5.2]{BK} to the fibration $CW_A(X)\rightarrow X\rightarrow P_{\Sigma A}C$ in Theorem \ref{Chacho}.
\end{proof}

From the definitions, one can check that if $X$ is $A$-null then $CW_A(X)\simeq *$ since $*\hookrightarrow X$ is an $A$-equivalence. In general, the $A$-cellularization functor also preserves $\Sigma^n A$-nullity for $n\geq 1$.

\begin{lemma}\label{suspension-null}
Let $X$ be a space which is $\Sigma^n A$-null for some $n\ge 1$ then $CW_A(X)$ is also $\Sigma^n A$-null.
\end{lemma}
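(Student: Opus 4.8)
The plan is to compute $CW_A(X)$ via Chach\'olski's theorem (Theorem~\ref{Chacho}) and then exploit the fact that the class of $\Sigma^n A$-null spaces is closed under the formation of homotopy fibres. Recall from Theorem~\ref{Chacho} that $CW_A(X)$ sits in a fibration
$$CW_A(X)\ra X\ra P_{\Sigma A}C,$$
where $C$ is the homotopy cofibre of the evaluation $\bigvee_{[A,X]_*}A\ra X$. Thus $CW_A(X)$ is the homotopy fibre of $X\ra P_{\Sigma A}C$, equivalently the homotopy pullback of the cospan $X\ra P_{\Sigma A}C\la *$.

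The first step is the elementary observation that $A$-nullity propagates to suspensions: for any pointed space $Y$ the loop-suspension adjunction gives a natural weak equivalence $\map_*(\Sigma^k A,Y)\simeq \Omega^k\map_*(A,Y)$, so if $Y$ is $A$-null then $Y$ is $\Sigma^k A$-null for every $k\ge 0$. Applying this with $A$ replaced by $\Sigma A$, I conclude that $P_{\Sigma A}C$, which is $\Sigma A$-null by construction, is $\Sigma^m A$-null for every $m\ge 1$; in particular it is $\Sigma^n A$-null, since $n\ge 1$.

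The second step is to assemble these facts. In the cospan $X\ra P_{\Sigma A}C\la *$ all three spaces are $\Sigma^n A$-null: $X$ by hypothesis, $P_{\Sigma A}C$ by the previous paragraph, and $*$ trivially. The class of $W$-null spaces is precisely the class of local objects for the nullification $P_W$, and such a class is closed under homotopy limits; equivalently, $\map(W,-)$ preserves homotopy pullbacks. Applying $\map(\Sigma^n A,-)$ to the homotopy pullback square defining $CW_A(X)$ therefore yields the homotopy pullback of $\map(\Sigma^n A,X)\ra \map(\Sigma^n A,P_{\Sigma A}C)\la \map(\Sigma^n A,*)$, whose corners are weakly equivalent, via the respective nullity maps, to $X$, $P_{\Sigma A}C$ and $*$. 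Comparing with the original square shows that the coaugmentation $CW_A(X)\ra \map(\Sigma^n A,CW_A(X))$ is a weak equivalence, i.e. $CW_A(X)$ is $\Sigma^n A$-null.

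The only point requiring care is this last one, the closure of $\Sigma^n A$-null spaces under homotopy pullbacks. This is a standard feature of nullification localizations, but when invoking it one should keep track of base points and of the behaviour of $\map_*$ versus $\map$, since $\map(\Sigma^n A,-)$ must be applied to the full (possibly disconnected) square rather than fibrewise. Beyond this categorical closure property there is no genuine computation involved.
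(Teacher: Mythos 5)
Your proposal is correct and follows essentially the same route as the paper: both use Chach\'olski's fibration $CW_A(X)\to X\to P_{\Sigma A}C$, note that the base is $\Sigma^n A$-null because it is $\Sigma A$-null, and conclude that the fibre is $\Sigma^n A$-null (the paper phrases this last step via Farjoun's preservation of fibrations with null base, you phrase it via closure of null spaces under homotopy pullbacks, which is the same fact).
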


\begin{proof}
Again from Theorem \ref{Chacho} we have a fibre sequence $CW_A(X)\rightarrow X \rightarrow P_{\Sigma A}(X)$. Since the base space is $\Sigma A$-null, it is also $\Sigma^n A$-null for any $n\geq 1$. The result follows since the nullification functor $P_{\Sigma A}$ preserves then the fibration \cite[3.D.3]{Dror-Farjoun95}.
\end{proof}

If we specialize now to $A= B\Z /p$, which is the case of interest in this paper, and we turn our attention to Eilenberg-MacLane spaces, it is interesting to observe that given an arbitrary group $G$, the $B \Z /p$-nullity properties of  $K(G,n)$ for small values of $n$ imply
the $B\Z /p$-nullity for $every$ value of $n$, as well as some group-theoretic features of $G$.

\begin{lemma}\label{nullK(G,n)}
Let $G$ be an abelian discrete group.  $K(G,2)$ is $B\Z/p$-null if and only if $p$ is invertible in $G$ and $K(G,n)$ is $B\Z/p$-null for all $n\geq 1$.
\end{lemma}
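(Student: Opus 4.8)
The plan is to reduce the $B\Z/p$-nullity of each $K(G,n)$ to a purely (co)homological computation. Since $K(G,n)$ is connected, by the observation following Definition 2.1 it is $B\Z/p$-null exactly when the pointed mapping space $\map_*(B\Z/p, K(G,n))$ is weakly contractible, i.e.\ when all its homotopy groups vanish. Because $G$ is abelian, $K(G,n)$ carries an (infinite) loop space structure, $\Omega^i K(G,n)\simeq K(G,n-i)$, so the pointed exponential law together with the suspension isomorphism identify
\[
\pi_i\,\map_*(B\Z/p, K(G,n))\;\cong\;[\,S^i\wedge B\Z/p,\,K(G,n)\,]_*\;\cong\;\widetilde H^{\,n-i}(B\Z/p;G)
\]
for every $i\ge 0$. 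Reading this across $i\ge 0$, I conclude that $K(G,n)$ is $B\Z/p$-null if and only if $\widetilde H^{\,j}(B\Z/p;G)=0$ for all $1\le j\le n$.

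Next I would compute these cohomology groups. The integral homology of $B\Z/p=K(\Z/p,1)$ is $\Z/p$ in each odd positive degree and $0$ in each even positive degree, so the universal coefficient theorem gives, for $j\ge 1$,
\[
\widetilde H^{\,j}(B\Z/p;G)\;\cong\;
\begin{cases}
\{g\in G : pg=0\}, & j\ \text{odd},\\[2pt]
G/pG, & j\ \text{even}.
\end{cases}
\]
In particular $\widetilde H^{1}(B\Z/p;G)$ detects the $p$-torsion of $G$ (injectivity of multiplication by $p$) and $\widetilde H^{2}(B\Z/p;G)$ detects its $p$-divisibility (surjectivity of multiplication by $p$).

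With both computations in hand the lemma follows. Specialising to $n=2$, the space $K(G,2)$ is $B\Z/p$-null precisely when $\widetilde H^{1}=\widetilde H^{2}=0$, that is, when multiplication by $p$ on $G$ is simultaneously injective and surjective, i.e.\ $p$ is invertible in $G$. Conversely, once $p$ is invertible in $G$ every group $\{g\in G:pg=0\}$ and $G/pG$ is trivial, so $\widetilde H^{\,j}(B\Z/p;G)=0$ for all $j\ge 1$, and the criterion of the first paragraph shows $K(G,n)$ is $B\Z/p$-null for every $n\ge 1$. This yields both directions, the reverse implication of the statement being immediate upon taking $n=2$.

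The computation is essentially mechanical, so the only point requiring genuine care is the first step: the identification of $\pi_i\,\map_*(B\Z/p, K(G,n))$ with reduced cohomology. I would make this rigorous by invoking the loop space structure on $K(G,n)$ and the exponential law for well-pointed $CW$ sources, being careful to track the range $1\le j\le n$ (so that the equivalence for $K(G,2)$ sees only degrees $1$ and $2$, whereas invertibility of $p$ forces vanishing in all degrees and hence nullity for every $n$). Note that no appeal to Miller's theorem is needed here, since nullity is read off directly from the homotopy groups of the mapping space rather than from any unstable Ext computation.
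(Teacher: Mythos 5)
Your proof is correct, but it follows a genuinely different route from the paper's. You reduce everything to the single identification $\pi_i\,\map_*(B\Z/p,K(G,n))\cong[\Sigma^i B\Z/p,K(G,n)]_*\cong\widetilde H^{\,n-i}(B\Z/p;G)$, so that $B\Z/p$-nullity of $K(G,n)$ becomes the vanishing of $\widetilde H^{\,j}(B\Z/p;G)$ for $1\le j\le n$, and then you read off $\widetilde H^{1}\cong\Hom(\Z/p,G)$ and $\widetilde H^{2}\cong G/pG$ from the universal coefficient theorem; invertibility of $p$ in $G$ is exactly the simultaneous vanishing of these two groups, and it kills all higher degrees as well. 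The paper instead argues in two more geometric steps: it loops down to see that $K(G,1)$ null forces $\Hom(\Z/p,G)=0$, then feeds the fibration $K(G,1)\to K(G/pG,1)\to K(G,2)$ into the fibration-preservation property of $P_{B\Z/p}$ (\cite[3.D.3]{Dror-Farjoun95}) to conclude $B(G/pG)$ is null and hence $G/pG=0$; the final vanishing of $\widetilde H^*(B\Z/p;G)$ is obtained by a transfer argument rather than by universal coefficients. Your version is more elementary and self-contained (no appeal to nullification preserving fibrations, no transfer), at the cost of being special to Eilenberg--MacLane targets; the paper's fibration argument is the pattern it reuses elsewhere for non-Eilenberg--MacLane spaces. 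The one point you rightly flag as needing care --- identifying homotopy groups of all components of the mapping space with $[\Sigma^i B\Z/p,K(G,n)]_*$ based at the constant map --- is harmless here because $\map_*(B\Z/p,K(G,n))\simeq\Omega\map_*(B\Z/p,K(G,n+1))$ is group-like, so all components are equivalent.
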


\begin{proof}
We only need to show that if $K(G,2)$ is $B\Z/p$-null then $p$ is invertible in $G$ and $K(G,n)$ is $B\Z/p$-null for all $n\geq 1$. Since $K(G,1)\simeq \Omega K(G,2)$ is $B\Z/p$-null, $Hom(\Z/p,G)=[B\Z/p,BG]_*=0$. Therefore $G$ has no elements of order $p$. Then, multiplication by $p$ gives rise to a short exact sequence $0\rightarrow G\stackrel{p}{\rightarrow} G\rightarrow G/pG\rightarrow 0$.  Now consider the induced fibration $K(G,1)\rightarrow K(G/pG,1)\rightarrow K(G,2)$. Since both $K(G,1)$ and $K(G,2)$ are $B\Z/p$-null, by \cite[3.D.3]{Dror-Farjoun95},  we see that $B(G/pG)$ is also $B\Z/p$-null. Therefore $G/pG$ has no elements of order $p$, so it must be trivial. That is $G\stackrel{p}{\rightarrow}G$ is an isomorphism and $p$ is invertible in $G$.

A standard transfer argument (see e.g. \cite[Prop III.10.1]{MR672956}) shows that $\tilde{H}^*(B\Z/p;G)$ is trivial. In particular, $\map_*(B\Z/p,K(G,n))$ is weakly contractible for all $n\geq 1$.
\end{proof}

We finish this preliminary section by describing a context in which we can obtain information about the homology and homotopy groups of the cellularization.

\begin{lemma}\label{acyclic-cellularization}
If $R$ is a ring of coefficients and $\tilde{H}^*(A;R)=0$, then $\tilde{H}^*(CW_A(X);R)=0$. If $X$ is nilpotent and $R\subset \Q$ then $\pi_i(CW_A(X))\otimes R=0$ for $i>0$.
\end{lemma}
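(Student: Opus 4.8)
The plan is to handle the two assertions by different means: the cohomological vanishing follows formally from the fact that $CW_A(X)$ is \emph{built} from $A$, while the statement about homotopy groups is obtained by feeding the resulting homological vanishing into the localization theory of nilpotent spaces.

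For the first assertion I would argue by a closure property of the cellular construction. By definition $CW_A(X)$ is $A$-cellular, i.e. it belongs to the smallest class of spaces containing $A$ and closed under weak equivalences and pointed homotopy colimits. It therefore suffices to show that the class
$$\mathcal{A}_R=\{\,Y:\tilde H^*(Y;R)=0\,\}$$
contains $A$ and is closed under those operations. Containment is the hypothesis, and closure under weak equivalence is immediate. For closure under pointed homotopy colimits it is enough to treat the generating cases: wedges, where reduced cohomology is the product of the factors; homotopy pushouts, where one uses the Mayer--Vietoris sequence; and mapping telescopes, where the Milnor $\varprojlim^1$ sequence forces $\tilde H^*(\hocolim)=0$ once all the terms and both $\varprojlim$ and $\varprojlim^1$ vanish. (Uniformly, this is the spectral sequence computing the cohomology of a homotopy colimit, whose $E_2$-page is assembled from the groups $\tilde H^*(F(i);R)$ and hence vanishes.) Thus $CW_A(X)\in\mathcal{A}_R$. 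For the second part we shall need the homological analogue $\tilde H_*(CW_A(X);R)=0$; this comes from the same closure argument applied to reduced homology, whose only extra input is $\tilde H_*(A;R)=0$, a point I return to at the end.

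For the second assertion, put $Y=CW_A(X)$. Since $X$ is nilpotent, $Y$ is nilpotent by Lemma~\ref{nilpotent-cellularization}, and by the above $\tilde H_*(Y;R)=0$. Because $R\subseteq\Q$ is flat over $\Z$, the $R$-localization $Y\to Y_R$ of this nilpotent space satisfies $H_*(Y_R;\Z)\cong H_*(Y;\Z)\otimes R\cong H_*(Y;R)$, which vanishes in positive degrees. Hence $Y_R$ is a nilpotent, integrally acyclic space: its fundamental group is nilpotent with trivial abelianization, so it is trivial, and a simply connected space with vanishing reduced integral homology is contractible. Consequently $\pi_i(Y)\otimes R\cong\pi_i(Y_R)=0$ for every $i>0$, as required.

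The localization input (existence of $Y_R$, the identity $H_*(Y_R;\Z)\cong H_*(Y;\Z)\otimes R$, and preservation of nilpotency) is standard and available, so the genuine content sits in the first part. The step I expect to require the most care is precisely the closure of the acyclicity class under pointed homotopy colimits: one must run the (co)homology computation in the reduced, pointed setting so that basepoints and the augmentation do not interfere. The subtlest point is the passage between cohomological and homological acyclicity over $R$ needed to launch the second part; for $R\subseteq\Q$ and the spaces occurring in our applications this is unproblematic, but it is exactly where the coefficient ring genuinely enters, and it must be checked rather than assumed. Once the closure is in place, both statements follow formally.
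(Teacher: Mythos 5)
Your first assertion is correct, but note that your closure-class induction is essentially the paper's one-line argument with the target space made invisible: since $\tilde{H}^*(A;R)=0$, each Eilenberg--MacLane space $K(R,n)$ is $A$-null, and an $A$-cellular space maps trivially into any $A$-null space, so $\tilde{H}^n(CW_A(X);R)=\pi_0\map_*(CW_A(X),K(R,n))=0$. Packaging the induction this way (the class of $W$ with $\map_*(W,Z)\simeq *$ for fixed $A$-null $Z$ is closed under pointed homotopy colimits) is preferable to running it in cohomology directly, because it converts the problematic step --- convergence of the cohomology spectral sequence of an arbitrary, possibly transfinite, pointed homotopy colimit --- into the formal statement that a homotopy limit of contractible spaces is contractible.

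The second assertion, however, has a genuine gap which you flag but do not close. Your route needs $\tilde{H}_*(CW_A(X);R)=0$, and the homological closure argument needs $\tilde{H}_*(A;R)=0$ as input; the hypothesis of the lemma is only cohomological. The implication $\tilde{H}^*(A;R)=0\Rightarrow\tilde{H}_*(A;R)=0$ is not formal: by universal coefficients (and flatness of $R\subseteq\Q$) it amounts to the algebraic claim that $\Hom(G,R)=\operatorname{Ext}^1_{\Z}(G,R)=0$ forces $G\otimes R=0$ for an arbitrary abelian group $G$. This is immediate for $R=\Q$, but for a general subring $R=\Z[J^{-1}]$ it requires a real argument about how $\Hom$ and $\operatorname{Ext}$ into $R$ detect $-\otimes R$ for non--finitely generated groups, and you supply none. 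The paper avoids the issue entirely by staying in cohomology: the contractibility of $\map_*(CW_A(X),K(R,n))$, together with nilpotency of $CW_A(X)$ from Lemma~\ref{nilpotent-cellularization}, feeds directly into \cite[V.3.1]{BK}, which returns $\pi_i(CW_A(X))\otimes R=0$ without ever passing to homology. Your localization argument (integral acyclicity of the nilpotent space $Y_R$ forces contractibility, whence $\pi_i(Y)\otimes R\cong\pi_i(Y_R)=0$) is fine once homological acyclicity is in hand; so either prove the $\Hom$/$\operatorname{Ext}$ implication above, or replace that leg of the argument by the cited Bousfield--Kan result applied to the cohomological vanishing you already have.
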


\begin{proof}
Under the hypothesis of the theorem, $K(R,n)$ is $A$-local for $n>0$, then the space $\map_*(CW_A(X), K(R,n))$ is weakly contractible. By Lemma \ref{nilpotent-cellularization}, we can apply \cite[V.3.1]{BK}.
\end{proof}

\section{$B\Z/p$-homotopy and  $p$-completion}

 We devote this section to the description of the behaviour of the functors $CW_A$ and $P_A$ with respect to the $p$-completion functor of Bousfield and Kan. In particular, if $\eta\colon X\rightarrow X^{\wedge}_p$ is the $p$-completion, we want to characterize when the maps $CW_A(\eta)$ and $P_A(\eta)$ are mod $p$ equivalences. This will be fundamental in our approach to the $B\Z /p$-nullification and $B\Z /p$-cellularization of classifying spaces, which will be undertaken in the last two sections and is the main goal of our note.

A first approximation to these kind of questions appears in the work of  Miller in the solution of the Sullivan Conjecture, which implies immediately a statement about $B\Z /p$-nullity.

\begin{teo}\cite[Thm 1.5]{Miller} \label{bastaenp-Miller}
Let $W$ be a connected space with $\tilde{H}^*(W;\Z[\frac{1}{p}])=0$ and let $X$ be a nilpotent space. Then $\eta\colon X\rightarrow X^{\wedge}_p$ is a $W$-equivalence.
\end{teo}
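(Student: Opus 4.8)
The plan is to reduce the statement to Eilenberg--MacLane spaces and then to climb the Postnikov tower. Since $X$ is nilpotent, it admits a principal refinement of its Postnikov tower $\{X_n\}$ with $X\simeq \holim_n X_n$ and with fibres of the form $K(A,m)$ for $A$ abelian; by \cite{BK} the $p$-completion respects this tower, so that $X^\wedge_p\simeq \holim_n (X_n)^\wedge_p$ and the coaugmentation $\eta$ is compatible with the two towers stage by stage. Because $\map_*(W,-)$ sends homotopy fibre sequences to fibre sequences and commutes with homotopy inverse limits, it then suffices to show that each stage map $X_n\to (X_n)^\wedge_p$ is a $W$-equivalence and afterwards to pass to the limit.

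First I would settle the Eilenberg--MacLane case. For a $K(A,m)$ one has $\pi_i\map_*(W,K(A,m))\cong \tilde H^{m-i}(W;A)$, so that a $W$-equivalence between two such mapping spaces is exactly an isomorphism on the cohomology of $W$ with the respective coefficients. Up to the usual $\Hom(\Z/p^\infty,A)$ correction, which is absent for the coefficient groups occurring in the relevant towers, the completion of $K(A,m)$ is $K(A^\wedge_p,m)$, and the kernel and cokernel of the coefficient map $A\to A^\wedge_p$ are uniquely $p$-divisible, i.e.\ $\Z[1/p]$-modules. The hypothesis $\tilde H^*(W;\Z[1/p])=0$, together with the universal coefficient exact sequence, forces $\tilde H_*(W;\Z)$ to be $p$-power torsion, hence $\tilde H^*(W;N)=0$ for every $\Z[1/p]$-module $N$; feeding this into the four-term coefficient sequence attached to $A\to A^\wedge_p$ shows that $\tilde H^*(W;A)\to \tilde H^*(W;A^\wedge_p)$ is an isomorphism, which is precisely the Eilenberg--MacLane case.

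With the base case in hand, I would induct over the stages. Each $X_{n+1}$ sits in a principal fibre sequence $X_{n+1}\to X_n\to K(A,m+1)$ determined by a $k$-invariant, and $\eta$ carries this to the corresponding $p$-completed fibre sequence. Applying $\map_*(W,-)$ to both and comparing the two long exact sequences of homotopy groups via the five lemma, the Eilenberg--MacLane case (for the classifying map $K(A,m+1)$) and the inductive hypothesis (for $X_n$) force the map on $\map_*(W,X_{n+1})$ to be a weak equivalence, i.e.\ $X_{n+1}\to (X_{n+1})^\wedge_p$ is again a $W$-equivalence.

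The main obstacle is exactly the final passage to the limit. Although $\map_*(W,-)$ commutes with $\holim$, the levelwise weak equivalences only assemble to the desired equivalence after one controls the $\lim^1$ terms of the towers $\{\pi_i\map_*(W,X_n)\}_n$; this convergence question is the genuinely hard part of Miller's theorem, and it is where one must invoke finiteness properties of $W$ and the unstable-module machinery (Lannes' theory and the vanishing results for $\map_*(B\Z/p,-)$) rather than the formal fibration arguments used above. A secondary technical point is to justify the clean identification $K(A,m)^\wedge_p\simeq K(A^\wedge_p,m)$ for the possibly non-finitely generated coefficient groups that occur, handling any $p$-divisible contributions separately.
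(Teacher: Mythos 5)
The paper offers no proof of this statement: it is quoted verbatim from \cite[Thm 1.5]{Miller} and used as a black box, so your attempt can only be measured against the standard argument. Your skeleton --- principal refinement of the Postnikov tower, Eilenberg--MacLane base case, five lemma up the tower, passage to the homotopy inverse limit --- is indeed that argument. But two of your steps are not right as written. First, the Eilenberg--MacLane case: the claim that the $\Hom(\Z/p^\infty,A)$ correction ``is absent for the coefficient groups occurring in the relevant towers'' is false. A nilpotent space may well have divisible $p$-torsion in its homotopy; the paper's own Example~\ref{no1connected}, where $K(\Z/p^\infty,2)^\wedge_p\simeq K(\Z^\wedge_p,3)$, is exactly an instance in which the correction term carries all of the content. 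The way to close this is not to show that $\tilde{H}^*(W;A)\to\tilde{H}^*(W;A^\wedge_p)$ is an isomorphism, but to identify the homotopy fibre of $K(A,m)\to K(A,m)^\wedge_p$: it is a generalized Eilenberg--MacLane space whose homotopy groups are $\Hom(\Z[\frac{1}{p}],A)$ and $\mathrm{Ext}(\Z[\frac{1}{p}],A)$ (from the six-term sequence attached to $0\to\Z\to\Z[\frac{1}{p}]\to\Z/p^\infty\to0$), and these are $\Z[\frac{1}{p}]$-modules. Since $\tilde{H}_*(W;\Z)$ is $p$-power torsion, $\tilde{H}^*(W;N)=0$ for every $\Z[\frac{1}{p}]$-module $N$ (the $\mathrm{Ext}$ term in universal coefficients also vanishes, because such $N$ admits an injective resolution by $\Z[\frac{1}{p}]$-modules), so $\map_*(W,-)$ kills the fibre and its delooping, and principality of the fibration gives the base case with no hypothesis on $A$.

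Second, your closing paragraph both invents an obstacle and misattributes the difficulty. A levelwise weak equivalence between towers of fibrations induces a weak equivalence on $\holim$ with no $\lim^1$ control whatsoever; $\lim^1$ would matter only if you needed to compute $\pi_i$ of the limit, which you do not. The one genuine convergence point is that $X^\wedge_p\to\holim_n (X_n)^\wedge_p$ is an equivalence, and this holds because $X\to X_n$ is highly connected and $p$-completion of nilpotent spaces preserves high connectivity, so the towers $\{\pi_i((X_n)^\wedge_p)\}_n$ are eventually constant. More importantly, the assertion that one must invoke Lannes theory and the vanishing results for $\map_*(B\Z/p,-)$ is backwards: this theorem is the \emph{formal} reduction that Miller establishes precisely in order to replace a nilpotent target by its $p$-completion before the hard unstable-module arguments begin; no finiteness of $W$ and no Sullivan-conjecture machinery enter its proof. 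With the Eilenberg--MacLane case repaired as above, your induction and limit argument close on their own.
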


\begin{cor}
\label{bastaenp}
If $X$ is a nilpotent space, the $p$-completion $\eta \colon X\rightarrow X^{\wedge}_p$ is a
$\emph{B}\Z /p$-equivalence.
\end{cor}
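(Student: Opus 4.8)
The plan is to obtain this as an immediate specialization of Miller's Theorem~\ref{bastaenp-Miller}, taking $W=B\Z/p$. That theorem asserts that $\eta\colon X\to X^\wedge_p$ is a $W$-equivalence for every nilpotent $X$ as soon as $W$ is connected and satisfies $\tilde{H}^*(W;\Z[\frac{1}{p}])=0$. Since $B\Z/p$ is certainly connected, the entire problem collapses to verifying this single cohomological hypothesis for $W=B\Z/p$; once that is in place, the statement about $\eta$ being a $B\Z/p$-equivalence is handed to us directly by the theorem applied to the given nilpotent $X$.

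First I would verify that $\tilde{H}^*(B\Z/p;\Z[\frac{1}{p}])=0$. Because $B\Z/p=K(\Z/p,1)$ is the classifying space of a finite group of order $p$, the standard transfer argument — the very one already invoked in the proof of Lemma~\ref{nullK(G,n)} — shows that $\tilde{H}^n(B\Z/p;M)$ is annihilated by $|\Z/p|=p$ for all $n>0$ and any coefficient module $M$. Taking $M=\Z[\frac{1}{p}]$ with trivial action, multiplication by $p$ is simultaneously the zero map and an isomorphism on these groups, which forces $\tilde{H}^n(B\Z/p;\Z[\frac{1}{p}])=0$ in every positive degree.

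With the hypothesis confirmed, I would simply invoke Theorem~\ref{bastaenp-Miller} with $W=B\Z/p$ and conclude that $\eta\colon X\to X^\wedge_p$ is a $B\Z/p$-equivalence. I do not expect any genuine obstacle: the substantive content of the corollary resides entirely in Miller's theorem, and the only computation required is the transfer argument above, which is routine. The one point meriting a moment's attention is ensuring that the coefficients $\Z[\frac{1}{p}]$ carry the trivial $\Z/p$-action, so that the transfer argument applies verbatim and the vanishing really does hold with these coefficients.
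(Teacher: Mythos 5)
Your proof is correct and is exactly the argument the paper intends: the corollary is stated as an immediate specialization of Theorem~\ref{bastaenp-Miller} to $W=B\Z/p$, with the only content being the standard transfer computation that $\tilde{H}^*(B\Z/p;\Z[\frac{1}{p}])=0$. The paper does not even spell this out, so your write-up supplies precisely the routine verification it leaves implicit.
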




Observe that if $X$ is $1$-connected, we can $p$-complete our target space, if necessary, before computing $CW_{B\Z /p}X$. This statement, and the fact that the $B\Z /p$-cellularization is constructed using copies of $B\Z /p$ as pieces, may lead to think that $CW_{B\Z /p}X$ is always a $p$-complete space. Next lemma shows that this is true in certain cases but, as we will see in Example \ref{no1connected}, not always.

\begin{lemma}\label{p-complete-cellularization}
If $X$ is a nilpotent space, then $CW_{B\Z/p}(X)$ is $p$-complete if and only if $\tilde{H}_*(CW_{B\Z/p}(X)^{\wedge}_p;\Q)=0$.
\end{lemma}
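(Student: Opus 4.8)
The plan is to reduce everything to the fracture (homotopy pullback) square for $W:=CW_{B\Z/p}(X)$, and then to read off the stated criterion from the rationalization of its $p$-completion. The whole argument hinges on first identifying the arithmetic nature of $W$.

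First I would record the two structural facts I need about $W$. By Lemma~\ref{nilpotent-cellularization} the space $W$ is nilpotent, and hence so is its $p$-completion $W^{\wedge}_p$ (a standard property of Bousfield--Kan completion, \cite{BK}). Next, applying the second assertion of Lemma~\ref{acyclic-cellularization} with $R=\Z[\tfrac1p]\subset\Q$ gives $\pi_i(W)\otimes\Z[\tfrac1p]=0$ for all $i>0$, so every homotopy group of $W$ is $p$-torsion. Two consequences will be used: $W$ is $p$-local, and $W$ is rationally trivial, so that $W_\Q\simeq *$.

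The main tool is then the fracture square for the nilpotent, $p$-local space $W$, relating $W$, its $p$-completion $W^{\wedge}_p$, its rationalization $W_\Q$, and the rationalization $(W^{\wedge}_p)_\Q$ of the completion \cite{BK}. Since $W_\Q\simeq *$, this square degenerates to a fibre sequence and identifies the completion map as $W\simeq \hofib\bigl(W^{\wedge}_p\to (W^{\wedge}_p)_\Q\bigr)$. From here both implications are immediate. If $W$ is $p$-complete then $W\simeq W^{\wedge}_p$, whence $\tilde H_*(W^{\wedge}_p;\Q)=\tilde H_*(W;\Q)=0$, the last equality because $W$ is rationally trivial. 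Conversely, if $\tilde H_*(W^{\wedge}_p;\Q)=0$ then the nilpotent space $W^{\wedge}_p$ is rationally trivial, so $(W^{\wedge}_p)_\Q\simeq *$ and the fibre sequence above collapses to $W\simeq W^{\wedge}_p$; that is, $W$ is $p$-complete.

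I expect the only delicate point to be the validity of the fracture square, since $W=CW_{B\Z/p}(X)$ need not be of finite type. This is exactly where the $p$-locality of $W$ is essential: only the prime $p$ and the rational part survive, so the square involves nothing but $W^{\wedge}_p$, $W_\Q$ and $(W^{\wedge}_p)_\Q$, and one can appeal to the nilpotent fracture lemmas of \cite{BK}. It is also worth noting the asymmetry underlying the criterion: the reduced $\F_\ell$-homology of $W^{\wedge}_p$ vanishes automatically for every $\ell\neq p$ (its homotopy groups are Ext-$p$-complete, so multiplication by $\ell$ is invertible on them), and the reduced mod $p$ homology of $W$ and $W^{\wedge}_p$ agree; thus the rational homology $\tilde H_*(W^{\wedge}_p;\Q)$, equivalently the space $(W^{\wedge}_p)_\Q$, is the sole obstruction to $p$-completeness.
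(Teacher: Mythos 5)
Your proof is correct and follows essentially the same route as the paper: both reduce to the Sullivan arithmetic square for the nilpotent space $W=CW_{B\Z/p}(X)$, observe that the rational and $q$-adic ($q\neq p$) corners vanish because $B\Z/p$ is acyclic for those coefficients (Lemmas~\ref{nilpotent-cellularization} and \ref{acyclic-cellularization}), and conclude that $W\simeq\hofib\bigl(W^{\wedge}_p\to (W^{\wedge}_p)_\Q\bigr)$, from which the stated criterion is immediate. The only cosmetic difference is that you derive the degeneration from $\pi_i(W)\otimes\Z[\tfrac1p]=0$ (the second assertion of Lemma~\ref{acyclic-cellularization}) rather than directly from the $\Q$- and $\F_q$-acyclicity of $W$, which is what the paper cites.
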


\begin{proof}
 Since $B\Z /p$ is both $\mathbb{Q}$-acyclic and $\mathbb{F}_q$-acyclic for $q\neq p$, $CW_{B\Z/p}(X)$ is so (\cite[D.2.5]{Dror-Farjoun95} or Lemma \ref{acyclic-cellularization}), and then the rationalization and $q$-completions of $CW_{B\Z/p}(X)$ are trivial. By Lemma \ref{nilpotent-cellularization}, $CW_{B\Z/p}(X)$ is also a nilpotent space, so it admits a Sullivan arithmetic square decomposition. The result follows.
\end{proof}


\begin{ejem}
\label{no1connected}
Consider the space $X=K(\Z/p^\infty, 2)$. $X$ is $B\Z/p$-cellular by \cite[Lemma 3.3]{CCS2}, but it is not $p$-complete since $X^\wedge_p\simeq K(\Z^\wedge_p, 3)$ and $\tilde{H}^*(X^\wedge_p;\Q)\neq 0$. In fact, the $p$-completion $\eta\colon K(\Z/p^\infty, 2)\rightarrow K(\Z^\wedge_p, 3)$ induces a $B\Z/p$-cellular equivalence, then $CW_{B\Z/p}(X^\wedge_p)\simeq X$. On the other hand, taking for example $p=2$ and $X=B\Sigma_3$, the
classifying space of the symmetric group in three letters, it is not nilpotent, and the cellularization is not complete. See \cite[Example 2.6]{FS07} for details.
\end{ejem}




We proceed now to a systematic study of the induced
map $CW_{B\Z/p}(\eta)\colon CW_{B\Z/p}(X)\rightarrow CW_{B\Z/p}(X^{\wedge}_p)$. We want to show under which conditions it becomes a mod $p$ equivalence. The first step is a reduction concerning the fundamental group, for which we need the following definition.

\begin{defo}
We say that an element $x\in \pi_1(X)$ lifts to $X$ if there exists a homotopy lift
$$
\xymatrix{ & X \ar[d] \\ \emph{B}(\langle x \rangle) \ar[ur] \ar^{i_{\langle x \rangle}}[r] & \emph{B}(\pi_1(X)).}
$$

\end{defo}

\begin{prop}\label{liftin}
Let X be a connected space. There is a fibration $$CW_{B\Z/p}(X)\rightarrow X \rightarrow Z$$ with $\pi_1(Z)\cong \pi_1(X)/S$, where $S$ is the normal subgroup generated by the elements of order $p$ which lift to $X$.
\end{prop}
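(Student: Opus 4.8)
The plan is to obtain the fibration directly from Chach\'olski's theorem and then compute the fundamental group of its base in two moves: a van Kampen computation and a fundamental-group-preservation statement for the nullification. Concretely, I would apply Theorem~\ref{Chacho} with $A=B\Z/p$. Writing $C$ for the homotopy cofibre of the evaluation $e\colon \bigvee_{[B\Z/p,X]_*}B\Z/p\ra X$, this yields the fibration $CW_{B\Z/p}(X)\ra X\ra P_{\Sigma B\Z/p}(C)$, so I would set $Z:=P_{\Sigma B\Z/p}(C)$ and be left only with identifying $\pi_1(Z)$.

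Next I would compute $\pi_1(C)$ by van Kampen applied to the mapping cone $C=X\cup_e\mathrm{Cone}(\bigvee B\Z/p)$: since the cone is contractible and $\bigvee_{[B\Z/p,X]_*}B\Z/p$ is connected, one gets $\pi_1(C)\cong\pi_1(X)/N$, where $N$ is the normal closure of the image of $e_*\colon \pi_1(\bigvee B\Z/p)=\ast_{[B\Z/p,X]_*}\Z/p\ra\pi_1(X)$. This image is the subgroup generated by the elements $f_*(1)$ as $f$ ranges over $[B\Z/p,X]_*$, each of order dividing $p$. The crux here is to match this set with the lifting elements of the Definition: unwinding that definition, an order-$p$ element $x$ lifts to $X$ exactly when there is a pointed map $g\colon B\langle x\rangle=B\Z/p\ra X$ with $g_*(1)=x$, using that $X\ra B\pi_1(X)$ is an isomorphism on $\pi_1$ and that maps into the aspherical space $B\pi_1(X)$ are detected on $\pi_1$. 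Hence $\mathrm{im}(e_*)$ is generated by the order-$p$ elements that lift to $X$ (the contributions with $f_*(1)=1$ are trivial), its normal closure is $S$, and $\pi_1(C)\cong\pi_1(X)/S$.

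Finally I would show that the coaugmentation $C\ra Z=P_{\Sigma B\Z/p}(C)$ is an isomorphism on $\pi_1$, which finishes the proof. Surjectivity holds for nullification with respect to any connected space, since each coning-off step in the construction of $P_{\Sigma B\Z/p}$ only passes to a quotient of $\pi_1$. For injectivity I would use that $W:=\Sigma B\Z/p$ is simply connected, so that every $K(G,1)$ is $W$-null: for $k\ge 0$ the space $\Sigma^k W$ is simply connected, whence $[\Sigma^k W,K(G,1)]_*=\ast$ by the lifting criterion and $\map_*(W,K(G,1))$ is weakly contractible. Thus $K(\pi_1 C,1)$ is $W$-null and the canonical map $C\ra K(\pi_1 C,1)$ factors through $Z$, exhibiting $\pi_1(C)\ra\pi_1(Z)$ as a split monomorphism; together with surjectivity this gives the desired isomorphism, so $\pi_1(Z)\cong\pi_1(X)/S$.

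I expect the main obstacle to be this last step, namely controlling $\pi_1$ under $P_{\Sigma B\Z/p}$. The injectivity half is clean once one knows $K(G,1)$ is $W$-null, but the surjectivity half rests on the behaviour of the functorial construction of nullification on fundamental groups (or on an external citation), and one must be careful that the simple-connectivity of $\Sigma B\Z/p$ is precisely what prevents the nullification from altering $\pi_1$, in contrast to the quotient that coning off $B\Z/p$ itself already produces when forming $C$.
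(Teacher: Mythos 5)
Your argument is correct, but it follows a genuinely different route from the paper's. Both proofs take $Z=P_{\Sigma B\Z/p}(C)$ from Chach\'olski's fibration (Theorem~\ref{Chacho}); the paper, however, does not compute $\pi_1(C)$ directly. Instead it passes to the covering $E\ra X$ with $\pi_1(E)=S$, quotes \cite[Prop.~2.1]{CCS2} to see that $E\ra X$ is a $B\Z/p$-cellular equivalence and that $S$ is generated by order-$p$ elements lifting to $E$, deduces that the Chach\'olski cofibre of $E$ is $1$-connected (hence so is its $\Sigma B\Z/p$-nullification, by Lemma~\ref{1-connectednullification}), and then reads off $\pi_1(Z)\cong\pi_1(X)/S$ by comparing the two fibrations; this covering-space device is recycled in the proof of Lemma~\ref{fundamentalgroupofCW}. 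You instead identify $\pi_1(C)\cong\pi_1(X)/S$ by van Kampen --- including the correct translation between pointed maps $B\Z/p\ra X$ and order-$p$ elements that lift, where basepoint ambiguities only move generators within a conjugacy class and so do not affect the normal closure --- and then show $C\ra P_{\Sigma B\Z/p}(C)$ is a $\pi_1$-isomorphism. Your injectivity step is clean: $\Sigma^{k+1}B\Z/p$ is simply connected, so every $K(G,1)$ is $\Sigma B\Z/p$-null and $C\ra K(\pi_1C,1)$ factors through $Z$, splitting $\pi_1(C)\ra\pi_1(Z)$. For surjectivity you need not invoke the internal construction of the nullification: $\pi_1(X)\ra\pi_1(Z)$ is already onto from the long exact sequence of Chach\'olski's fibration (its fibre $CW_{B\Z/p}(X)$ is connected), and this map factors through the van Kampen surjection $\pi_1(X)\ra\pi_1(C)$, so $\pi_1(C)\ra\pi_1(Z)$ is onto for free. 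With that small patch your proof is entirely self-contained and avoids the external input from \cite{CCS2}, whereas the paper's route sets up the auxiliary fibration over $BS$ that it exploits again later.
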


\begin{proof}
The fibration in the proposition is the one constructed by Chach\'{o}lski (see Theorem \ref{Chacho}) where $Z=P_{\Sigma B\Z/p}(C_X)$.
The subgroup $S$ is constructed in \cite[Prop. 2.1]{CCS2} in a way that $E\rightarrow X$ is a $B\Z/p$-cellular equivalence, where $E$ is the homotopy pullback
$$
\xymatrix{
E \ar[r]^{} \ar[d]^{} &
X  \ar[d]^{ }  \\
BS\ar[r]^{Bi} & B(\pi_1(X)).
}$$
By construction $\pi_1(E)\cong S$ is generated by elements of order $p$ which lift to $E$. Then the Chach\'{o}lski's cofibre $C_E$ (see Theorem \ref{Chacho}) is $1$-connected and $P_{\Sigma B\Z/p}(C_E)$ is too by Lemma \ref{1-connectednullification}. Since $E\rightarrow X$ is a $B\Z/p$-equivalence, from the following diagram of fibrations
$$
\xymatrix{
CW_{B\Z/p}E \ar[r]^{\simeq} \ar[d]^{} &
CW_{B\Z/p}X  \ar[d]^{ }  \\
E \ar[r]^{} \ar[d]^{} &
X  \ar[d]^{ }  \\
P_{\Sigma B\Z/p}(C_E)\ar[r]^{ } & P_{\Sigma B\Z/p}(C).
}$$
where $C$ is Chach\'{o}lski's cofibre for $X$, we see that the fundamental group of $P_{\Sigma B\Z/p}(C)$ is $\pi_1(X)/S$.
\end{proof}

\begin{cor}\label{unpointed}
Let $X$ be a connected space such that $\pi_1(X)$ is generated by elements of order $p$ which lift to $X$. There is a bijection $[B\Z/p,CW_{B\Z/p}(X)]\cong[B\Z/p,X]$ between unpointed homotopy classes of maps.
\end{cor}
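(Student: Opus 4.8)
The plan is to deduce the unpointed bijection from the pointed one by keeping track of the action of the fundamental group of the target. Write $W = CW_{B\Z/p}(X)$ and let $c\colon W \to X$ be the augmentation. By the defining property of cellularization, $c$ is a $B\Z/p$-equivalence, so it induces a weak equivalence $\map_*(B\Z/p, W) \to \map_*(B\Z/p, X)$ and in particular a bijection $\phi\colon [B\Z/p, W]_* \to [B\Z/p, X]_*$ on \emph{pointed} homotopy classes. Since $B\Z/p$ is connected, the free homotopy classes are the orbits of the pointed classes under the natural action of the fundamental group of the target; thus $[B\Z/p, W] = [B\Z/p, W]_*/\pi_1(W)$ and $[B\Z/p, X] = [B\Z/p, X]_*/\pi_1(X)$, and $\phi$ is equivariant along $c_*\colon \pi_1(W) \to \pi_1(X)$, since post-composing a twisting homotopy with $c$ carries the loop class $g$ to $c_*(g)$. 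So it will suffice to show that $\phi$ descends to a bijection on orbit sets.

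The key input is that $c_*$ is surjective, and this is exactly where the hypothesis enters. I would apply Proposition \ref{liftin}: the augmentation fits in a fibration $W \to X \to Z$ with $Z = P_{\Sigma B\Z/p}(C)$ and $\pi_1(Z) \cong \pi_1(X)/S$, where $S$ is the normal subgroup generated by the order-$p$ elements that lift to $X$. By assumption $\pi_1(X)$ is generated by such elements, so $S = \pi_1(X)$ and $Z$ is simply connected. The segment $\pi_1(W) \xrightarrow{c_*} \pi_1(X) \to \pi_1(Z) = 1$ of the long exact sequence of the fibration then shows that $c_*$ is onto.

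It remains to combine the two facts. Given the equivariant bijection $\phi$ together with surjectivity of $c_*$, the induced map $\bar\phi\colon [B\Z/p,W]_*/\pi_1(W) \to [B\Z/p,X]_*/\pi_1(X)$ is a bijection. Surjectivity of $\bar\phi$ is immediate from surjectivity of $\phi$. For injectivity, if $\phi(\alpha)$ and $\phi(\beta)$ lie in the same $\pi_1(X)$-orbit, say $\phi(\beta) = h\cdot\phi(\alpha)$, then surjectivity of $c_*$ lets me write $h = c_*(g)$, and equivariance gives $\phi(\beta) = c_*(g)\cdot\phi(\alpha) = \phi(g\cdot\alpha)$, whence $\beta = g\cdot\alpha$ by injectivity of $\phi$. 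This yields the desired bijection $[B\Z/p, CW_{B\Z/p}(X)] \cong [B\Z/p, X]$.

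The subtle point, and the only place the argument can fail, is the surjectivity of $c_*$: the pointed classes always agree because $c$ is a $B\Z/p$-equivalence, but the orbit sets can genuinely differ if the $\pi_1$-actions are not linked by a surjection. It is the lifting hypothesis, via Proposition \ref{liftin} forcing $S = \pi_1(X)$ and hence $\pi_1(Z) = 1$, that supplies this surjection. I would also take care to record that $W$ (and $Z$) are connected, so that both the orbit-set description of free homotopy classes and the relevant portion of the exact sequence apply.
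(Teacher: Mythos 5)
Your proposal is correct and follows essentially the same route as the paper: the pointed bijection comes from the augmentation being a $B\Z/p$-equivalence, the unpointed classes are identified with orbits under the $\pi_1$-action, and the surjectivity of $\pi_1(CW_{B\Z/p}(X))\to\pi_1(X)$ supplied by Proposition~\ref{liftin} is what makes the bijection descend to orbit sets. You simply spell out the equivariance and descent argument that the paper leaves implicit.
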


\begin{proof}
Since $CW_{B\Z/p}(X)\rightarrow X$ is a $B\Z/p$-homotopy equivalence, there is a bijection $[B\Z/p,CW_{B\Z/p}(X)]_*\cong[B\Z/p,X]_*$ between pointed homotopy classes of maps. The following diagram
$$
\xymatrix{
 [B\Z/p,CW_{B\Z/p}(X)]_*\ar[r]^{} \ar[d]^{} &
[B\Z/p,X]_*  \ar[d]^{ }  \\
[B\Z/p,CW_{B\Z/p}(X)]\ar[r] & [B\Z/p,X]
}$$
shows that the quotient map is also a bijection since the induced morphism on fundamental groups $\pi_1(CW_{B\Z/p}(X))\rightarrow \pi_1(X)$ is an epimorphism by Proposition \ref{liftin}.
\end{proof}

We can get information about the fundamental group of the cellularization since being $B\Z/p$-cellular imposes some restrictions on the fundamental group of the space.

\begin{lemma}\label{fundamentalgroupCWlifts}\label{fundamentalgroupofCW}
 If $X$ is a $B\Z/p$-cellular space, its fundamental group is generated by elements of order $p$ which lift to $X$. Moreover, if  $X$ is a finite type $1$-connected  space,then $\pi_1(CW_{B\Z/p}(X))$ is a finitely generated abelian generated by elements of order $p$ which lift to $CW_{B\Z/p}(X)$.
\end{lemma}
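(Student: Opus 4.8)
The plan is to prove the two assertions of Lemma~\ref{fundamentalgroupofCW} separately, since the first is a structural fact about any $B\Z/p$-cellular space while the second requires the finite-type and simple-connectivity hypotheses on $X$.

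For the first assertion, suppose $X$ is $B\Z/p$-cellular. The key is to exploit the closure properties of the class of cellular spaces. First I would recall that $\pi_1$ of any pointed connected space $X$ can be read off from pointed homotopy classes of maps out of $S^1$, but the relevant fact here is that the cellular structure forces $\pi_1(X)$ to be built, via the pointed homotopy colimit operations, from $\pi_1(B\Z/p)=\Z/p$. Concretely, one argues that the subgroup $G\le \pi_1(X)$ generated by the elements of order $p$ that lift to $X$ (in the sense of the Definition preceding Proposition~\ref{liftin}) is all of $\pi_1(X)$: the class of spaces $Y$ whose fundamental group is generated by such liftable order-$p$ elements contains $B\Z/p$ and is closed under pointed homotopy colimits, hence contains every $B\Z/p$-cellular space. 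The cleanest route is to invoke Proposition~\ref{liftin}, which produces a fibration $CW_{B\Z/p}(X)\to X\to Z$ with $\pi_1(Z)\cong\pi_1(X)/S$ where $S$ is exactly the normal subgroup generated by the liftable order-$p$ elements; when $X$ is already $B\Z/p$-cellular the augmentation $CW_{B\Z/p}(X)\to X$ is an equivalence, so $Z$ is $B\Z/p$-null and $B\Z/p$-cellular simultaneously, forcing $Z\simeq *$ and hence $S=\pi_1(X)$.

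For the second assertion, take $X$ to be $1$-connected of finite type and set $W=CW_{B\Z/p}(X)$. By the first part applied to $W$ (which is $B\Z/p$-cellular), $\pi_1(W)$ is generated by liftable elements of order $p$; the content to be added is that $\pi_1(W)$ is \emph{finitely generated and abelian}. For finite generation I would use that $X$ is of finite type together with Chach\'olski's fibration $W\to X\to P_{\Sigma B\Z/p}(C)$ from Theorem~\ref{Chacho}: since $X$ is $1$-connected, $\pi_1(W)\cong\pi_2(P_{\Sigma B\Z/p}(C))$ via the long exact sequence, and $C$, being the cofibre of a wedge of copies of $B\Z/p$ mapping to a $1$-connected finite-type space, is $1$-connected of finite type, so $\pi_2$ of its nullification is finitely generated. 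For commutativity, the decisive input is that every generator has order $p$: a group generated by torsion elements of order $p$ that arises as $\pi_1$ of a $B\Z/p$-cellular space of this form is abelian because the liftability condition lets one compare with maps from $B\Z/p$, and together with finite generation this yields a finite abelian $p$-group, which combined with the first part gives the stated description.

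The main obstacle I expect is the commutativity claim for $\pi_1(W)$ in the second part: knowing the group is generated by order-$p$ elements does not by itself force it to be abelian, so the argument must genuinely use the homotopy-theoretic structure — specifically that $\pi_1(W)\cong\pi_2(P_{\Sigma B\Z/p}(C))$ is the fundamental group at the level of a \emph{$1$-connected} nullification target, where $\pi_2$ is automatically abelian. The care needed is to verify that the identification of $\pi_1(W)$ with a second homotopy group is legitimate (via Theorem~\ref{Chacho} and Lemma~\ref{1-connectednullification}, which guarantees $P_{\Sigma B\Z/p}(C)$ is $1$-connected so that its $\pi_2$ is abelian), and then to transport the liftable-generators description from the first part through this identification.
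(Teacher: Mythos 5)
Your proposal is correct and follows essentially the same route as the paper: the first assertion amounts to Proposition~\ref{liftin} specialized to a cellular $X$ (the paper re-runs the covering--space argument with the pullback $E\to BS$ rather than quoting that proposition, but it is the same circle of ideas), and the second assertion is proved exactly as in the paper, by identifying $\pi_1(CW_{B\Z/p}(X))$ with $\pi_2(P_{\Sigma B\Z/p}(C))$ via the long exact sequence of Chach\'olski's fibration, which is abelian because $P_{\Sigma B\Z/p}(C)$ is $1$-connected by Lemma~\ref{1-connectednullification}, and finitely generated by a finite-type argument. One correction: your reason for $Z\simeq *$ in the first part --- that $Z$ is simultaneously $B\Z/p$-null and $B\Z/p$-cellular --- is not justified, since nothing forces $Z=P_{\Sigma B\Z/p}(C)$ to be cellular; the conclusion is nevertheless immediate, because in a fibration $F\to X\to Z$ over a connected base whose fibre inclusion is a weak equivalence, the long exact sequence of homotopy groups gives $\pi_n(Z)=0$ for all $n\geq 1$. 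Also note that the paper does not claim $C$ itself has finite type (which would require the indexing set $[B\Z/p,X]_*$ of the wedge to be finite); it only argues that $\pi_2(P_{\Sigma B\Z/p}(C))\cong H_2(P_{\Sigma B\Z/p}(C);\Z)$ is a quotient of $H_2(C;\Z)$ and hence controlled by $H_2(X;\Z)$, so you should phrase the finite-generation step at the level of $H_2$ rather than of the whole cofibre.
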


\begin{proof}
Let $S$ be the normal subgroup of $\pi_1(X)$ generated by elements of order $p$ which lift to $X$. Consider the pullback diagram $$\xymatrix{
E \ar[d] \ar[r]  & X \ar[d]  \\
BS \ar[r]^{Bi}  & B\pi_1(X).
}$$

By \cite[Prop 2.1]{CCS2}, the map $E\rightarrow X$ is a $B\Z/p$-cellular equivalence. Since $X$ is $B\Z/p$-cellular, there exists a map $f\colon E\rightarrow CW_{B\Z/p}(E)$ such that $i\circ f\simeq id$ where $i\colon CW_{B\Z/p}(E)\rightarrow E$. In fact, this implies that $p\colon E\rightarrow P_{\Sigma B\Z/p}(C_E)$ is nullhomotopic, $p\simeq p \circ i\circ f \simeq *\circ f \simeq *$, therefore $CW_{B\Z/p}(E)\simeq E\times P_{B\Z/p}(\Omega C_E)$. But this implies that $E$ is $B\Z/p$-cellular since $CW_{B\Z/p}(E)$ is $B\Z/p$-acyclic, and then $E\simeq X$. In particular, $\pi_1(X)=S$.

To prove the second statement, it remains to prove that $\pi_1(CW_{B\Z/p}(X))$ is a finitely generated abelian group. Since $X$ is $1$-connected, then the Chach\'{o}lski's cofibre $C_X$ is $1$-connected and $P_{\Sigma B\Z/p}(C_X)$ is too by Lemma \ref{1-connectednullification}. Then, we see that $\pi_2(P_{\Sigma B\Z/p}(C_X))\cong H_2(P_{\Sigma B\Z/p}(C_X);\Z)$ is a quotient of $H_2(C_X;\Z)$, which in turn is a quotient of the finitely generated group $H_2(X;\Z)$.
\end{proof}



Next we need a technical lemma which describes the somewhat intrincate relationship between completion and nullification and that is a key result to understand under which conditions $P_{A}(\eta)\colon P_{A}(X)\rightarrow P_{A}(X^{\wedge}_p)$ is a mod $p$ equivalence (Corollary \ref{null+modp}).

\begin{lemma}\label{null+comp}
Let $A$ be a connected space, and let $X$ such that $P_A(X^{\wedge}_p)$ and $P_A(X)$ are $p$-good spaces. Assume that $P_A(X)^{\wedge}_p$ and  $P_A(X^{\wedge}_p)^{\wedge}_p$ are $A$-null spaces. Then the $p$-completion map $\eta_X\colon X\rightarrow X^{\wedge}_p$ induces a mod $p$ equivalence $P_A(\eta)\colon P_A(X)\rightarrow P_A(X^{\wedge}_p)$.
\end{lemma}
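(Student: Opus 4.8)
The plan is to reduce the statement to the construction of a homotopy inverse for the $p$-completed map $\Phi:=P_A(\eta)^{\wedge}_p\colon P_A(X)^{\wedge}_p\to P_A(X^{\wedge}_p)^{\wedge}_p$, and then to build that inverse purely from the universal properties of $P_A$ and of $p$-completion. First I would record the reduction. Since $P_A(X)$ and $P_A(X^{\wedge}_p)$ are $p$-good, the coaugmentations $P_A(X)\to P_A(X)^{\wedge}_p$ and $P_A(X^{\wedge}_p)\to P_A(X^{\wedge}_p)^{\wedge}_p$ are mod $p$ equivalences. As $p$-completion carries mod $p$ equivalences to weak equivalences, and conversely a map between $p$-good spaces that becomes a weak equivalence after $p$-completion is itself a mod $p$ equivalence, the naturality square of the $p$-completion coaugmentation shows that $P_A(\eta)$ is a mod $p$ equivalence if and only if $\Phi$ is a weak equivalence. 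Both source and target of $\Phi$ are $p$-complete (as $p$-completion is idempotent) and, by hypothesis, $A$-null.

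To produce the inverse I would exploit two universal properties in turn. Let $d\colon X^{\wedge}_p\to P_A(X)^{\wedge}_p$ be the $p$-completion of the coaugmentation $X\to P_A(X)$. Its target is $A$-null, so the universal property of the nullification coaugmentation $X^{\wedge}_p\to P_A(X^{\wedge}_p)$ yields a factorization $\bar{d}\colon P_A(X^{\wedge}_p)\to P_A(X)^{\wedge}_p$. Next, the coaugmentation $P_A(X^{\wedge}_p)\to P_A(X^{\wedge}_p)^{\wedge}_p$ is a mod $p$ equivalence (from $p$-goodness of $P_A(X^{\wedge}_p)$) while the target $P_A(X)^{\wedge}_p$ is $p$-complete, hence local with respect to mod $p$ equivalences; this extends $\bar{d}$ to the desired map $\Psi\colon P_A(X^{\wedge}_p)^{\wedge}_p\to P_A(X)^{\wedge}_p$.

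Finally I would verify $\Psi\circ\Phi\simeq\mathrm{id}$ and $\Phi\circ\Psi\simeq\mathrm{id}$ by the principle that it suffices to test after restriction. Concretely, a self-map of $P_A(X)^{\wedge}_p$ is determined up to homotopy by its composite with the canonical map $X\to P_A(X)\to P_A(X)^{\wedge}_p$: since $P_A(X)^{\wedge}_p$ is $A$-null, maps out of $P_A(X)^{\wedge}_p$ are detected on $P_A(X)$, and since it is $p$-complete while $P_A(X)$ is $p$-good, they are in turn detected after completing $P_A(X)$; the same applies to $P_A(X^{\wedge}_p)^{\wedge}_p$. A short diagram chase, using the naturality squares of the coaugmentations of $P_A$ and of $(-)^{\wedge}_p$ together with the idempotency of $p$-completion (so that the completion of $\eta_X$ is a weak equivalence), shows that both composites restrict to the canonical coaugmentations, whence they are homotopic to the respective identities.

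The step I expect to be the main obstacle is the clean use of the fact that a $p$-complete space is local with respect to mod $p$ equivalences, i.e. that a mod $p$ equivalence $Y\to Z$ induces a weak equivalence $\map(Z,W)\to\map(Y,W)$ whenever $W$ is $p$-complete; this single non-formal input from \cite{BK} is what legitimizes both the construction of $\Psi$ and the testing-after-restriction arguments, and it must be cited carefully. A secondary technical point is ensuring the homotopy coherence of the two factorizations, so that the composites genuinely restrict to the identity rather than merely to mod $p$ equivalent maps — and it is precisely here that the $p$-goodness hypotheses on $P_A(X)$ and $P_A(X^{\wedge}_p)$, together with the $A$-nullity of the two completions, become indispensable.
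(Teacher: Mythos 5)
Your core construction is the same as the paper's: the map you call $\bar{d}$ is exactly the paper's $\epsilon\colon P_A(X^{\wedge}_p)\to P_A(X)^{\wedge}_p$, obtained from the universal property of the nullification coaugmentation because $P_A(X)^{\wedge}_p$ is assumed $A$-null, and the two identities that drive everything, namely $\epsilon\circ P_A(\eta_X)\simeq \eta_{P_A(X)}$ and $P_A(\eta_X)^{\wedge}_p\circ\epsilon\simeq \eta_{P_A(X^{\wedge}_p)}$, are precisely the two commutative diagrams in the paper's proof. The divergence is in the endgame. The paper never promotes $\epsilon$ to a homotopy inverse: it simply reads off from the first identity that $P_A(\eta_X)^*$ is an epimorphism on mod $p$ cohomology (because $\eta_{P_A(X)}^*$ is an isomorphism by $p$-goodness of $P_A(X)$), and from the second identity plus naturality of $\eta$ that $P_A(\eta_X)^*$ is a monomorphism (because $\eta_{P_A(X^{\wedge}_p)}^*$ is an isomorphism by $p$-goodness of $P_A(X^{\wedge}_p)$). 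That is the whole proof; no detection principle and no inverse are needed.

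The gap in your version is exactly the step you flag as "the main obstacle," and it is not merely a citation issue. Your verification that $\Psi\circ\Phi\simeq\mathrm{id}$ and $\Phi\circ\Psi\simeq\mathrm{id}$ requires that precomposition with $\eta_Y\colon Y\to Y^{\wedge}_p$ be \emph{injective} on homotopy classes of maps into a $p$-complete target, i.e.\ that the Bousfield--Kan completion of a $p$-good space has the universal property of an $H\mathbb{F}_p$-localization. This is not in \cite{BK}: the Bousfield--Kan functor is a completion, not a localization, and "$p$-complete spaces are local with respect to mod $p$ equivalences" is false in general; making it precise under goodness hypotheses requires identifying $Y^{\wedge}_p$ with Bousfield's $L_{\mathbb{F}_p}Y$ (\cite{MR0380779}), which is strictly more machinery than the lemma needs. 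A second, smaller problem: your diagram chase invokes "the completion of $\eta_X$ is a weak equivalence," which is idempotency of completion at $X$ itself; but the lemma does not assume $X$ is $p$-good, only that $P_A(X)$ and $P_A(X^{\wedge}_p)$ are. Both issues evaporate if you stop one level down: having established the two factorization identities, conclude surjectivity and injectivity of $P_A(\eta_X)^*$ on $H^*(-;\mathbb{F}_p)$ directly, which is the statement to be proved.
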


\begin{proof}
Let $\epsilon \colon P_A(X^{\wedge}_p)\rightarrow (P_A(X))^{\wedge}_p$ be the unique map up to homotopy such that the right square of the following diagram commutes:
$$
\xymatrix{
X \ar[r]^{\eta_X} \ar[d]^{\iota_X} &
X^{\wedge}_p \ar[r]^{id} \ar[d]^{\iota_{X^{\wedge}_p}} &
X^{\wedge}_p \ar[d]^{(\iota_X)^{\wedge}_p} \\
P_A(X)\ar[r]^{P_A(\eta_X)} & P_A(X^{\wedge}_p) \ar[r]^{\epsilon} & P_A(X)^{\wedge}_p.
}$$

Note that $\epsilon$ exists because $P_A(X)^{\wedge}_p$ is $A$-null by hypothesis. The left square commutes by naturality, so $(\iota_X)^{\wedge}_p\circ \eta_X \simeq \epsilon \circ P_A(\eta_X) \circ \iota_X$. But also, $(\iota_X)^{\wedge}_p\circ \eta_X \simeq  \eta_{P_A(X)} \circ \iota_X$ by naturality of the completion. Because of the universal property of the nullification functor, $\epsilon \circ P_A(\eta_X)\simeq \eta_{P_A(X)}$. Since $P_A(X)$ is $p$-good, $ \eta_{P_A(X)}^*$ is an isomorphism in mod $p$ cohomology. In particular, $\epsilon^*$ is  a monomorphism and $P_A({\eta}_X)^*$ is an epimorphism.

Now consider the following commutative diagram:
$$
\xymatrix{
X^{\wedge}_p \ar[r]^{id} \ar[d]^{\iota_{X^{\wedge}_p}} &
X^{\wedge}_p \ar[r]^{(\eta_X)^{\wedge}_p} \ar[d]^{(\iota_{X})^{\wedge}_p} &
X^{\wedge}_p \ar[d]^{(\iota_{X^{\wedge}_p})^{\wedge}_p} \\
P_A(X^{\wedge}_p)\ar[r]^{\epsilon} & P_A(X)^{\wedge}_p \ar[r]^{P_A(\eta_X)^{\wedge}_p} & P_A(X^{\wedge}_p)^{\wedge}_p.
}$$
That is $(\iota_{X^{\wedge}_p})^{\wedge}_p \circ (\eta_X)^{\wedge}_p\simeq P_A(\eta)^{\wedge}_p \circ \epsilon \circ \iota_{X^{\wedge}_p}$. But we also have $(\iota_{X^{\wedge}_p})^{\wedge}_p \circ (\eta_X)^{\wedge}_p \simeq (\eta_{P_A(X^{\wedge}_p)})\circ \iota_{X^{\wedge}_p}$. By hypothesis $P_A(X^{\wedge}_p)^{\wedge}_p$ is $A$-null,  then the universal property of the nullification functor  implies that  $P_A({\eta}_X)^{\wedge}_p \circ \epsilon \simeq \eta_{P_A(X^{\wedge}_p)}$. Since $P_A(X^{\wedge}_p)$ is $p$-good,  $(\eta_{P_A(X^{\wedge}_p)})^*$ is an isomorphism and hence $(P_A(\eta_X)^{\wedge}_p)^*$ is a monomorphism. Therefore $P_A(\eta_X)^*$ is so, and we are done.
\end{proof}

\begin{remark}
If $X$ has finite fundamental group, then both  $P_A(X^{\wedge}_p)$ and $P_A(X)$ are $p$-good spaces since they also have finite fundamental groups.
\end{remark}

\begin{cor}\label{null+modp}
If $X$ is a $1$-connected space and $A$ is such that $\tilde{H}_*(A;\Z[\frac{1}{p}])=0$ then $P_{A}(\eta)\colon P_{A}(X)\rightarrow P_{A}(X^{\wedge}_p)$ is a mod $p$ equivalence.
\end{cor}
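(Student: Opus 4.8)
The plan is to deduce this corollary from Lemma~\ref{null+comp} by checking that its two hypotheses hold under the present assumptions. Throughout I would use that the condition $\tilde{H}_*(A;\Z[\frac{1}{p}])=0$ is equivalent, by the universal coefficient theorem over the PID $\Z[\frac{1}{p}]$, to the cohomological vanishing $\tilde{H}^*(A;\Z[\frac{1}{p}])=0$ that is required to invoke Miller's theorem (Theorem~\ref{bastaenp-Miller}) with $W=A$.

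First I would dispose of the $p$-goodness hypothesis. Since $X$ is $1$-connected, so is its $p$-completion $X^{\wedge}_p$, and by Lemma~\ref{1-connectednullification} both $P_A(X)$ and $P_A(X^{\wedge}_p)$ are then $1$-connected as well. A simply connected space is nilpotent and hence $p$-good, so both nullifications are $p$-good, which is exactly the first hypothesis of Lemma~\ref{null+comp}.

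The heart of the argument, and the step I expect to be the main obstacle, is verifying that the completions $P_A(X)^{\wedge}_p$ and $P_A(X^{\wedge}_p)^{\wedge}_p$ are $A$-null, i.e.\ that $p$-completion does not destroy $A$-nullity in this situation. For this I would argue as follows. By definition of the nullification, $P_A(X)$ is $A$-null, and it is nilpotent (indeed $1$-connected) by the previous paragraph. Miller's theorem then applies, so the completion map $P_A(X)\rightarrow P_A(X)^{\wedge}_p$ is an $A$-equivalence, that is, it induces a weak equivalence on $\map_*(A,-)$. Since $P_A(X)$ is $A$-null and connected, $\map_*(A,P_A(X))$ is weakly contractible, hence so is $\map_*(A,P_A(X)^{\wedge}_p)$; as $P_A(X)^{\wedge}_p$ is again $1$-connected, this shows it is $A$-null. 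The identical argument applied to the $1$-connected, $A$-null, nilpotent space $P_A(X^{\wedge}_p)$ shows that $P_A(X^{\wedge}_p)^{\wedge}_p$ is $A$-null as well. This settles the second hypothesis of Lemma~\ref{null+comp}.

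With both hypotheses verified, Lemma~\ref{null+comp} yields directly that $P_A(\eta)\colon P_A(X)\rightarrow P_A(X^{\wedge}_p)$ is a mod $p$ equivalence, completing the proof. The only delicate point is the preservation of $A$-nullity under $p$-completion, which is precisely where the hypothesis on the $\Z[\frac{1}{p}]$-homology of $A$ enters, through Miller's theorem; without it an $A$-null space need not remain $A$-null after completion, and the reduction to Lemma~\ref{null+comp} would break down.
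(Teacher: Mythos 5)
Your proposal is correct and follows essentially the same route as the paper: both verify the two hypotheses of Lemma~\ref{null+comp}, using $1$-connectedness (via Lemma~\ref{1-connectednullification}) to get $p$-goodness and Miller's theorem to show that $P_A(X)^{\wedge}_p$ and $P_A(X^{\wedge}_p)^{\wedge}_p$ are $A$-null. You merely spell out in more detail the steps the paper leaves implicit (the homology/cohomology translation of the hypothesis on $A$ and the deduction of $A$-nullity from the $A$-equivalence $P_A(X)\to P_A(X)^{\wedge}_p$), and these details are accurate.
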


\begin{proof}
If $X$ is $1$-connected then $X^{\wedge}_p$ is also $1$-connected and both spaces are $p$-good. Moreover the $B\Z/p$-nullification of a $1$-connected space is also $1$-connected.  Miller's theorem \cite[Thm 1.5]{Miller} implies that the spaces $P_{A}(X)^{\wedge}_p$ and $P_{A}(X^{\wedge}_p)^{\wedge}_p$ are $A$-null. The hypothesis of Lemma \ref{null+comp} are then satisfied.
\end{proof}

We can also describe a general situation in which the nullification of a mod $p$ equivalence is so.

\begin{cor}\label{null+modp+map}
Let $A$ be a space such that $\tilde{H}_*(A;\Z[\frac{1}{p}])=0$. If $f\colon X\rightarrow Y$ is a mod $p$ equivalence between $1$-connected spaces then $P_{A}(f)\colon P_{A}(X)\rightarrow P_{A}(Y)$  is a mod $p$ equivalence.
\end{cor}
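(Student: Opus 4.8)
The plan is to reduce Corollary~\ref{null+modp+map} to the already-established Corollary~\ref{null+modp} by comparing the two spaces $X$ and $Y$ against their common $p$-completion. The key observation is that a mod $p$ equivalence $f\colon X\rightarrow Y$ between $1$-connected spaces induces a weak equivalence on $p$-completions $f^{\wedge}_p\colon X^{\wedge}_p\rightarrow Y^{\wedge}_p$, since $p$-completion of $1$-connected (hence nilpotent, hence $p$-good) spaces depends only on the mod $p$ homology type. This gives us a bridge: the target spaces $X^{\wedge}_p$ and $Y^{\wedge}_p$ agree up to homotopy, and we already know from Corollary~\ref{null+modp} how $P_A$ behaves along the completion maps $X\rightarrow X^{\wedge}_p$ and $Y\rightarrow Y^{\wedge}_p$.

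Concretely, I would assemble the commutative square of coaugmentation/completion maps. By naturality of the $p$-completion, $f$ fits into a commutative diagram
$$
\xymatrix{
X \ar[r]^{\eta_X} \ar[d]^{f} & X^{\wedge}_p \ar[d]^{f^{\wedge}_p} \\
Y \ar[r]^{\eta_Y} & Y^{\wedge}_p,
}
$$
and applying the functor $P_A$ yields a corresponding square relating $P_A(f)$, $P_A(\eta_X)$, $P_A(\eta_Y)$, and $P_A(f^{\wedge}_p)$. By Corollary~\ref{null+modp}, both horizontal maps $P_A(\eta_X)$ and $P_A(\eta_Y)$ are mod $p$ equivalences. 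Since $f^{\wedge}_p$ is a weak equivalence, so is $P_A(f^{\wedge}_p)$ because $P_A$ is a homotopy functor. The statement then follows by the two-out-of-three property for mod $p$ equivalences applied to the commutative square: from $P_A(\eta_Y)\circ P_A(f)\simeq P_A(f^{\wedge}_p)\circ P_A(\eta_X)$, the three named maps being mod $p$ equivalences force $P_A(f)$ to be one as well.

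I expect the main technical point—the only place requiring genuine care—to be the claim that $f^{\wedge}_p$ is a weak equivalence. This rests on the fact that $1$-connected spaces are nilpotent and $p$-good, so that $p$-completion in the sense of Bousfield–Kan coincides with the homotopical $HF_p$-localization, which inverts precisely the mod $p$ equivalences; hence $f$, being a mod $p$ equivalence, is carried to a weak equivalence. Once this is in hand, everything else is formal: naturality of completion, functoriality of $P_A$, and the two-out-of-three property. No appeal to the delicate comparison argument of Lemma~\ref{null+comp} is needed directly, since all its hypotheses have already been verified for $1$-connected spaces inside the proof of Corollary~\ref{null+modp}, which we use as a black box.
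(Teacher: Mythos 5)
Your proposal is correct and is essentially identical to the paper's own proof: both form the naturality square relating $f$ to $f^{\wedge}_p$ via the completion maps, apply $P_A$, invoke Corollary~\ref{null+modp} for the two vertical arrows and the equivalence $P_A(f^{\wedge}_p)$ for the bottom one, and conclude by two-out-of-three. No substantive difference.
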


\begin{proof}
If $f$ is a mod $p$ equivalence, then $f^{\wedge}_p$ is an equivalence. Then the following diagram commutes
$$\xymatrix{
 P_{A}(X)\ar[d]_{P_{A}(\eta_X)} \ar[r]^{P_{A}(f)} &  P_{A}(Y) \ar[d]^{P_{A}(\eta_Y)} \\
 P_{A}(X^{\wedge}_p) \ar[r]_{P_{A}(f^{\wedge}_p)} &  P_{A}(Y^{\wedge}_p)
}$$
By Corollary \ref{null+modp}, the two vertical arrows are mod $p$ equivalences and the bottom horizontal map is an equivalence. Then $P_{A}(f)$ is a mod $p$ equivalence.
\end{proof}

\begin{remark}\label{remark+null+map}
Note that Corollaries \ref{null+modp} and \ref{null+modp+map} hold when $A=B\Z /p$. In fact, in Corollary \ref{null+modp+map}, one can relax the assumptions on $1$-connectivity by checking that both spaces $X$ and $Y$ satisfy the assumptions of Lemma \ref{null+comp}.
\end{remark}

Now we follow the parallelism giving a condition for the analogous equivalence between cellularizations to hold. According to Proposition \ref{liftin}, the hypothesis of lifting elements in the fundamental group is not a real restriction.

\begin{prop} \label{cw+comp}
Let $X$ be a space whose fundamental group  $\pi_1(X)$ is finite and generated by elements of order $p$ which lift to $X$. Assume that there is a bijection $[B\Z/p,X]=[B\Z/p,X^{\wedge}_p]$,  then the map induced by the $p$-completion $$CW_{B\Z/p}(\eta)\colon CW_{B\Z/p}(X)\rightarrow CW_{B\Z/p}(X^{\wedge}_p)$$ is a mod $p$ equivalence.
\end{prop}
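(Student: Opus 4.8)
The plan is to compare the two fibrations furnished by Chach\'olski's theorem (Theorem \ref{Chacho})
$$CW_{B\Z/p}(X)\to X\to P_{\Sigma B\Z/p}(C_X),\qquad CW_{B\Z/p}(X^{\wedge}_p)\to X^{\wedge}_p\to P_{\Sigma B\Z/p}(C_{X^{\wedge}_p}),$$
where $C_X$ and $C_{X^{\wedge}_p}$ are the respective Chach\'olski cofibres. The completion map $\eta$ induces a morphism between these fibrations. Since $\pi_1(X)$ is finite, $X$ is $p$-good and $\eta$ is a mod $p$ equivalence, so the map of total spaces is a mod $p$ equivalence. Hence, once I know that both bases are $1$-connected and that the induced map of bases is a mod $p$ equivalence, the mod $p$ fibre comparison theorem for the Serre spectral sequence (untwisted because the bases are simply connected; see \cite{BK}) will force the map of fibres $CW_{B\Z/p}(\eta)$ to be a mod $p$ equivalence, which is exactly the assertion.

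For the simple connectivity of the bases I would use Proposition \ref{liftin}: $\pi_1(P_{\Sigma B\Z/p}(C_X))\cong\pi_1(X)/S$, where $S$ is the normal subgroup generated by the order $p$ elements that lift to $X$. By hypothesis these elements generate $\pi_1(X)$, so $S=\pi_1(X)$ and the base is $1$-connected. To obtain the same for $X^{\wedge}_p$ I first note that the natural map $\eta_*\colon\pi_1(X)\to\pi_1(X^{\wedge}_p)$ is onto (the target is the maximal $p$-quotient of the finite group $\pi_1(X)$); composing the lifts $B\Z/p\to X$ of the generators with $\eta$ produces lifts to $X^{\wedge}_p$ of a generating set of $\pi_1(X^{\wedge}_p)$ consisting of elements of order dividing $p$, so Proposition \ref{liftin} again gives $\pi_1(P_{\Sigma B\Z/p}(C_{X^{\wedge}_p}))=1$.

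The heart of the argument, and the step I expect to be hardest, is to show that the map of bases $P_{\Sigma B\Z/p}(C_X)\to P_{\Sigma B\Z/p}(C_{X^{\wedge}_p})$ is a mod $p$ equivalence using only the bijection on \emph{unpointed} classes $[B\Z/p,X]=[B\Z/p,X^{\wedge}_p]$. Applying $\eta_*$ to \emph{pointed} classes gives a morphism of cofibre sequences, with $\bigvee B\Z/p\to X\to C_X$ over $\bigvee B\Z/p\to X^{\wedge}_p\to C_{X^{\wedge}_p}$. Because $\eta$ is a mod $p$ isomorphism and freely homotopic maps induce the same homomorphism on $\tilde H_*(-;\F_p)$, the image of each wedge summand depends only on its unpointed class; the unpointed bijection then guarantees that the images of the two wedge maps in $\F_p$-homology correspond, so the two cofibres agree on the cokernel part of the long exact homology sequence. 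The subtlety is that $\eta_*$ need \emph{not} be injective on pointed classes (its failure is measured by the change of $\pi_1$-stabilisers under $\eta_*$), so $C_X$ may carry extra $\F_p$-homology in the kernel part. The key observation I would exploit is that this extra homology is carried by the suspended wedge summands, hence is $\Sigma B\Z/p$-cellular, and is therefore annihilated by the nullification $P_{\Sigma B\Z/p}$; making precise that the discrepancy between pointed and unpointed classes disappears after applying $P_{\Sigma B\Z/p}$ is the crux. When $\eta_*$ happens to be a pointed bijection (for instance when $\pi_1(X)$ is a $p$-group) the cofibre map is itself a mod $p$ equivalence between $1$-connected spaces and one may invoke Corollary \ref{null+modp+map} directly.

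With the two bases simply connected and the base map a mod $p$ equivalence, the plan closes by feeding $\eta$ (a mod $p$ equivalence on total spaces) together with this base equivalence into the mod $p$ fibre comparison theorem, deducing that the map of fibres $CW_{B\Z/p}(\eta)$ is a mod $p$ equivalence.
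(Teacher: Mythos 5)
Your overall architecture is exactly the paper's: compare the two Chach\'olski fibrations, observe that $\eta$ is a mod $p$ equivalence on total spaces because $\pi_1(X)$ finite makes $X$ $p$-good, check that both bases are $1$-connected, show the induced map of bases is a mod $p$ equivalence, and conclude by the mod $p$ fibre comparison. The treatment of $1$-connectivity of the bases is also the same in substance (the paper argues that $h_1$ and $h_2$ are $\pi_1$-epimorphisms, so the cofibres $C$ and $D$ are $1$-connected and Lemma \ref{1-connectednullification} applies; your route through Proposition \ref{liftin} amounts to the same thing).

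The one place where your proposal is not complete is precisely the step you flag as the crux: you work with the two \emph{canonical} wedges, indexed by $[B\Z/p,X]_*$ and $[B\Z/p,X^{\wedge}_p]_*$ respectively, and then have to argue that the failure of injectivity of $\eta_*$ on pointed classes produces only ``redundant'' homology that dies under $P_{\Sigma B\Z/p}$. As written this is a gap: the map $g\colon C_X\to C_{X^{\wedge}_p}$ between the honest cofibres is in general \emph{not} a mod $p$ equivalence, so you cannot feed it into Corollary \ref{null+modp+map}, and your sketch of why the discrepancy vanishes after nullification is not carried out (the correct statement is that a redundant wedge changes the cofibre by a cofibration whose first term is a wedge of copies of $\Sigma B\Z/p$, and $P_{\Sigma B\Z/p}$ kills such a modification; this is essentially the ``exhaustive wedge'' form of Chach\'olski's theorem used elsewhere in the paper, e.g.\ in Proposition \ref{BSO}). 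The paper sidesteps the issue entirely by indexing \emph{both} wedges by $[B\Z/p,X]_*$ and taking $h_2=\eta\circ h_1$: the hypothesis on unpointed classes together with the surjection $\pi_1(X)\to\pi_1(X^{\wedge}_p)$ guarantees that $h_2$ still hits every pointed class in $X^{\wedge}_p$, so Chach\'olski's fibration is still valid for $X^{\wedge}_p$ with this (possibly redundant) wedge, and now the left vertical map in the cofibre comparison is the identity. The five lemma in mod $p$ homology then makes $g$ a mod $p$ equivalence between $1$-connected spaces \emph{before} nullification, and Corollary \ref{null+modp+map} applies directly. If you reorganize your argument this way, the step you identified as hardest disappears.
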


\begin{proof} Since $\pi_1(X)$ is finite, $X$ is $p$-good \cite[VII.5.1]{BK}. There is an epimorphism $\pi_1(X)\rightarrow \pi_1(X^{\wedge}_p)$ and, by assumption, $[B\Z/p,X]\cong [B\Z/p,X^{\wedge}_p]$. In order to compute the cellularization, we first analyze Chach\'{o}lski's cofibres

$$\xymatrix{
\vee B\Z/p \ar[r]^{h_1} \ar[d]^{id} & X \ar[d]^{\eta} \ar[r] & C\ar[d]^{g} \\
\vee B\Z/p \ar[r]^{h_2} & X^{\wedge}_p \ar[r] & D.
}$$

Since $\pi_1(X)$ is finite and generated by elements of order $p$ which lift to $X$, the maps $h_1$ and $h_2$ induce epimorphisms on the fundamental group and then $C$ and $D$ are 1-connected spaces. Moreover $g$ is a mod $p$ equivalence. Now, the cellularization fits in the following diagram of fibrations:
$$\xymatrix{
CW_{B\Z/p}(X) \ar[r] \ar[d]^{CW_{B\Z/p}(\eta)} & X \ar[d]^{\eta} \ar[r] & P_{\Sigma B\Z/p}(C)\ar[d]^{P_{\Sigma B\Z/p}(g)} \\
CW_{B\Z/p}(X^{\wedge}_p) \ar[r] & X^{\wedge}_p \ar[r] & P_{\Sigma B\Z/p}(D),
}$$
 where $P_{\Sigma B\Z/p}(C)$ and $P_{\Sigma B\Z/p}(D)$ are $1$-connected. All the spaces in the previous diagram are $p$-good. Therefore, to show that $CW_{B\Z/p}(\eta)$ is a mod $p$ equivalence, it is enough to prove that $P_{\Sigma B\Z/p}(g)$ is so. This follows from the previous Corollary \ref{null+modp+map} since $g$ is a mod $p$-equivalence.
 \end{proof}

\begin{remark}
We note in Example \ref{no1connected} that $CW_{B\Z/p}(X^\wedge_p)$ does not need to be $p$-complete and a condition for this to be true was stated in Lemma
\ref{p-complete-cellularization}. If $X$ satisfies the hypothesis of Proposition \ref{cw+comp}, we see from the proof that $CW_{B\Z/p}(X^\wedge_p)$ is $p$-complete if $P_{\Sigma B\Z/p}(D)$ is so. This last space is $1$-connected and, using an arithmetic Sullivan square argument, we see that this is the case if $X^\wedge_p\rightarrow P_{\Sigma B\Z/p}(D)^\wedge_p$ is a rational equivalence. Examples of this situation are provided by classifying spaces of finite groups, since $(BG^\wedge_p)_\Q\simeq *$ and  $P_{\Sigma B\Z/p}(D)^\wedge_p\simeq P_{\Sigma B\Z/p}(C)^\wedge_p$ is homotopic to the $p$-completion of the classifying space of a finite group by \cite[Proposition 5.5]{FS07} and \cite[Theorem 4.3]{FFcw}.
\end{remark}

\begin{remark}
 The hypothesis of Proposition \ref{cw+comp} are satisfied if $\pi_1(X)$ is a finite $p$-group generated by elements of order $p$ which lift to $X$ (see \cite[Proof of 3.1]{DZ}).
\end{remark}

\begin{remark}\label{remark-p-toral}
Let $P$ be a $p$-toral group. Then $P$ is  an extension of a finite $p$-group $\pi$ by a torus $(S^1)^n$. Assume that $\pi$ is generated by elements of order $p$ which lift to $BP$. The arguments of \cite[proof of 3.1]{DZ} applied to the fibration $B(S^1)^n\rightarrow BP\rightarrow B\pi$ show that $[B\Z/p,BP]=[B\Z/p,BP^{\wedge}_p]$. Then $BP^\wedge_p$ is a $p$-compact toral group, and $CW_{B\Z/p}(\eta)\colon CW_{B\Z/p}(BP)\rightarrow CW_{B\Z/p}(BP^{\wedge}_p)$ is a mod $p$ equivalence.

By \cite[Proposition 6.9]{DW-annals}, there exists a discrete $p$-toral group $P_\infty$, that is, an extension of a finite $p$-group $\pi$ by a finite sum of Pr\"ufer groups $(\Z/p^\infty)^n$, such that $BP_\infty\rightarrow BP^\wedge_p$ is a mod $p$ equivalence. Moreover, $[B\Z/p,BP_\infty]\cong [B\Z/p,(BP_\infty)^\wedge_p]$ by \cite[Remark 6.12]{DW-annals}, so we should study, up to $p$-completion, the $B\Z/p$-cellullarization of discrete $p$-toral groups. See Example \ref{cw-p-toral}.
\end{remark}

\section{$B\Z/p$-nullification of classifying spaces}

In this section, we are concerned with $ B\Z
/p$-nullification. The original motivating example for our study were classifying spaces of compact Lie groups, for which Dwyer computed in \cite{MR97i:55028}
the value of $P_{B\Z /p}BG$ for the case in which $\pi_0(G)$ is a (finite) $p$-group. For this sake, he used an induction principle based on the
centralizer decomposition of $BG$, a method that also
solve the problem when we take a $p$-compact group $X$ instead of
$G$. However, the hypothesis over the fundamental group is
essential and cannot be removed from his proof, so we need to
follow a completely different path to solve the general case. In fact, our new strategy was useful to describe $P_{B\Z /p}X$ for a bigger family of spaces, which in particular need not to be classifying spaces.





Recall that, if $S$ is a set of primes, the $S$-radical subgroup $T_S(G)$ of a finite group $G$ is the smallest normal subgroup of $G$ which contains all the $S$-torsion. This is the last ingredient we need to state the main result of this section.

\begin{teo}\label{generalmainthmnull}
Let $X$ be a connected space with finite fundamental group and such that $P_{B\Z/p}(X\langle 1 \rangle)\simeq *$. Then there is a fibration
$$L_{\Z[\frac{1}{p}]}(X_p)\rightarrow P_{B\Z/p}(X)\rightarrow B(\pi_1(X)/T_p(\pi_1(X)))$$ where $X_p$ is the covering of $X$ whose fundamental group is $T_p(\pi_1(X))$, and $L_{\Z[\frac{1}{p}]}(X_p)$ denotes the homological localization of $X_p$ in the ring $\Z[\frac{1}{p}]$.
\end{teo}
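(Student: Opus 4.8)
The plan is to reduce $P_{B\Z/p}(X)$ to a fibration over the classifying space of the prime-to-$p$ quotient of $\pi_1(X)$ by fiberwise nullification, and then to identify the nullified fibre with the homological localization. Write $\Gamma=\pi_1(X)$, $T=T_p(\Gamma)$ and $G'=\Gamma/T$. By definition $T$ is the normal subgroup of the finite group $\Gamma$ generated by its $p$-torsion, so $G'$ has order prime to $p$; hence $\map(B\Z/p,BG')\simeq BG'$ (its components are indexed by $\Hom(\Z/p,G')=0$, with contractible constituents by \cite{DZ}), i.e. $BG'$ is $B\Z/p$-null. The covering $X_p\to X$ attached to the normal subgroup $T$ is the homotopy fibre of the composite $X\to B\Gamma\to BG'$, so there is a fibration
$$X_p\longrightarrow X\longrightarrow BG',$$
in which $\pi_1(X_p)\cong T$ and the universal cover of $X_p$ is $X\langle 1\rangle$.

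First I would apply fiberwise $B\Z/p$-nullification to this fibration, producing a fibration $P_{B\Z/p}(X_p)\to\overline X\to BG'$ together with a structure map $X\to\overline X$ over $BG'$ which is a $P_{B\Z/p}$-equivalence (that is, it induces $\map(\overline X,Z)\simeq\map(X,Z)$ for every $B\Z/p$-null $Z$). Since the fibre $P_{B\Z/p}(X_p)$ and the base $BG'$ are both $B\Z/p$-null, the total space $\overline X$ is $B\Z/p$-null by \cite[3.D.3]{Dror-Farjoun95}. A $P_{B\Z/p}$-equivalence whose target is $B\Z/p$-null exhibits that target as the nullification, so $\overline X\simeq P_{B\Z/p}(X)$ and we obtain
$$P_{B\Z/p}(X_p)\longrightarrow P_{B\Z/p}(X)\longrightarrow BG'.$$

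It then remains to show $P_{B\Z/p}(X_p)\simeq L_{\Z[\frac{1}{p}]}(X_p)$, and for this I would use the covering fibration $X\langle 1\rangle\to X_p\to BT$. By hypothesis $P_{B\Z/p}(X\langle 1\rangle)\simeq *$, so the fibre $X\langle 1\rangle$ is $B\Z/p$-acyclic, and since $\tilde H_*(B\Z/p;\Z[\frac{1}{p}])=0$ and $B\Z/p$-acyclic spaces are $\Z[\frac{1}{p}]$-acyclic, it is also $\Z[\frac{1}{p}]$-acyclic. Fiberwise nullifying $X_p\to BT$ collapses the fibre to a point, so $X_p\to BT$ is a $P_{B\Z/p}$-equivalence and $P_{B\Z/p}(X_p)\simeq P_{B\Z/p}(BT)$; in parallel, the Serre spectral sequence with $\Z[\frac{1}{p}]$-coefficients (whose fibre homology is trivial) shows $X_p\to BT$ is a $\Z[\frac{1}{p}]$-homology equivalence, whence $L_{\Z[\frac{1}{p}]}(X_p)\simeq L_{\Z[\frac{1}{p}]}(BT)$. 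Finally, because $B\Z/p$ is $\Z[\frac{1}{p}]$-acyclic, every $\Z[\frac{1}{p}]$-local space is $B\Z/p$-null, producing a natural map $P_{B\Z/p}(BT)\to L_{\Z[\frac{1}{p}]}(BT)$; that it is an equivalence is exactly the finite-group case $T=T_p(T)$ established in \cite{MR97i:55028,Ramon,FS07}. Chaining these equivalences identifies the fibre as $L_{\Z[\frac{1}{p}]}(X_p)$.

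The crux, and the step I expect to fight hardest, is the fiberwise nullification argument: I must ensure that the fibre of $P_{B\Z/p}(X)\to BG'$ is genuinely $P_{B\Z/p}(X_p)$ and not a twisted form, which amounts to checking that the $G'$-monodromy of the covering is compatible with the nullification functor and that $X\to\overline X$ is an honest $P_{B\Z/p}$-equivalence. This is precisely where finiteness of $\Gamma$ and the prime-to-$p$ hypothesis on $G'$ (guaranteeing $BG'$ is null) enter; the acyclicity hypothesis on $X\langle 1\rangle$ then does the parallel work of collapsing $X_p\to BT$ both $B\Z/p$-nullically and $\Z[\frac{1}{p}]$-homologically, reducing everything to the already-understood finite-group computation.
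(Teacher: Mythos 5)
Your reduction to the covering fibration $X_p\to X\to B(\pi_1(X)/T_p(\pi_1(X)))$ and the fibrewise-nullification argument that the base, being $B\Z/p$-null, survives to give $P_{B\Z/p}(X_p)\to P_{B\Z/p}(X)\to B(\pi_1(X)/T_p(\pi_1(X)))$ is exactly the paper's first step. Your identification of the fibre, however, is a genuinely different and much shorter route: reading the hypothesis $P_{B\Z/p}(X\langle 1\rangle)\simeq *$ literally, $X\langle 1\rangle$ is $B\Z/p$-acyclic, hence also $H\Z[\frac{1}{p}]$-acyclic (the coaugmentation $X\langle 1\rangle\to L_{\Z[\frac{1}{p}]}(X\langle 1\rangle)$ is a null-homotopic homology equivalence because $\Z[\frac{1}{p}]$-local spaces are $B\Z/p$-null), so both fibrewise nullification and the Serre spectral sequence collapse $X_p$ onto $BT_p(\pi_1(X))$ and the finite-group case of \cite{Ramon} finishes the argument. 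This avoids the paper's Theorem~\ref{generalthmnull} --- the fracture-square and Zabrodsky mapping-space machinery --- entirely, and for the statement as printed it is correct.

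The serious caveat is that the hypothesis as printed in Section 4 is evidently a typo for the version in the Introduction, $(P_{B\Z/p}(X\langle 1\rangle))^\wedge_p\simeq *$: the literal hypothesis forces $L_{\Z[\frac{1}{p}]}(X\langle 1\rangle)\simeq *$ and so excludes every intended application (for a simply connected compact Lie group $G$, $P_{B\Z/p}(BG)\simeq L_{\Z[\frac{1}{p}]}(BG)$ is rationally nontrivial, whereas your argument with $T$ trivial would give $P_{B\Z/p}(BG)\simeq P_{B\Z/p}(\ast)\simeq\ast$). Under the intended hypothesis your proof breaks at its first real step: $X\langle 1\rangle$ is then only mod $p$ acyclic after nullification, not $B\Z/p$-acyclic, so the fibre of $X\langle 1\rangle\to X_p\to BT$ does not collapse and its $\Z[\frac{1}{p}]$-homology need not vanish; both identifications $P_{B\Z/p}(X_p)\simeq P_{B\Z/p}(BT)$ and $L_{\Z[\frac{1}{p}]}(X_p)\simeq L_{\Z[\frac{1}{p}]}(BT)$ fail. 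The paper instead keeps the nullified fibre --- it fibrewise-nullifies $X\langle 1\rangle\to X\to B\pi_1(X)$ to get a total space $\bar X$ with fibre $P_{B\Z/p}(X\langle 1\rangle)\simeq L_{\Z[\frac{1}{p}]}(X\langle 1\rangle)$, checks that generators of $\pi_1(\bar X)$ lift, and then proves $P_{B\Z/p}(\bar X)\simeq L_{\Z[\frac{1}{p}]}(\bar X)$ via the mapping-space argument of Theorem~\ref{generalthmnull}. So either state explicitly that you are proving the literal (weaker) statement, or adopt the paper's machinery for the fibre identification.
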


Theorem \ref{generalmainthmnull} will be a consequence of the following result.

\begin{teo}\label{generalthmnull}
Let $X$ be a connected space with finite fundamental group generated by $p$-torsion elements which lift to $X$ and such that $P_{B\Z/p}(X\langle 1 \rangle)\simeq *$. Then there is an equivalence
$P_{B\Z/p}(X)\rightarrow L_{\Z[\frac{1}{p}]}(X),$ where $L_{\Z[\frac{1}{p}]}(X)$.
\end{teo}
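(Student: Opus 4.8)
The plan is to show that the coaugmentation $\iota\colon X\to P_{B\Z/p}(X)$ exhibits $P_{B\Z/p}(X)$ as the $\Z[\frac{1}{p}]$-homological localization of $X$; by the universal property of $L_{\Z[\frac1p]}$ this identifies the two. Since $\tilde H_*(B\Z/p;\Z[\frac1p])=0$, the space $K(\Z[\frac1p],n)$ is $B\Z/p$-null for every $n>0$ (as in the proof of Lemma \ref{acyclic-cellularization}), so every $H\Z[\frac1p]$-local space is $B\Z/p$-null; in particular $L_{\Z[\frac1p]}(X)$ is $B\Z/p$-null, and the coaugmentation $X\to L_{\Z[\frac1p]}(X)$ factors through a map $\epsilon\colon P_{B\Z/p}(X)\to L_{\Z[\frac1p]}(X)$. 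It then suffices to prove that (a) $\iota$ is an $H\Z[\frac1p]$-equivalence and (b) $P_{B\Z/p}(X)$ is $H\Z[\frac1p]$-local, for then $\epsilon$ is an $H\Z[\frac1p]$-equivalence between $H\Z[\frac1p]$-local spaces, hence a weak equivalence.

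For (a), the first observation is that the homotopy fibre $F$ of $\iota$ is $B\Z/p$-acyclic: applying $P_{B\Z/p}$ to the fibration $F\to X\to P_{B\Z/p}(X)$, whose base is already $B\Z/p$-null, and using that $P_{B\Z/p}(\iota)$ is an equivalence by idempotency, \cite[3.D.3]{Dror-Farjoun95} (exactly as in Lemma \ref{suspension-null}) produces a fibration $P_{B\Z/p}(F)\to P_{B\Z/p}(X)\to P_{B\Z/p}(X)$ whose base map is an equivalence, whence $P_{B\Z/p}(F)\simeq *$. A $B\Z/p$-acyclic space satisfies $\tilde H^*(-;\Z[\frac1p])=0$, since $\map_*(F,K(\Z[\frac1p],n))\simeq \map_*(P_{B\Z/p}(F),K(\Z[\frac1p],n))\simeq *$; thus $F$ is $\Z[\frac1p]$-acyclic, and the Serre spectral sequence of $F\to X\to P_{B\Z/p}(X)$ shows $\iota$ is an $H\Z[\frac1p]$-equivalence.

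For (b) I would analyse $P_{B\Z/p}(X)$ through its Postnikov data. Its fundamental group is trivial: the coaugmentation is surjective on $\pi_1$, while each generator of $\pi_1(X)$, being a $p$-torsion element that lifts to a map $B\Z/p\to X$, becomes null after composition with $\iota$ into the $B\Z/p$-null space $P_{B\Z/p}(X)$, so its image in $\pi_1(P_{B\Z/p}(X))$ vanishes; a surjection trivial on generators has trivial target. Hence $P_{B\Z/p}(X)$ is simply connected. It is moreover rationally trivial: by (a) its rational homology agrees with that of $X$, which by the Serre spectral sequence of $X\langle 1\rangle\to X\to B\pi_1(X)$ (the fibre is $B\Z/p$-acyclic, hence $\Q$-acyclic) agrees with $H_*(B\pi_1(X);\Q)=H_*(*;\Q)$ as $\pi_1(X)$ is finite. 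Therefore the homotopy groups of $P_{B\Z/p}(X)$ are torsion. It remains to kill the $p$-primary part, and here I would use that a simply connected, mod $p$ acyclic space is $\Z[\frac1p]$-local, which for the nilpotent space $P_{B\Z/p}(X)$ coincides with being $H\Z[\frac1p]$-local.

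The main obstacle is exactly this last step, the vanishing of $\tilde H_*(P_{B\Z/p}(X);\F_p)$. Nullity alone does not suffice --- $S^3$ is simply connected and $B\Z/p$-null yet far from $\Z[\frac1p]$-local --- so one must combine rational triviality with the interplay between nullification and $p$-completion. Concretely, I would invoke Corollary \ref{null+modp} and Remark \ref{remark+null+map} (whose hypotheses hold because $\pi_1(X)$ is finite, so $X$ is $p$-good, and $P_{B\Z/p}(X)$ is simply connected) to identify $\tilde H_*(P_{B\Z/p}(X);\F_p)$ with $\tilde H_*(P_{B\Z/p}(X^\wedge_p);\F_p)$, reducing the claim to the $B\Z/p$-acyclicity of $X^\wedge_p$; alternatively, since the homotopy groups are already torsion, one can inspect the bottom nonzero Postnikov section $K(\pi_n,n)$ and contradict Lemma \ref{nullK(G,n)} should $\pi_n$ carry $p$-torsion. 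Making either route rigorous --- in particular controlling the $p$-completed fibration $(X\langle1\rangle)^\wedge_p\to X^\wedge_p\to B(\pi_1(X))^\wedge_p$, where the finite but possibly non-$p$ fundamental group may act non-nilpotently on the mod $p$ homology of the fibre --- is the delicate heart of the argument, and is where the hypothesis $P_{B\Z/p}(X\langle1\rangle)\simeq *$ must be fully exploited.
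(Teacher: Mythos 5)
Your overall strategy is the mirror image of the paper's: you verify the universal property of $L_{\Z[\frac{1}{p}]}$ at $P_{B\Z/p}(X)$, whereas the paper verifies the universal property of $P_{B\Z/p}$ at $L_{\Z[\frac{1}{p}]}(X)$ --- it shows $L_{\Z[\frac{1}{p}]}(X)$ is $B\Z/p$-null via Lemma \ref{radicalperfect} and \cite[Lemma 6.2]{MR97i:55028}, and then that $X\to L_{\Z[\frac{1}{p}]}(X)$ induces equivalences on $\map_*(-,Y)$ for every $B\Z/p$-null $Y$ by fracturing $Y$ arithmetically and feeding the hypothesis through Lemmas \ref{univcoefficients}, \ref{covering} and \ref{fundamentalgroups}. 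Your part (a) and the simple-connectivity and rational-triviality steps of (b) are sound (for (a), note that the mapping-space argument actually gives $\tilde H^*(F;M)=0$ for \emph{every} $\Z[\frac{1}{p}]$-module $M$, which is what you need to run the homology Serre spectral sequence). But the step you yourself flag, $\tilde H_*(P_{B\Z/p}(X);\F_p)=0$, is a genuine gap, and neither of your proposed routes closes it. Route 1 terminates at the claim that $X^\wedge_p$ is $B\Z/p$-acyclic, which is false even under the hypotheses of the theorem: for $X=K(\Z/p^\infty,2)$ one has $P_{B\Z/p}(X)\simeq *$, yet $X^\wedge_p\simeq K(\Z^\wedge_p,3)$ maps nontrivially to its rationalization, which is $B\Z/p$-null, so $P_{B\Z/p}(X^\wedge_p)\not\simeq *$. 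Route 2 tacitly assumes that the bottom Postnikov section of a $B\Z/p$-null space is again $B\Z/p$-null, which your own example $S^3$ refutes: $S^3$ is $B\Z/p$-null by Miller, but $K(\Z,3)$ is not, since $\tilde H^*(B\Z/p;\Z)\neq 0$.

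The gap can in fact be closed with tools already in the paper, and doing so shows where the hypothesis on $X\langle 1\rangle$ really enters. Write $Y=P_{B\Z/p}(X)$. You have shown $Y$ is simply connected, hence $p$-good, so $\eta\colon Y\to Y^\wedge_p$ is a mod $p$ homology isomorphism, and $Y^\wedge_p$ is a connected, $p$-complete, $B\Z/p$-null space by Theorem \ref{bastaenp-Miller}. The hypothesis gives $\map_*(X\langle 1\rangle,Z)\simeq *$ for every such $Z$, so Lemma \ref{covering}(2) yields $\map_*(X,Y^\wedge_p)\simeq *$; since $Y^\wedge_p$ is $B\Z/p$-null this mapping space is $\map_*(Y,Y^\wedge_p)$, so $\eta$ itself is nullhomotopic and therefore $\tilde H_*(Y;\F_p)=0$. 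One caveat: your rational-triviality step uses the literal hypothesis $P_{B\Z/p}(X\langle 1\rangle)\simeq *$, whereas the paper actually applies this theorem under the weaker condition $\map_*(X\langle 1\rangle,Z)\simeq *$ for all connected $B\Z/p$-null $p$-complete $Z$ (equivalently $(P_{B\Z/p}(X\langle 1\rangle))^\wedge_p\simeq *$, as in the introduction and in the proofs of Theorems \ref{generalmainthmnull} and \ref{teoremaprincipal}). Under that weaker hypothesis $P_{B\Z/p}(X)$ need not be rationally trivial (e.g.\ $X=BG$ for $G$ a connected compact Lie group), so your reduction to ``simply connected, torsion homotopy, mod $p$ acyclic'' collapses, while the paper's fracture argument on the target $Y$ survives unchanged.
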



Now in order to prove Theorem \ref{generalthmnull} we follow the strategy of the second author in \cite{Ramon} when dealing with classifying spaces of finite groups, although now there is rational information that is absent in the finite case. Before, however, we will be deal with some issues concerning to the fundamental group of $X$ which will be crucial in the proof.

\begin{lemma}
\label{radicalperfect}
Let $G$ be a finite group and $S$ a set of primes that divide the
order of $G$. If $G=\emph{T}_SG$, then $G$ is
$S^{-1}$-perfect. In particular, if $X$ is a space with finite fundamental group such that $\pi_1X=T_S(\pi_1(X))$, then $L_{\mathbb{Z}[S^{-1}]}(X)$ is simply-connected.
\end{lemma}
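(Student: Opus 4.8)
The plan is to split the statement into its purely group-theoretic core and the topological consequence, and to treat them in turn. Throughout I read $S^{-1}$-\emph{perfect} as the condition $H_1(G;\Z[S^{-1}])=G^{\mathrm{ab}}\otimes\Z[S^{-1}]=0$; since $\Z[S^{-1}]$ is flat over $\Z$, for a finite group this is equivalent to asking that $G^{\mathrm{ab}}$ be $S$-torsion, i.e.\ that every prime dividing $|G^{\mathrm{ab}}|$ lie in $S$.

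First I would prove the group-theoretic assertion. By definition $T_SG$ is the normal closure of the set of $S$-torsion elements of $G$, so the hypothesis $G=T_SG$ says that $G$ is generated by $S$-torsion elements together with their conjugates. Passing to the abelianization, where conjugation acts trivially, $G^{\mathrm{ab}}$ is generated by the images of $S$-torsion elements; each such image has order an $S$-number, and a finite abelian group generated by elements of $S$-power order is itself $S$-torsion. Hence $G^{\mathrm{ab}}\otimes\Z[S^{-1}]=0$ and $G$ is $S^{-1}$-perfect. This step is routine.

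For the topological consequence, put $R=\Z[S^{-1}]$ and $G=\pi_1(X)$, which is finite and, by the first part, satisfies $H_1(G;R)=0$. The plan is to invoke Bousfield's identification of the fundamental group of a homological localization, $\pi_1(L_R X)\cong E_{HR}(\pi_1 X)$, the $HR$-localization of $G$ in the category of groups (\cite{MR0380779}), and then to show that the $HR$-localization of a finite $R$-perfect group is trivial. I would argue this through Bousfield's structure theory for $E_{HR}$: the localization factors through the quotient of $G$ by its $HR$-perfect radical (the maximal normal $R$-perfect subgroup) followed by an $HR$-nilpotent completion. When $G$ is itself $R$-perfect it \emph{is} its own $HR$-perfect radical, so that quotient is trivial and $E_{HR}(G)=E_{HR}(1)=1$, giving $\pi_1(L_R X)=1$. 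An equivalent space-level route, kept in reserve, is to produce an $R$-homology equivalence $X\to Y$ with $Y$ simply connected by an $R$-local plus construction killing the whole ($R$-perfect) group $\pi_1(X)$; then $L_R$ inverts this equivalence and sends the simply connected $Y$ to a simply connected space (again because $\pi_1(L_R Y)=E_{HR}(1)=1$).

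I expect the main obstacle to be precisely the last input in either route: that a finite $R$-perfect group has trivial $HR$-localization, equivalently that attaching $2$- and $3$-cells to kill a normal $R$-perfect subgroup yields an $R$-homology isomorphism. The group-theoretic reduction and the passage to $\pi_1$ via Bousfield's theorem are formal; the content lies in controlling $\pi_1$ of the localization, where one must rule out a nontrivial $HR$-local $R$-perfect quotient. This is exactly the point at which the finiteness of $\pi_1(X)$ and the perfect-radical description of $E_{HR}$ are essential, and I would make sure the cited form of Bousfield's result covers the finite case cleanly rather than only the residually nilpotent situation.
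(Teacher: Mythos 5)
Your group-theoretic first step coincides with the paper's: the abelianization of a group generated by $S$-torsion is $S$-torsion, so it vanishes after tensoring with $\Z[S^{-1}]$. For the topological statement, however, you take a genuinely different route. The paper stays inside Bousfield--Kan: since $H_1(X;\Z[S^{-1}])=0$, the space $X$ is $\Z[S^{-1}]$-good and its $\Z[S^{-1}]$-completion is simply connected by \cite[VII.3.2]{BK}, and for a connected $R$-good space the $R$-completion agrees with the $HR$-localization, so $L_{\Z[S^{-1}]}(X)\simeq \Z[S^{-1}]_\infty X$ is $1$-connected. You instead compute $\pi_1$ of the localization directly via Bousfield's identification $\pi_1(L_RX)\cong E_{HR}(\pi_1X)$ together with the observation that an $R$-perfect group has trivial $HR$-localization. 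That observation can be made more directly than your perfect-radical argument: if $H_1(G;R)=0$ then $G\to 1$ is already an $HR$-equivalence of groups ($H_1$ an isomorphism between zero groups, $H_2$ trivially surjecting onto $0$), so $E_{HR}(G)=E_{HR}(1)=1$ with no appeal to the factorization through nilpotent completion. The one caveat, which you flag yourself, is the citation: the isomorphism $\pi_1(L_RX)\cong E_{HR}(\pi_1X)$ is not in the 1975 Topology paper \cite{MR0380779} but in Bousfield's 1977 Memoir on homological localization towers for groups and $\Pi$-modules; with that reference supplied your main route is complete. It buys a clean isolation of the fundamental group at the cost of a deeper input, whereas the paper's completion argument is more elementary and additionally identifies $L_{\Z[S^{-1}]}(X)$ with the $\Z[S^{-1}]$-completion, a fact it exploits elsewhere. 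Your reserve route via an $R$-local plus construction is the least well supported of the three and is not needed.
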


\begin{proof}
The first statement follows from the fact that, since $G$ is generated by
$S$-torsion, $G_{ab}$ is an abelian finite $S$-torsion subgroup,
and then $\mathbb{Z}[S^{-1}]\otimes G^{ab}=0$.

For the second statement, observe that as $G$ is $S^{-1}$-perfect, then $X$
is a $\mathbb{Z}[S^{-1}]$-good space, and
the $\Z[S^{-1}]$-completion of $X$ is $1$-connected, by \cite[VII.3.2]{BK}. But for a connected $\Z[\frac{1}{p}]$-good space $X$, the  $\Z[\frac{1}{p}]$-completion is an $H_*(-;\Z[\frac{1}{p}])$-localization (see \cite[page 205]{BK}).
\end{proof}

In particular, if $X$ is a connected space such that its fundamental group
is finite and equal to its $\mathbb{Z}/p$-radical, then
$L_{\mathbb{Z}[1/p]}X$ is a simply-connected space.

\begin{lemma}
\label{univcoefficients}
Let $X$ be a connected space and $p$ a prime. Then the coaugmentation $X\ra L_{\Z [1/p]}X$ is an $\mathbb{F}_q$-equivalence and a $\Q$-equivalence where $q$ is a prime such that $(q,p)=1$. If $L_{\Z [1/p]}X$ is $1$-connected then $L_{\Z [1/p]}X$ is $\F_p$-acyclic.
\end{lemma}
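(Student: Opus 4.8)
The plan is to read off all three claims from the single defining property of Bousfield's homological localization, namely that the coaugmentation $\iota\colon X\to L_{\Z[1/p]}X$ induces an isomorphism on $H_*(-;\Z[1/p])$. The ring $\Z[1/p]$ is a localization of $\Z$, hence a PID, which is exactly what makes the universal coefficient machinery available in its cleanest form.

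For the $\Q$- and $\F_q$-equivalence statements, I would first observe that both $\Q$ and $\F_q$ (for $q$ prime to $p$) are $\Z[1/p]$-modules: the ring map $\Z[1/p]\to\F_q$ exists precisely because $p$ is invertible mod $q$, while $\Q$ is even a further localization of $\Z[1/p]$. For any $\Z[1/p]$-module $M$ one has $C_*(-;M)\cong C_*(-;\Z[1/p])\otimes_{\Z[1/p]}M$, and the chains $C_*(-;\Z[1/p])$ are free over the PID $\Z[1/p]$. Hence the algebraic universal coefficient theorem over $\Z[1/p]$ provides a short exact sequence, natural in the space, relating $H_*(-;M)$ to the terms $H_*(-;\Z[1/p])\otimes_{\Z[1/p]}M$ and $\mathrm{Tor}_1^{\Z[1/p]}(H_{*-1}(-;\Z[1/p]),M)$. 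Since $\iota$ is an isomorphism on $H_*(-;\Z[1/p])$, it induces isomorphisms on both the tensor and the Tor terms, so the five lemma shows $\iota$ is an $H_*(-;M)$-isomorphism. Taking $M=\Q$ and $M=\F_q$ settles the first two assertions; note the argument correctly breaks down for $M=\F_p$, since $\F_p$ admits no $\Z[1/p]$-module structure.

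For the last assertion I would use that a $1$-connected space is nilpotent, so that the $H\Z[1/p]$-local space $L_{\Z[1/p]}X$ is a nilpotent space that is local with respect to $\Z[1/p]=\Z_{(P)}$, where $P$ is the set of primes different from $p$. By the localization theory of nilpotent spaces this forces its integral homology groups $\tilde H_*(L_{\Z[1/p]}X;\Z)$ (equivalently, its homotopy groups) to be $\Z[1/p]$-modules, hence uniquely $p$-divisible and free of $p$-torsion. For such a module $M$ one has $M\otimes\F_p=0$ and $\mathrm{Tor}_1^{\Z}(M,\F_p)=0$, so the integral universal coefficient sequence collapses and $\tilde H_*(L_{\Z[1/p]}X;\F_p)=0$.

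The step I expect to be the real content, as opposed to formal homological algebra, is this final one: identifying the $1$-connected $H\Z[1/p]$-local space with a classically $\Z[1/p]$-localized nilpotent space, so as to conclude that its integral homology is a $\Z[1/p]$-module. This rests on the agreement of Bousfield's homological localization with the Bousfield--Kan/Sullivan localization on nilpotent spaces, the same circle of ideas already invoked in Lemma \ref{radicalperfect}; everything else is the universal coefficient theorem over the PID $\Z[1/p]$ together with the five lemma.
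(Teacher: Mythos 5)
Your proof is correct and follows essentially the same route as the paper: the first two assertions are exactly the paper's universal-coefficient argument ($\Q$ and $\F_q$ are $\Z[1/p]$-modules, so an $H_*(-;\Z[1/p])$-isomorphism is a $G$-equivalence for any such module $G$). For the final assertion the paper simply cites Dwyer's Lemma 6.2 of \cite{MR97i:55028}, and your argument via the identification of $1$-connected $H\Z[1/p]$-local spaces with classically $\Z[1/p]$-localized nilpotent spaces, whose homotopy and homology groups are then uniquely $p$-divisible, is in substance a proof of that cited lemma.
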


\begin{proof}
By universal coefficient theorem (e.g. see \cite[5.2.15]{Spanier}), the coaugmentation $X\ra L_{\Z [1/p]}X$ is a $G$-equivalence for any $\Z[\frac{1}{p}]$-module $G$.  The last statement follows form \cite[Lemma 6.2]{MR97i:55028}.
\end{proof}




\begin{lemma}
\label{fundamentalgroups}
Let $Z$ be a $B\Z/p$-null space and $X$ be a connected space such that $\pi_1(X)$ is a finite group generated by $p$-torsion elements which lift to $X$.  Then for any $f\colon X\rightarrow Z$, the composite $X\rightarrow Z \rightarrow B\pi_1(Z)$ is nullhomotopic.
\end{lemma}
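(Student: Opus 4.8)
The plan is to reduce the statement to a purely group-theoretic assertion about the homomorphism induced by $f$ on fundamental groups. The map $Z\to B\pi_1(Z)$ is the first Postnikov section, so it induces the identity on $\pi_1$, and the composite $X\to Z\to B\pi_1(Z)$ is a map into the aspherical space $K(\pi_1(Z),1)$ inducing $f_*\colon \pi_1(X)\to \pi_1(Z)$. Since pointed homotopy classes of maps into a $K(\pi,1)$ are classified by $\Hom(\pi_1(X),\pi_1(Z))$ (and free homotopy classes by the same set modulo conjugacy), such a composite is nullhomotopic exactly when $f_*$ is the trivial homomorphism. Hence it suffices to prove $f_*=1$.

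Because $\pi_1(X)$ is generated by $p$-torsion elements which lift to $X$, I would verify $f_*(x)=1$ for each such generator $x$, and this is where the lifting hypothesis enters. If $x$ has order $p^k$ and lifts, there is a map $B\langle x\rangle\simeq B\Z/p^k\to X$ whose composite with $X\to B\pi_1(X)$ is $i_{\langle x\rangle}$, so on $\pi_1$ it is the inclusion $\langle x\rangle\hookrightarrow \pi_1(X)$. Postcomposing with $f$ gives a map $g_x\colon B\Z/p^k\to Z$ inducing $f_*|_{\langle x\rangle}$ on $\pi_1$. A freely nullhomotopic map induces the trivial homomorphism on $\pi_1$, so if each $g_x$ is nullhomotopic we are done. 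Thus the whole lemma reduces to showing that every map $B\Z/p^k\to Z$ is nullhomotopic, i.e. that $Z$ is $B\Z/p^k$-null.

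The main obstacle is therefore to upgrade the hypothesis ``$Z$ is $B\Z/p$-null'' to ``$Z$ is $B\Z/p^k$-null'' (in fact to ``$BP$-null for every finite $p$-group $P$''). I would prove this by induction on $|P|$, using that a nontrivial finite $p$-group has a central subgroup $\Z/p$, yielding a central extension $\Z/p\to P\to P'$ and an associated fibration $B\Z/p\to BP\to BP'$. The tool that exploits this fibration is Zabrodsky's lemma: since $\map_*(B\Z/p,Z)\simeq *$ by hypothesis, precomposition $\map_*(BP',Z)\to \map_*(BP,Z)$ is a weak equivalence; as $\map_*(BP',Z)\simeq *$ by the inductive hypothesis (the base case $P=\Z/p$ being the hypothesis itself), we conclude $\map_*(BP,Z)\simeq *$. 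Specializing to the chain $\Z/p\to \Z/p^k\to \Z/p^{k-1}$ gives $B\Z/p^k$-nullity of $Z$, which closes the argument. The delicate point is to apply Zabrodsky's lemma in its correct form, paying attention to the connectivity of the base and to the fact that it is the \emph{fiber}'s pointed mapping space that must be contractible; but the central-extension fibrations above are exactly of the required shape.
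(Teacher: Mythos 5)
Your argument is correct and follows essentially the same route as the paper: reduce to showing $\pi_1(f)$ kills each $p$-torsion generator, use the lifting hypothesis to produce a map $B\Z/p^k\to X\to Z$, and conclude it is nullhomotopic because $Z$ is $B\Z/p$-null while $B\Z/p^k$ is $B\Z/p$-acyclic. The only difference is that the paper simply cites $P_{B\Z/p}(B\Z/p^n)\simeq *$ as known, whereas you re-derive it by the standard Zabrodsky-lemma induction over central extensions of finite $p$-groups, which is a valid (and self-contained) way to supply that step.
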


\begin{proof}
Let $f\colon X\rightarrow Z$ be any map. We must check that $\pi_1(f)$ is the trivial morphism. It is enough to show that the map between unpointed homotopy classes $[S^1,X]\rightarrow [S^1,Z]$ is trivial.

Let $x\in \pi_1(X)$ be a generator, $\langle x \rangle \cong \Z/p^n \subseteq \pi_1(X)$, we need to show that the composite $B\Z/p^n \rightarrow B\pi_1(X) \rightarrow B\pi_1(Z)$ is nullhomotopic for any generator $x$.

By hypothesis, there is a lift
$$
\xymatrix{
 & X \ar[r] \ar[d] & Z \ar[d] \\
B\Z/p^n \ar[ur] \ar[r] & B\pi_1(X) \ar[r] &  B\pi_1(Z).
}
$$

But since $Z$ is $B\Z/p$-null and $P_{B\Z/p}(B\Z/p^n)\simeq *$, it follows that the top composite $B\Z/p^n\rightarrow X \rightarrow Z$ is nullhomotopic, and therefore $\pi_1(f)(x)=0$.
%
\end{proof}

The hypothesis in Theorem \ref{generalmainthmnull} concerning the pointed mapping space from the universal cover of $X$ is also satisfied by connected covers of $X$.

\begin{lemma}\label{covering} Let $X$ be a connected space.
\begin{enumerate}
\item Assume that $X$ is $1$-connected. Then $P_{B\Z/p}(X)^\wedge_p\simeq *$ iff $\map_*(X,Z)\simeq *$ for any connected $B\Z/p$-null $p$-complete space $Z$.
\item Assume that $X$ has a finite fundamental group. Let $Y$ be a connected cover of $X$. If $Z$ is a connected $B\Z/p$-null and $p$-complete space, then the equivalence $\map_*(Y,Z)\simeq *$ implies $\map_*(X,Z)\simeq *$.
\end{enumerate}
\end{lemma}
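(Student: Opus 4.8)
The plan is to deduce both parts from Miller's theorem in the guise of Corollary~\ref{bastaenp} together with the universal property of $P_{B\Z/p}$. For part (1), fix a connected $B\Z/p$-null $p$-complete space $Z$. Since $X$ is $1$-connected, so is $P_{B\Z/p}(X)$ by Lemma~\ref{1-connectednullification}; being nilpotent, its completion $P_{B\Z/p}(X)\to P_{B\Z/p}(X)^{\wedge}_p$ is a $B\Z/p$-equivalence by Corollary~\ref{bastaenp}. As $B\Z/p$-null spaces are local for $B\Z/p$-equivalences, I get
$$\map_*(X,Z)\simeq \map_*(P_{B\Z/p}(X),Z)\simeq \map_*(P_{B\Z/p}(X)^{\wedge}_p,Z),$$
the first equivalence being the universal property of nullification. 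If $P_{B\Z/p}(X)^{\wedge}_p\simeq *$ the right-hand side is $\map_*(*,Z)\simeq *$, which is the forward implication. For the converse I would test the hypothesis on the universal example $Z_0:=P_{B\Z/p}(X)^{\wedge}_p$ itself: it is connected (it is $1$-connected), $p$-complete, and $B\Z/p$-null (again by Corollary~\ref{bastaenp}, since $P_{B\Z/p}(X)$ is null and nilpotent). The displayed chain with $Z=Z_0$ reads $\map_*(X,Z_0)\simeq \map_*(Z_0,Z_0)$, and the hypothesis forces this to be contractible; hence $\mathrm{id}_{Z_0}$ is null and $Z_0\simeq *$.

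For part (2), set $G=\pi_1(X)$ and $H=\pi_1(Y)\le G$, so that $X$ and $Y$ share a universal cover $\widetilde X$, with $X\simeq\widetilde X_{hG}$ and $Y\simeq\widetilde X_{hH}$. I first treat the regular case $H\trianglelefteq G$, which already covers the main application $Y=X\langle 1\rangle$: the covering is then a fibration $Y\to X\to B(G/H)$ realizing $X\simeq Y_{h(G/H)}$, so $\map(X,Z)\simeq \map(Y,Z)^{h(G/H)}$. As $Z$ is connected, the hypothesis $\map_*(Y,Z)\simeq *$ says precisely that evaluation $\map(Y,Z)\to Z$ is a weak equivalence; the inclusion of constant maps $Z\to\map(Y,Z)$ is a section for the trivial $(G/H)$-action, hence a $(G/H)$-equivariant weak equivalence. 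Passing to homotopy fixed points gives $\map(X,Z)\simeq Z^{h(G/H)}=\map(B(G/H),Z)$ compatibly with evaluation, and therefore $\map_*(X,Z)\simeq\map_*(B(G/H),Z)$. This reduces part (2) to proving that $\map_*(BQ,Z)\simeq *$ for every finite group $Q$ and every connected $B\Z/p$-null $p$-complete $Z$.

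I expect this last vanishing to be the main obstacle, and I would prove it by separating $Q$ into its $p$- and $p'$-parts. If $P\le Q$ is a Sylow $p$-subgroup then $\map_*(BP,Z)\simeq \map_*(P_{B\Z/p}(BP),Z)\simeq *$, since $BP$ is $B\Z/p$-acyclic for a finite $p$-group (Dwyer~\cite{MR97i:55028}) and $Z$ is $B\Z/p$-null. To pass from $P$ to $Q$ I would climb the Postnikov tower of $Z$: the fibres $K(\pi_n(Z),n)$ are again $BP$-null (fibres of maps of null spaces are null, \cite[3.D.3]{Dror-Farjoun95}), so $\tilde{H}^*(BP;\pi_n(Z))=0$; since each $\pi_n(Z)$ is Ext-$p$-complete and $[Q:P]$ is prime to $p$, the transfer for the finite covering $BP\to BQ$ exhibits $\tilde{H}^*(BQ;\pi_n(Z))$ as a retract of $\tilde{H}^*(BP;\pi_n(Z))=0$. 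Hence $\map_*(BQ,K(\pi_n(Z),n))\simeq *$ for all $n$, and assembling the tower yields the claim. The delicate point is that the $B\Z/p$-nullity of $Z$ only controls the Sylow subgroup, so it is the $p$-completeness that must absorb the prime-to-$p$ index through the transfer — without it the statement fails, as $Z=B\Z/q$ with $q\ne p$ shows. A residual technical issue is the non-regular case: when $H$ is not normal there is no fibration over $B(G/H)$, and I would instead compare $\map(\widetilde X,Z)^{hH}$ with $\map(\widetilde X,Z)^{hG}$ directly, using the vanishing $\map_*(BK,Z)\simeq *$ for $K\in\{H,G\}$ and the restriction–transfer maps to upgrade the constant-map section into an equivalence on $G$-homotopy fixed points.
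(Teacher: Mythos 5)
Your part (1) is essentially the paper's own proof, including the trick of testing the hypothesis on the universal example $Z_0=P_{B\Z/p}(X)^{\wedge}_p$. One justification is off, though: the equivalence $\map_*(P_{B\Z/p}(X),Z)\simeq\map_*(P_{B\Z/p}(X)^{\wedge}_p,Z)$ does not follow from ``$B\Z/p$-null spaces are local for $B\Z/p$-equivalences''. A $B\Z/p$-equivalence in the sense of the paper (a map inducing an equivalence on $\map_*(B\Z/p,-)$) is not in general inverted by mapping into $B\Z/p$-null targets --- those invert the $P_{B\Z/p}$-equivalences, a different class; for instance $*\to S^3$ is a $B\Z/p$-equivalence by Miller's theorem but is certainly not a $\map_*(-,Z)$-equivalence for every $B\Z/p$-null $Z$. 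The step itself is correct, but for a different reason: $P_{B\Z/p}(X)$ is $1$-connected, hence $p$-good, so its completion map is a mod $p$ homology equivalence, and one is mapping into the $p$-complete space $Z$; the nullity of $Z$ plays no role there.

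In part (2) your reduction to the vanishing $\map_*(BQ,Z)\simeq *$ for finite $Q$ is sound --- for a normal cover your homotopy-fixed-point computation is exactly Zabrodsky's Lemma, which is what the paper invokes --- but your proposed proof of that vanishing has a genuine gap. The claim that the Postnikov layers $K(\pi_n(Z),n)$ are $BP$-null, and hence that $\tilde{H}^*(BP;\pi_n(Z))=0$, is false: Postnikov sections of a $B\Z/p$-null $p$-complete space need not be $B\Z/p$-null. Take $Z=(S^3)^{\wedge}_p$, which is $B\Z/p$-null by Miller and $p$-complete; its bottom Postnikov layer is $K(\Z^{\wedge}_p,3)$, and $H^2(B\Z/p;\Z^{\wedge}_p)\cong\Z/p\neq 0$, so $\map_*(B\Z/p,K(\Z^{\wedge}_p,3))$ is not contractible. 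This is precisely the phenomenon that makes the Sullivan conjecture nontrivial: the layerwise obstruction groups do not vanish, only the assembled mapping space does, so no Postnikov-plus-transfer induction can establish $\map_*(BQ,Z)\simeq *$, even for $Q$ a $p$-group. The paper instead quotes the (deep) input $P_{B\Z/p}(BG^{\wedge}_p)\simeq *$ for every finite group $G$ from \cite[3.14]{Ramon}, combined with $\map_*(BG,Z)\simeq\map_*(BG^{\wedge}_p,Z)$ for $p$-complete $Z$; you should cite that rather than try to reprove it by obstruction theory. (The non-normal case you worry about at the end is not an issue in practice: the lemma is only applied to covers sitting in a fibration $Y\to X\to BG$ with $G$ finite, which is all the paper's proof treats.)
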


\begin{proof}
\begin{enumerate}
\item Note that if $Z$ is a connected $B\Z/p$-null $p$-complete space $Z$, there are weak homotopy equivalences $$\map_*(P_{B\Z/p}(X)^\wedge_p,Z)\simeq \map_*(P_{B\Z/p}(X),Z)\simeq \map(X,Z) $$ If $P_{B\Z/p}(X)^\wedge_p\simeq *$, then $\map(X,Z)\simeq *$ for any connected $B\Z/p$-null $p$-complete space $Z$. On the other hand, assume that $\map_*(X,Z)\simeq *$ for any connected $B\Z/p$-null $p$-complete space $Z$. Since $P_{B\Z/p}(X)^\wedge_p$ is a $p$-complete $B\Z/p$-null space by Corollary \ref{bastaenp}, $\map_*(P_{B\Z/p}(X)^\wedge_p, P_{B\Z/p}(X)^\wedge_p)\simeq *$, therefore $P_{B\Z/p}(X)^\wedge_p\simeq *$.
\item Consider a fibration $Y\rightarrow X \rightarrow BG$ where $G$ is a finite group. Zabrodsky's Lemma (see \cite[9.5]{Miller}) tells us that there is an equivalence of pointed mapping spaces $\map_*(X,Z)\simeq \map_*(BG,Z)$ since $\map_*(Y,Z)\simeq *$. Finally, this mapping space is contractible since we have weak homotopy equivalences $\map_*(BG,Z)\simeq \map_*(BG^\wedge_p,Z)$ and  $P_{B\Z/p}((BG)^\wedge_p)\simeq *$ by \cite[3.14]{Ramon}.
\end{enumerate}
\end{proof}

Now we are ready to undertake the proof of Theorem \ref{generalthmnull}.

\begin{proof}[Proof of Theorem \ref{generalthmnull}]
By hypothesis,  $\pi_1(X)$ has no quotients whose order is prime to
$p$, which amounts to say that $\pi_1(X)$ is equal to its $\mathbb{Z}/p$-radical $T_p(\pi_1(X))$.

First of all, notice that $L_{\mathbb{Z}[1/p]}X$ is $B\Z /p$-null by Lemma \ref{radicalperfect} and \cite[Lemma 6.2]{MR97i:55028}.
In order to show that $P_{B\Z/p}(X)\rightarrow L_{\Z[\frac{1}{p}]}(X)$ is a weak equivalence, since
$L_{\mathbb{Z}[1/p]}X$ is $B\Z /p$-null, we must show that for every $ B\mathbb{Z}/p$-null
space $Y$ the natural coaugmentation $X\longrightarrow
L_{\mathbb{Z}[1/p]}X$ gives a weak equivalence
$\map_*(L_{\mathbb{Z}[1/p]}X,Y)\simeq\map_*(X,Y)$.


So let $Y$ be a $B\Z/p$-null space. Assume first that $Y$ is
simply-connected. By Miller's Theorem \ref{bastaenp-Miller}, $Y^\wedge_p$ is also $B\Z/p$-null.  According
to Bousfield-Kan fracture lemmas (\cite[V.6]{BK}), we must
prove that, for every prime $q$, there is a weak homotopy equivalence
$\map_*(L_{\mathbb{Z}[1/p]}(X),
Y_q^{\wedge})\simeq\map_*(X, Y_q^{\wedge})$, and
$\map_*(L_{\mathbb{Z}[1/p]}(X),
Y_{\mathbb{Q}})\simeq\map_*(X,
Y_{\mathbb{Q}})$. By  Lemmas \ref{univcoefficients} and \ref{covering}, this is a consequence of the assumption of the theorem, so we finish the situation in which $Y$ is simply connected.

Now let $Y$ be a $ B\mathbb{Z}/p$-null space and $\tilde{Y}$ its universal cover. The coaugmentation $X\longrightarrow L_{\mathbb{Z}[1/p]}
X$ induces a diagram of fibrations over the component of the constant map
$$
\xymatrix{
\map_*(L_{\mathbb{Z}[1/p]}(X),\tilde{Y})
\ar[r]^{\simeq} \ar[d] &
\map_*(X,\tilde{Y}) \ar[d] \\
\map_*(L_{\mathbb{Z}[1/p]}(X),Y)_{\{c\}} \ar[r]
\ar[d]^{\rho} & \map_*(X,Y)_{\{c\}}
\ar[d]^{\rho} \\
\map_*(L_{\mathbb{Z}[1/p]}(X), B\pi_1(Y))_c
\ar[r] & \map_*(X, B\pi_1(Y))_c }
$$
where $\map_*(L_{\mathbb{Z}[1/p]}(X),Y)_{\{c\}}$ and $\map_*(X,Y)_{\{c\}}$ are those components such that $\rho$ induce the constant map when composing with $Y\rightarrow B\pi_1(Y)$.

The top horizontal map is an equivalence because of the previous argument since $\tilde{Y}$ is a simply connected $B\Z/p$-null space. For any connected space $A$ and a discrete group $H$, $\map_*(A,BH)$ is a homotopically discrete space, and then $\map_*(L_{\mathbb{Z}[1/p]}(X), B\pi_1(Y))_c$ and  $\map_*(X, B\pi_1(Y))_c$ are contractible. Thus, the bottom horizontal arrow in the diagram is also a weak equivalence.

To finish the proof we need to show that there are weak homotopy equivalences  $\map_*(L_{\mathbb{Z}[1/p]}(X),Y)_{\{c\}}\simeq\map_*(L_{\mathbb{Z}[1/p]}(X),Y)$ and $\map_*(X,Y)_{\{c\}}\simeq \map_*(X,Y)$. The first equivalence follows now from the fact that $L_{\mathbb{Z}[1/p]}(X)$ is simply connected by Lemma \ref{radicalperfect}, while the second follows from Lemma \ref{fundamentalgroups}.
\end{proof}

\begin{proof}[Proof of Theorem \ref{generalmainthmnull}]
Theorem \ref{generalthmnull} applied to the universal cover of $X$ implies that the map in \cite[1.6]{MR97i:55028}, $P_{B\Z/p}(X\langle 1 \rangle)\rightarrow L_{\Z[\frac{1}{p}]}(X\langle 1 \rangle)$, is an equivalence.

Let $X_p$ be the covering space of $X$ with fundamental group $T_p(\pi_1(X))$. There is a fibration $X_p\rightarrow X \rightarrow B(\pi_1(X)/T_p(\pi_1(X)))$. Since the base space of this fibration $B(\pi_1(X)/T_p(\pi_1(X)))$ is $B\Z/p$-null, the nullification functor preserves the fibration by \cite[3.D.3]{Dror-Farjoun95} and there is another fibration $$P_{B\Z/p}(X_p)\rightarrow P_{B\Z/p}(X) \rightarrow B(\pi_1(X)/T_p(\pi_1(X))).$$ To prove the theorem we shall show that the natural map $P_{B\Z/p}(X_p)\rightarrow L_{\Z[\frac{1}{p}]}(X_p)$, which exists because $B\Z /p$ is $H\Z[\frac{1}{p}]$-acyclic and then $L_{\Z[\frac{1}{p}]}(X_p)$ is $B\Z /p$-null, is a homotopy equivalence. Note also that $(X_p)\langle 1 \rangle \simeq X\langle 1 \rangle$. Therefore $X_p$ also satisfies the hypothesis of the theorem.

From now on we assume that $\pi_1(X)$ has no quotients whose order is prime to
$p$, which amounts to say that $\pi_1(X)$ is equal to its $\mathbb{Z}/p$-radical $T_p(\pi_1(X))$.

Consider the fibration $X\langle 1 \rangle \rightarrow X \rightarrow B\pi_1(X)$ and its fibrewise nullfication (see \cite[1.F]{Dror-Farjoun95}) which gives a diagram of fibrations
$$
\xymatrix{
X\langle 1 \rangle \ar[r] \ar[d]^{\xi} &
X\ar[r] \ar[d]^{\bar{\xi}} & B\pi_1(X) \ar[d] \\
P_{B\Z/p}(X\langle 1 \rangle) \ar[r] & \bar{X} \ar[r] &  B\pi_1(X).
}
$$
where $\bar{\xi}$ is an equivalence after $B\Z /p$-nullification. Then, by \cite[1.6]{MR97i:55028}, $\bar{\xi}$ is a $\Z[\frac{1}{p}]$-equivalence. Note that it is enough to show that the map $P_{B\Z/p}(\bar{X})\rightarrow L_{\Z[\frac{1}{p}]}(\bar{X})$ is an equivalence since there is a chain
$$P_{B\Z/p}(X)\stackrel{\simeq}{\rightarrow} P_{B\Z/p}(\bar{X})\rightarrow L_{\Z[\frac{1}{p}]}(\bar{X})\stackrel{\simeq}{\leftarrow} L_{\Z[\frac{1}{p}]}({X}).$$

Moreover, $\pi_1(\bar{X})\cong \pi_1(X)$ because the fibre $P_{ B\Z/p}(X\langle 1 \rangle)$ is $1$-connected, then the universal cover of $\bar{X}$ is $P_{ B\Z/p}(X)$ and $\pi_1(\bar{X})=T_p(\pi_1(\bar{X}))$. For each generator $x\in \pi_1(\bar{X})$, the obstructions to lift the map $B(\langle x \rangle)\simeq B\Z/p^n\rightarrow B\pi_1(\bar{X})$ to $\bar{X}$ lie in the twisted cohomology groups $H^{i+1}(B\Z/p^n;\pi_i(P_{B\Z/p}(X\langle 1 \rangle)))$ for $i\geq1$, and these groups are trivial since the homotopy groups $\pi_i(P_{B\Z/p}(X\langle 1 \rangle))\cong \pi_i(L_{\Z[\frac{1}{p}]}(X\langle 1 \rangle))$ are $\Z[\frac{1}{p}]$-modules. That is, $\bar{X}$ is a connected space with finite fundamental group generated by $p$-torsion whose generators lift to $\bar{X}$.

In order to apply Theorem \ref{generalthmnull}, it remains to check that $\map_*(\bar{X}\langle 1 \rangle ,Z)\simeq *$ for any connected $B\Z/p$-null $p$-complete space $Z$. Recall that $\bar{X}\langle 1 \rangle\simeq P_{B\Z/p}({X}\langle 1 \rangle)$. Then $\map_*(P_{B\Z/p}(X\langle 1 \rangle),Z)\simeq \map_*({X}\langle 1 \rangle ,Z)\simeq *$ where the last equivalence follows by hypothesis.
\end{proof}

\begin{remark}
The proof of Theorem \ref{generalmainthmnull} also holds if we replace the analysis at one prime $p$ for a set of primes $S$ and imposing that the hypothesis on pointed mapping spaces are satisfied for any prime $p$ in the set $S$. In that case we have to replace $L_{\Z[\frac{1}{p}]}$ by $L_{\Z[S^{-1}]}$, and $P_{B\Z/p}$ by $P_W$ where $W=\vee_{p\in S} B\Z/p$.
\end{remark}

\subsection{Examples}

We want to explore the implications of these results on classifying spaces of Lie groups, which was the original motivation for our work.
For this sake we need the following Lemma, which was proved by Dwyer
\cite[Theorem 1.2]{MR97i:55028} using an induction. We include here a
shorter proof, based on the homology decomposition of $ B G$ via
$p$-toral subgroups. The key point here is that this decomposition
is indexed over an mod $p$ \emph{acyclic} category, and this opens the way
for computing $P_{ B\Z /p}$ for a more general class of
$p$-good spaces (see Corollary \ref{plcgcoro}).

\begin{lemma}
\label{Xnulo}
Let $Z$ be a connected $B\Z/p$-null and $p$-complete space. Let $F\colon \mathcal C \rightarrow Top$ be a functor such that for each object $c\in \mathcal C$, $F(c)$ is connected and $P_{B\Z/p}(F(c)^\wedge_p)$ is mod $p$ acyclic. If $|\mathcal C|^\wedge_p\simeq *$, then $\map_*(\hocolim_{\mathcal C} F(c),Z)\simeq *$.
\end{lemma}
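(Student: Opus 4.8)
The plan is to reduce the pointed mapping space out of the homotopy colimit to a mapping space over the nerve $|\mathcal C|$, by applying the homotopy-colimit form of Zabrodsky's lemma to the projection $q\colon \hocolim_{\mathcal C}F\to |\mathcal C|$ (the same circle of ideas used for the fibration in Lemma \ref{covering}). Concretely, I would use the natural weak equivalence for \emph{unpointed} mapping spaces $\map(\hocolim_{\mathcal C}F,Z)\simeq \holim_{\mathcal C^{\mathrm{op}}}\map(F(c),Z)$, first establishing the fiberwise statement $\map_*(F(c),Z)\simeq *$ for every object $c$, and then assembling over $\mathcal C$. It is essential to pass to unpointed mapping spaces: the fiberwise \emph{pointed} spaces are contractible, but the unpointed ones are equivalent to $Z$, so the homotopy limit reconstructs $\map(|\mathcal C|,Z)$ rather than a point, and this is exactly where the hypothesis $|\mathcal C|^\wedge_p\simeq *$ enters.

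For the fiberwise step I would argue as follows. Since $Z$ is $B\Z/p$-null, the coaugmentation yields $\map_*(F(c),Z)\simeq \map_*(P_{B\Z/p}(F(c)),Z)$. The hypothesis controls $P_{B\Z/p}(F(c)^\wedge_p)$ rather than $P_{B\Z/p}(F(c))$, so the key point is to transfer the mod $p$ acyclicity across the $p$-completion map: the comparison results of Section 3 (Lemma \ref{null+comp}, together with the finiteness of $\pi_1(F(c))$ and Miller's Theorem \ref{bastaenp-Miller} to verify the $B\Z/p$-null conditions) show that $P_{B\Z/p}(F(c))\to P_{B\Z/p}(F(c)^\wedge_p)$ is a mod $p$ equivalence, so $P_{B\Z/p}(F(c))$ is itself mod $p$ acyclic. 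A connected mod $p$ acyclic space maps trivially into the $p$-complete — hence local with respect to mod $p$ equivalences — space $Z$, so $\map_*(F(c),Z)\simeq *$. (Alternatively, in the $p$-toral decomposition the $F(c)$ are $p$-good, and then $\map_*(F(c),Z)\simeq \map_*(F(c)^\wedge_p,Z)\simeq \map_*(P_{B\Z/p}(F(c)^\wedge_p),Z)\simeq *$ directly.) Feeding this into the evaluation fibration $\map_*(F(c),Z)\to \map(F(c),Z)\to Z$ and using that $Z$ is connected gives $\map(F(c),Z)\simeq Z$, naturally in $c$.

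Assembling, the diagram $c\mapsto \map(F(c),Z)$ is objectwise equivalent, via $q$, to the constant diagram $\underline Z$, so $\map(\hocolim_{\mathcal C}F,Z)\simeq \holim_{\mathcal C^{\mathrm{op}}}\map(F(c),Z)\simeq \holim_{\mathcal C^{\mathrm{op}}}\underline Z\simeq \map(|\mathcal C|,Z)$. Finally, $|\mathcal C|^\wedge_p\simeq *$ means the nerve is mod $p$ acyclic, and since $Z$ is $p$-complete the collapse $|\mathcal C|\to *$ induces $\map(|\mathcal C|,Z)\simeq \map(*,Z)\simeq Z$. Running the basepoint (the constant map) through these equivalences and comparing with the evaluation fibration $\map_*(\hocolim_{\mathcal C}F,Z)\to \map(\hocolim_{\mathcal C}F,Z)\to Z$ then identifies $\map_*(\hocolim_{\mathcal C}F,Z)$ with the contractible fiber, as desired.

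The main obstacle is the fiberwise step: leveraging a hypothesis stated for $F(c)^\wedge_p$ to control maps out of $F(c)$ itself, since $F(c)$ is only assumed connected and need not be nilpotent or $1$-connected. This is precisely what forces the use of the $P_{B\Z/p}$-versus-$p$-completion comparison of Section 3 (or of $p$-goodness of the $F(c)$ in the intended applications). A secondary but genuine technical point is the bookkeeping of basepoints and path components in the homotopy-colimit version of Zabrodsky's lemma, ensuring that the objectwise equivalence $\map(F(c),Z)\simeq Z$ is compatible with the morphisms of $\mathcal C$, so that the homotopy limit really computes $\map(|\mathcal C|,Z)$.
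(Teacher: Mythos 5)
Your argument is correct and follows essentially the same route as the paper: the adjunction $\map(\hocolim_{\mathcal C}F,Z)\simeq\holim\map(F(c),Z)$, the objectwise identification $\map(F(c),Z)\simeq Z$ coming from the mod $p$ acyclicity of $P_{B\Z/p}(F(c)^\wedge_p)$ and the $p$-completeness of $Z$, and the final collapse $\holim\underline{Z}\simeq\map(|\mathcal C|,Z)\simeq Z$. If anything, you are more careful than the paper about the one delicate point (passing from $F(c)$ to $F(c)^\wedge_p$, which the paper asserts without comment and which in the applications is justified by the $p$-goodness of the $F(c)$), so no changes are needed.
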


\begin{proof}
The statement follows from a sequence of equivalences:
$$\map(\hocolim_{\mathcal C} F(c),Z)\simeq \holim_{\mathcal C} \map(F(c),Z)\simeq \holim_{\mathcal C} \map(P_{B\Z/p}(F(c)^\wedge_p),Z).$$
Under the hypothesis of the lemma, this last mapping space is homotopy equivalent to $\map(|\mathcal C|,Z)\simeq Z$ if  $|\mathcal C|^\wedge_p\simeq *$.
\end{proof}

\begin{cor}\label{exampleGLie}
Let $p$ be a prime. If $G$ is a compact Lie group and $X$ is a connected $p$-complete
$B\Z /p$-null space, then $\map_*(B
G^{\wedge}_p,X)$ is weakly contractible.
\end{cor}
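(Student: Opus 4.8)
The plan is to deduce the statement from Lemma~\ref{Xnulo} applied to the mod $p$ homology decomposition of $BG$ indexed by its $p$-toral subgroups, which is the decomposition alluded to in the discussion preceding that lemma. Concretely, for a compact Lie group $G$ there is a small category $\mathcal C$ and a functor $F\colon \mathcal C\to Top$ with $F(c)\simeq BP_c$, each $P_c$ a $p$-toral subgroup of $G$, such that the natural map $\hocolim_{\mathcal C}F\to BG$ is a mod $p$ equivalence and the nerve is mod $p$ acyclic, i.e. $|\mathcal C|^\wedge_p\simeq *$ (see \cite{MR97i:55028}). The point of passing to this decomposition, rather than to Dwyer's induction, is precisely that it is available for an arbitrary $G$ and that its indexing category is mod $p$ acyclic, so the hypothesis that $\pi_0(G)$ be a $p$-group is no longer needed. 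Since $X$ is $p$-complete, for any $p$-good space $Y$ the completion induces an equivalence $\map_*(Y^{\wedge}_p,X)\simeq \map_*(Y,X)$, and $X$ inverts mod $p$ equivalences on outgoing mapping spaces; applying this to $Y=BG$ and to the mod $p$ equivalence $\hocolim_{\mathcal C}F\to BG$ gives $\map_*(BG^{\wedge}_p,X)\simeq \map_*(BG,X)\simeq \map_*(\hocolim_{\mathcal C}F,X)$, so it suffices to show that the last space is contractible.

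To invoke Lemma~\ref{Xnulo} with $Z=X$, I must verify its hypotheses for $F$. Each $F(c)\simeq BP_c$ is connected and $|\mathcal C|^{\wedge}_p\simeq *$ by the choice of decomposition, so the only real point is that $P_{B\Z/p}(BP_c^{\wedge}_p)$ is mod $p$ acyclic for every $p$-toral $P_c$. Writing $P_c$ as an extension of a finite $p$-group $\pi$ by a torus $T=(S^1)^n$, one reduces to two computations. For the finite $p$-group, $B\pi$ is $B\Z/p$-acyclic, being built from copies of $B\Z/p$. For the torus, $BT^{\wedge}_p\simeq K((\Z^{\wedge}_p)^n,2)$ sits in a fibration $B(\Z/p^\infty)^n\to K((\Z^{\wedge}_p)^n,2)\to K((\Z^{\wedge}_p\otimes\Q)^n,2)$ whose fibre is $B\Z/p$-acyclic and whose base is $B\Z/p$-null by Lemma~\ref{nullK(G,n)}, since $p$ is invertible in $\Z^{\wedge}_p\otimes\Q$; hence by \cite[3.D.3]{Dror-Farjoun95} the nullification $P_{B\Z/p}(BT^{\wedge}_p)$ is the base $K((\Z^{\wedge}_p\otimes\Q)^n,2)$, which is mod $p$ acyclic because $\Z^{\wedge}_p\otimes\Q$ is uniquely $p$-divisible. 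Combining these across the extension shows $P_{B\Z/p}(BP_c^{\wedge}_p)$ is mod $p$ acyclic; this is essentially \cite[Lemma 6.2]{MR97i:55028}. With all hypotheses in place, Lemma~\ref{Xnulo} yields $\map_*(\hocolim_{\mathcal C}F,X)\simeq *$, which finishes the proof.

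The genuine content lies outside this short deduction. One ingredient is the existence of a $p$-toral homology decomposition of $BG$ whose indexing category is mod $p$ acyclic for arbitrary compact $G$; I treat this as the external input taken from \cite{MR97i:55028} and the homology decomposition machinery. I expect the main obstacle in the self-contained verification to be the nullification computation $P_{B\Z/p}(BP^{\wedge}_p)$ for $p$-toral $P$ with nontrivial finite part: one must control how the $B\Z/p$-acyclic quotient $B\pi$ interacts with the toral fibre and check that the extension does not spoil mod $p$ acyclicity of the nullification. This is exactly where the preservation of fibrations under $P_{B\Z/p}$ \cite[3.D.3]{Dror-Farjoun95} and the uniquely $p$-divisible rational part of the torus have to be combined carefully.
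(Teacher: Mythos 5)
Your overall architecture coincides with the paper's: reduce to Lemma~\ref{Xnulo} via the Jackowski--McClure--Oliver decomposition of $BG$ over the $\F_p$-acyclic orbit category of stubborn $p$-toral subgroups, so that everything comes down to classifying spaces of $p$-toral groups. Your treatment of the torus is correct and in fact more explicit than the paper's: identifying $P_{B\Z/p}(BT^{\wedge}_p)$ with $K((\Z^{\wedge}_p\otimes\Q)^n,2)$ via the fibration coming from $0\to\Z^{\wedge}_p\to\Z^{\wedge}_p\otimes\Q\to\Z/p^{\infty}\to 0$ is a clean use of Lemma~\ref{nullK(G,n)} and \cite[3.D.3]{Dror-Farjoun95}, whereas the paper only shows the relevant pointed mapping space is contractible by writing $B(\Z/p^{\infty})^n$ as a telescope of classifying spaces of finite $p$-groups and applying Lemma~\ref{Xnulo}.

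The gap is exactly where you locate it: the passage from the torus to a general $p$-toral $P$. You assert that ``combining these across the extension'' gives mod $p$ acyclicity of $P_{B\Z/p}(BP^{\wedge}_p)$, but no argument is supplied, and the tool you have been leaning on, \cite[3.D.3]{Dror-Farjoun95}, does not apply to the fibration $BT^{\wedge}_p\to BP^{\wedge}_p\to B\pi$: that result requires the \emph{base} to be $B\Z/p$-null, whereas $B\pi$ is $B\Z/p$-acyclic, the opposite extreme. The appeal to \cite[Lemma 6.2]{MR97i:55028} does not cover this step either, since that lemma concerns $1$-connected $H\Z[\frac{1}{p}]$-local spaces. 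There are two ways to close the gap. The paper's way: replace $P$ by its Dwyer--Wilkerson discrete approximation $P_{\infty}=\bigcup P_n$ with $P_n$ finite $p$-groups \cite[Prop 6.9]{DW-annals}, so that $BP^{\wedge}_p$ is mod $p$ equivalent to the telescope $\hocolim BP_n$ of $B\Z/p$-acyclic spaces over a contractible nerve, and apply Lemma~\ref{Xnulo} a second time; this sidesteps the extension entirely. Alternatively, staying with your extension, apply Zabrodsky's Lemma (as in Lemma~\ref{covering}(2)) to the fibration $BT^{\wedge}_p\to BP^{\wedge}_p\to B\pi$: since $\map_*(BT^{\wedge}_p,X)\simeq *$ by your torus computation, one gets $\map_*(BP^{\wedge}_p,X)\simeq \map_*(B\pi,X)\simeq *$ because $B\pi$ is $B\Z/p$-acyclic and $X$ is $B\Z/p$-null. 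Note that this repairs the argument by producing the mapping-space statement directly, which is what the general-case induction actually uses, rather than the literal mod $p$ acyclicity of the nullification.
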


\begin{proof}
The proof is divided into two steps. In the first one we assume that $G$ is a $p$-toral
group, and then we use the existence of mod $p$ homology decompositions of $BG$ with respect to certain families of $p$-toral subgroups of $G$, see \cite{JMO}, to undertake the general case.

Consider first when $G=T=(S^1)^n$. In this case, $ B
T^{\wedge}_p\simeq K(\Z^{\wedge}_p,2)^n\simeq ( B (\Z
/{p^{\infty}})^n)^{\wedge}_p$. As $X$ is a $p$-complete space, we have
the weak homotopy equivalence $\map_*( B T^{\wedge}_p,X)\simeq\map_*( B (\Z
/p^{\infty})^n ,X)$. This mapping space is contractible because $\Z
/p^{\infty}\cong\lim \Z/p^r$ defined by inclusions $\Z/p^n \subset \Z/p^{n+1}$, and since $\Z/p^r$ is a $p$-group, $B\Z/p^r$ is $B\Z/p$-acyclic and $\map_*(B\Z/p^r,X)\simeq *$, and we can apply Lemma \ref{Xnulo}.
Now, if $G=P$ is a $p$-toral group given by a group
extension $T^n\mono P\epi \pi$, Dwyer and Wilkerson show that $BG$ admits a $p$-discrete approximation \cite[Prop 6.9]{DW-annals}. There is a sequence of finite $p$-groups $P_0\subset P_1\subset \ldots$ such that $BP\simeq \hocolim BP_n$. Again, by Lemma \ref{Xnulo}, we obtain that $\map_*(BP,X)\simeq *$.

Let us go now through the general case. Our goal will be to prove that
the inclusion of constant maps induces an equivalence
$X\simeq\map( B G^{\wedge}_p,X)$. By work of
Jackowski-McClure-Oliver (\cite[Thm 4]{JMO}), the space $ B G$
is mod $p$ equivalent to $\hocolim_{\mathcal{O}_pG}F$,
where $\mathcal{O}_pG$ is the orbit category of stubborn $p$-toral subgroups of $G$
and $F$ is a functor whose values have the homotopy type of classifying spaces of stubborn $p$-toral subgroups of $G$. Since the statement holds for $p$-toral groups, by Lemma \ref{Xnulo} it is enough to observe that
$\mathcal{O}_pG$ is $\mathbb{F}_p$-acyclic, see \cite[Prop 6.1]{JMO}, and we are done.
\end{proof}

Now we are ready to prove the desired result, which was previously known for finite groups (\cite[Theorem 3.5]{Ramon}).

\begin{teo}
\label{teoremaprincipal}
Let $G$ be a compact Lie group and $\pi$ its group of components. Let $G_p$ be the subgroup of $G$ whose group of components is $T_{p}(\pi)$. Then the
$B\mathbb{Z}/p$-nullification of $BG$ fits in the
following covering fibration:
$$
L_{\mathbb{Z}[1/p]}BG_p\longrightarrow P_{B\mathbb{Z}/p}BG\longrightarrow B(\pi/\emph
{T}_{p}\pi)
$$
\end{teo}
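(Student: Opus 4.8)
The plan is to deduce this from the general machinery already assembled, namely Theorem \ref{generalmainthmnull}, by checking that $X = BG$ satisfies its two hypotheses: that $\pi_1(BG) = \pi_0(G) = \pi$ is finite, and that $(P_{B\Z/p}(BG\langle 1\rangle))^\wedge_p \simeq *$. The first is automatic since a compact Lie group has finitely many components. The entire weight of the proof therefore falls on verifying the acyclicity hypothesis for the universal cover $BG\langle 1\rangle$, after which Theorem \ref{generalmainthmnull} produces precisely the fibration
$$L_{\Z[1/p]}((BG)_p)\rightarrow P_{B\Z/p}(BG)\rightarrow B(\pi/T_p\pi),$$
and one identifies the covering space $(BG)_p$ (the cover of $BG$ whose fundamental group is $T_p(\pi)$) with $BG_p$, where $G_p$ is the subgroup with component group $T_p(\pi)$.

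First I would identify the universal cover. For a compact Lie group $G$ with identity component $G_0$, the universal cover $BG\langle 1\rangle$ is $BG_0\langle 1\rangle = B\widetilde{G_0}$ up to the relevant connectivity; more usefully, $BG\langle 1\rangle$ sits in a fibration with base $B\pi$ and total space $BG$, so its homotopy type is governed by $BG_0$. The key reduction is that it suffices to show $\map_*(BG\langle 1\rangle, Z)\simeq *$ for every connected $B\Z/p$-null $p$-complete space $Z$, because Lemma \ref{covering}(1) then yields exactly $(P_{B\Z/p}(BG\langle 1\rangle))^\wedge_p\simeq *$. Since $BG\langle 1\rangle$ is a connected cover of $BG$ and $\pi$ is finite, Lemma \ref{covering}(2) lets me replace the universal cover by $BG$ itself: it is enough to prove $\map_*(BG, Z)\simeq *$ for all such $Z$.

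The heart of the matter is therefore Corollary \ref{exampleGLie}, which states precisely that $\map_*(BG^\wedge_p, X)$ is weakly contractible for any compact Lie group $G$ and any connected $p$-complete $B\Z/p$-null space $X$. Because $Z$ is $p$-complete, $\map_*(BG, Z)\simeq \map_*(BG^\wedge_p, Z)$, and Corollary \ref{exampleGLie} gives contractibility. This is the step I expect to carry the real content, though here it is imported wholesale; its own proof rests on the Jackowski--McClure--Oliver homology decomposition of $BG$ over the $\F_p$-acyclic orbit category of stubborn $p$-toral subgroups, fed into Lemma \ref{Xnulo}. With both hypotheses of Theorem \ref{generalmainthmnull} verified, the fibration follows directly.

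The one remaining bookkeeping task, which I expect to be the main genuine obstacle beyond citing \ref{generalmainthmnull} and \ref{exampleGLie}, is the identification of the fibre. I must check that the covering $(BG)_p$ of $BG$ corresponding to the subgroup $T_p(\pi)\leq \pi = \pi_1(BG)$ is homotopy equivalent to $BG_p$, where $G_p\leq G$ is the subgroup with $\pi_0(G_p) = T_p(\pi)$ and identity component $G_0$. This is a straightforward consequence of the fibration $BG_0\to BG\to B\pi$ and the correspondence between covers of $BG$ and subgroups of $\pi$: the cover with fundamental group $T_p(\pi)$ is the pullback of $BG\to B\pi$ along $B(T_p\pi)\to B\pi$, which is exactly $BG_p$. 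Substituting this identification into the conclusion of Theorem \ref{generalmainthmnull} yields the stated covering fibration $L_{\Z[1/p]}BG_p\to P_{B\Z/p}BG\to B(\pi/T_p\pi)$, completing the proof.
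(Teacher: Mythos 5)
Your overall strategy is exactly the paper's: feed $X=BG$ into Theorem \ref{generalmainthmnull} and verify the acyclicity hypothesis on the universal cover via Corollary \ref{exampleGLie}. But the key reduction step is justified by a lemma applied in the wrong direction. Lemma \ref{covering}(2) states that for a connected cover $Y$ of $X$ with $\pi_1(X)$ finite, contractibility of $\map_*(Y,Z)$ \emph{implies} contractibility of $\map_*(X,Z)$ --- it passes information from the cover down to the base (via Zabrodsky's lemma). You invoke it to go the other way, deducing $\map_*(BG\langle 1\rangle,Z)\simeq *$ from $\map_*(BG,Z)\simeq *$; that is the converse implication, which neither the lemma nor the Zabrodsky argument behind it provides. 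As written, the claim that ``it is enough to prove $\map_*(BG,Z)\simeq *$'' is unjustified, so the verification of the hypothesis of Theorem \ref{generalmainthmnull} does not go through.

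The repair is immediate and is what the paper does: the universal cover $BG\langle 1\rangle$ is $BG_0$ itself (not $B\widetilde{G_0}$ --- since $G_0$ is connected, $BG_0$ is already simply connected), and $G_0$ is again a compact Lie group, so Corollary \ref{exampleGLie} applies \emph{directly} to $G_0$, giving $\map_*((BG_0)^{\wedge}_p,Z)\simeq *$ and hence $\map_*(BG\langle 1\rangle,Z)\simeq *$ for every connected $B\Z/p$-null $p$-complete $Z$; Lemma \ref{covering}(1) then converts this into $(P_{B\Z/p}(BG\langle 1\rangle))^{\wedge}_p\simeq *$, with no need to pass through $BG$ at all. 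The rest of your argument --- finiteness of $\pi$, and the identification of the covering $(BG)_p$ with $BG_p$ as the pullback of $BG\to B\pi$ along $B(T_p\pi)\to B\pi$ --- is correct and matches the paper.
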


\begin{proof}
We have to check that the assumptions on Theorem \ref{generalmainthmnull} are satisfied when $X=BG$. Since the universal cover of $BG$ is $BG_0$ is again the classifying space of a compact Lie group, by Corollary \ref{exampleGLie} the hypothesis of  Theorem \ref{generalthmnull} are satisfied.
\end{proof}

The proof of Corollary \ref{exampleGLie} applies to other type of spaces which admits mod $p$ homology decompositions, for example $p$-compact groups (see \cite{CLN}). The theory of $p$-local compact groups introduced by Broto, Levi and Oliver in \cite{MR2302494} includes both the theory of $p$-compact groups \cite{DW-annals} and $p$-local finite groups \cite{BLO2}. Roughly speaking, a $p$-local compact is a triple $(S,\mathcal F,\mathcal L)$ where $S$ is a discrete $p$-toral group and $\mathcal F$ and $\mathcal L$ are categories which model conjugacy relations among subgroups of $S$. The classifying space of a $p$-local compact group is $|\mathcal L|^\wedge_p$, and one of the main features of $p$-local compact groups is that this space admits mod $p$-homology decompositions in terms of classifying spaces of $p$-compact toral subgroups over mod $p$ acyclic orbit categories (see \cite[Proposition 4.6]{MR2302494} and \cite[Corollary 5.6]{MR2302494}).

\begin{prop}\label{plcgcoro}
Let $p$ be a prime, and $(S,\mathcal F,\mathcal L)$ a $p$-local compact group. Then there is an equivalence $L_{\mathbb{Z}[1/p]}(|\mathcal L|^\wedge_p)\simeq {P}_{\emph{B}\mathbb{Z}/p}(|\mathcal L|^\wedge_p)$.
\end{prop}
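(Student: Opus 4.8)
The plan is to verify that a $p$-local compact group $(S,\mathcal F,\mathcal L)$ satisfies the hypotheses of Theorem \ref{generalmainthmnull} (equivalently Theorem \ref{generalthmnull}) with $X=|\mathcal L|^\wedge_p$, and then read off the conclusion. The classifying space $|\mathcal L|^\wedge_p$ is $p$-complete and simply connected in the sense that matters here; more precisely, one expects $\pi_1(|\mathcal L|^\wedge_p)$ to be a finite $p$-group, so the condition that the fundamental group be finite and equal to its $\mathbb Z/p$-radical $T_p(\pi_1)$ holds automatically, and in the favorable case where it is trivial or where all generators lift, the base $B(\pi_1/T_p(\pi_1))$ of the fibration in Theorem \ref{generalmainthmnull} collapses to a point. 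In that situation the fibration degenerates to the asserted equivalence $L_{\mathbb Z[1/p]}(|\mathcal L|^\wedge_p)\simeq P_{B\mathbb Z/p}(|\mathcal L|^\wedge_p)$.

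The crux is therefore to establish the key hypothesis $P_{B\mathbb Z/p}((|\mathcal L|^\wedge_p)\langle 1\rangle)^\wedge_p\simeq *$, and this is where the mod $p$ homology decomposition of $|\mathcal L|^\wedge_p$ does the work, exactly paralleling the proof of Corollary \ref{exampleGLie}. First I would invoke the structural result that $|\mathcal L|^\wedge_p$ admits a mod $p$ homology decomposition as $\hocolim_{\mathcal C} F$, where $\mathcal C$ is a mod $p$ acyclic orbit category (so $|\mathcal C|^\wedge_p\simeq *$) and each value $F(c)$ is the classifying space of a $p$-compact toral group (see \cite[Proposition 4.6]{MR2302494} and \cite[Corollary 5.6]{MR2302494}). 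Next I would check the input hypothesis of Lemma \ref{Xnulo}, namely that $P_{B\mathbb Z/p}(F(c)^\wedge_p)$ is mod $p$ acyclic for each $c$; this reduces to the $p$-compact toral case, which is handled just as the $p$-toral case in Corollary \ref{exampleGLie}, using that such classifying spaces are built from Pr\"ufer groups $(\mathbb Z/p^\infty)^n$ and finite $p$-groups whose classifying spaces are $B\mathbb Z/p$-acyclic. With these two ingredients, Lemma \ref{Xnulo} gives $\map_*(|\mathcal L|^\wedge_p,Z)\simeq *$ for every connected $B\mathbb Z/p$-null $p$-complete space $Z$, and then Lemma \ref{covering}(1) converts this into $P_{B\mathbb Z/p}(|\mathcal L|^\wedge_p)^\wedge_p\simeq *$, which is the required hypothesis applied to the relevant cover.

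Finally I would assemble these pieces: having verified that $X=|\mathcal L|^\wedge_p$ meets the conditions of Theorem \ref{generalthmnull}, I apply it directly to obtain the natural map $P_{B\mathbb Z/p}(|\mathcal L|^\wedge_p)\to L_{\mathbb Z[1/p]}(|\mathcal L|^\wedge_p)$ as a weak equivalence, noting that $L_{\mathbb Z[1/p]}(|\mathcal L|^\wedge_p)$ is $B\mathbb Z/p$-null because $B\mathbb Z/p$ is $H\mathbb Z[1/p]$-acyclic. I expect the main obstacle to be the control of the fundamental group: one must confirm that $\pi_1(|\mathcal L|^\wedge_p)$ is indeed finite with $\pi_1=T_p(\pi_1)$ and that its generators lift, so that one is genuinely in the setting of Theorem \ref{generalthmnull} rather than needing the full fibration of Theorem \ref{generalmainthmnull}. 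If the fundamental group is nontrivial, the honest statement is the fibration, and the clean equivalence in the proposition should be understood as the case where the component group contributes nothing $p$-locally; verifying this for $p$-local compact groups, drawing on the discrete $p$-toral structure of $S$, is the delicate point that the mod $p$ homology decomposition alone does not settle.
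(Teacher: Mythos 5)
Your overall strategy matches the paper's: show $\pi_1(|\mathcal L|^\wedge_p)$ is a finite $p$-group (so $T_p(\pi_1)=\pi_1$ and the base of the fibration in Theorem~\ref{generalmainthmnull} collapses), then use the mod $p$ homology decomposition over a mod $p$ acyclic orbit category together with Lemma~\ref{Xnulo} to verify the mapping-space hypothesis. However, there is a genuine gap at the crucial step. The homology decomposition of \cite[Proposition 4.6, Corollary 5.6]{MR2302494} is a decomposition of $|\mathcal L|^\wedge_p$ itself, so the argument of Corollary~\ref{exampleGLie} yields $\map_*(|\mathcal L|^\wedge_p,Z)\simeq *$ for every connected $B\Z/p$-null $p$-complete $Z$. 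But the hypothesis of Theorem~\ref{generalthmnull} (via Theorem~\ref{generalmainthmnull}) concerns the \emph{universal cover} $(|\mathcal L|^\wedge_p)\langle 1\rangle$, and your sentence ``Lemma~\ref{covering}(1) converts this into \dots the required hypothesis applied to the relevant cover'' does not follow: Lemma~\ref{covering}(2) passes from the cover to the total space, not the other way around, and there is no formal implication from $\map_*(X,Z)\simeq *$ to $\map_*(X\langle 1\rangle,Z)\simeq *$. Since it is not known in general that the universal cover of the classifying space of a $p$-local compact group is again such a classifying space, you cannot simply re-run the decomposition argument on the cover.

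This missing step is precisely the content of the paper's proof: one pulls back $|\mathcal L|^\wedge_p\langle 1\rangle\to |\mathcal L|^\wedge_p$ along each piece $\tilde BP\to BS$ of the decomposition to obtain spaces $E_{P\le S}$, uses Puppe's theorem to show $\hocolim_{\mathcal O(\mathcal F_0)}E_{P\le S}\to |\mathcal L|^\wedge_p\langle 1\rangle$ is a mod $p$ equivalence, and checks via the fibration $E_{P\le S}\to \tilde BP\to B\pi_1(|\mathcal L|^\wedge_p)$ that each $(E_{P\le S})^\wedge_p$ is the classifying space of a $p$-compact toral group, hence $B\Z/p$-acyclic; only then does Lemma~\ref{Xnulo} apply to the cover. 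Separately, you misplace the delicate point: the control of $\pi_1$ and the lifting of its generators is not the obstacle (finiteness as a $p$-group is a direct citation to \cite[Proposition 4.4]{MR2302494}, and the lifting is absorbed into the proof of Theorem~\ref{generalmainthmnull} by an obstruction argument); the obstacle is the decomposition of the universal cover just described.
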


\begin{proof}
First of all, $\pi_1(|\mathcal L|^\wedge_p)$ is a finite $p$-group by \cite[Proposition 4.4]{MR2302494}, therefore $T_p(\pi_1(|\mathcal L|^\wedge_p))=\pi_1(|\mathcal L|^\wedge_p)$. Then, we only need to check the hypothesis in Theorem \ref{generalthmnull}. That is, $\map_*(X\langle 1 \rangle,Z)\simeq *$ for any connected $B\Z/p$-null $p$-complete space $Z$.

The same argument used in the proof of Corollary \ref{exampleGLie} using mod $p$ homology decompositions can be applied and it shows that $\map_*(|\mathcal L|^\wedge_p,Z)\simeq *$ for any connected $B\Z/p$-null $p$-complete space $Z$. But it is not known in general if the universal cover $|\mathcal L|^\wedge_p\langle 1 \rangle$ is the classifying space of a $p$-local compact group. Instead, we will check that the proof of Corollary \ref{exampleGLie} applies by showing that $|\mathcal L|^\wedge_p\langle 1 \rangle$ admits a description, up to $p$-completion, as a homotopy colimit of $B\Z/p$-acyclic spaces over a mod $p$-acyclic category.

Let $P\leq S$ be an object in $\mathcal O (\mathcal F_0)$ (see \cite[Proposition 4.6]{MR2302494}) and let $E_{P\leq S}$ be the pullback of $|\mathcal L|^\wedge_p\langle 1 \rangle\rightarrow |\mathcal L|^\wedge_p$ along $\tilde{B}P\rightarrow BS$. Then, by naturality there is a map $\hocolim_{\mathcal O (\mathcal F_0) } E_{P\leq S}\rightarrow |\mathcal L|^\wedge_p\langle 1 \rangle$ which fits in a diagram of fibrations by Puppe's theorem (e.g. \cite[Appendix]{Dror-Farjoun95}),
$$
\xymatrix{
|\mathcal L|^\wedge_p\langle 1 \rangle \ar[r]  &
|\mathcal L|^\wedge_p\ar[r]  & B\pi_1(|\mathcal L|^\wedge_p) \\
\hocolim_{\mathcal O (\mathcal F_0) } E_{P\leq S} \ar[r] \ar[u]& \hocolim_{\mathcal O (\mathcal F_0) } \tilde{B}(P) \ar[r] \ar[u] &  B\pi_1(|\mathcal L|^\wedge_p)\ar[u]^{id}.
}
$$

Since the middle vertical arrow is a mod $p$-equivalence, it follows that the left vertical arrow is also a mod $p$-equivalence. Moreover, $(\tilde{B}P)^\wedge_p$ is the classifying space of a $p$-compact toral group, so it follows from the fibration $E_{P\leq S}\rightarrow \tilde{B}P\rightarrow B\pi_1(|\mathcal L|^\wedge_p)$ that $(E_{P\leq S})^\wedge_p$ is also the classifying space of a $p$-compact toral group, and therefore $B\Z/p$-acyclic. Then the proof of Corollary \ref{exampleGLie} applies.
\end{proof}

A finite loop space is a triple $(X,BX,e)$ where $e\colon X\rightarrow \Omega BX$ is a weak equivalence and $H^*(X;\Z)$ is finite. Note that if $X$ is a finite loop space, then $BX\langle 1 \rangle ^\wedge_p$ is the classifying space of connected $p$-compact group since $H^*(\Omega BX\langle 1\rangle;\F_p)$ is finite dimensional (see \cite{DW-annals}).

\begin{cor}\label{finiteloopspaces}
Let $X$ be a finite loop space. Then there is a fibration $$
L_{\mathbb{Z}[1/p]}BX_p\longrightarrow P_{B\mathbb{Z}/p}BX\longrightarrow B(\pi/\emph
{T}_{p}\pi)
$$ where $\pi=\pi_0(X)$ and $BX_p$ is the covering of $X$ whose fundamental group is $T_p(\pi_0(X))$.
\end{cor}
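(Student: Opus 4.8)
The plan is to verify the hypotheses of Theorem \ref{generalmainthmnull} for the space $BX$ and then read off the conclusion. First I would record that a finite loop space has finitely many components: since $H^*(X;\Z)$ is finite, the group of components $\pi=\pi_0(X)$ is finite, and hence $\pi_1(BX)\cong\pi_0(X)=\pi$ is finite as well. This settles the finiteness assumption on the fundamental group, and it simultaneously identifies the base of the sought fibration as $B(\pi/T_p\pi)$ and the relevant cover as $BX_p$, the covering of $BX$ with fundamental group $T_p(\pi_1(BX))=T_p(\pi)$.

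The core of the argument is the nullity hypothesis, namely $(P_{B\Z/p}(BX\langle 1\rangle))^\wedge_p\simeq *$. Since $BX\langle 1\rangle$ is $1$-connected, by Lemma \ref{covering}(1) this is equivalent to showing $\map_*(BX\langle 1\rangle,Z)\simeq *$ for every connected $B\Z/p$-null $p$-complete space $Z$. As observed just before the statement, $BX\langle 1\rangle^\wedge_p$ is the classifying space of a connected $p$-compact group, because $H^*(\Omega BX\langle 1\rangle;\F_p)$ is finite dimensional. I would then invoke the decomposition technique of Corollary \ref{exampleGLie}: classifying spaces of $p$-compact groups admit mod $p$ homology decompositions as homotopy colimits of classifying spaces of $p$-compact toral subgroups over mod $p$ acyclic orbit categories (see \cite{CLN}). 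Each such building block is $B\Z/p$-acyclic after $p$-completion, so Lemma \ref{Xnulo} yields $\map_*(BX\langle 1\rangle^\wedge_p,Z)\simeq *$ for every connected $B\Z/p$-null $p$-complete $Z$. Because $Z$ is $p$-complete and the $p$-completion $BX\langle 1\rangle\rightarrow BX\langle 1\rangle^\wedge_p$ is a mod $p$ equivalence (as $BX\langle 1\rangle$ is nilpotent), it induces $\map_*(BX\langle 1\rangle,Z)\simeq\map_*(BX\langle 1\rangle^\wedge_p,Z)\simeq *$, which is exactly the required contractibility.

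With both hypotheses established, I would apply Theorem \ref{generalmainthmnull} directly to the space $BX$. Its conclusion produces the fibration
$$L_{\Z[\frac{1}{p}]}(BX_p)\longrightarrow P_{B\Z/p}(BX)\longrightarrow B(\pi_1(BX)/T_p(\pi_1(BX))),$$
and the identifications $\pi_1(BX)=\pi$ and $T_p(\pi_1(BX))=T_p(\pi)$ from the first step rewrite the base as $B(\pi/T_p\pi)$, which is precisely the asserted fibration.

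The main obstacle is the nullity step, and more specifically transporting the mod $p$ homology decomposition argument of Corollary \ref{exampleGLie} from compact Lie groups to the universal cover $BX\langle 1\rangle$. Two points must be handled with care: the identification of $BX\langle 1\rangle^\wedge_p$ with the classifying space of a connected $p$-compact group, so that the decomposition into $p$-compact toral pieces over a mod $p$ acyclic category is available; and the passage from the $p$-completed cover back to $BX\langle 1\rangle$, which relies on $Z$ being $p$-complete together with the nilpotence of $BX\langle 1\rangle$. Everything after that is a formal application of the already-proved Theorem \ref{generalmainthmnull}.
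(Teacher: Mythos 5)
Your proposal is correct and follows essentially the same route as the paper: identify $BX\langle 1\rangle^\wedge_p$ as the classifying space of a connected $p$-compact group, verify the nullity hypothesis via the mod $p$ homology decomposition over a mod $p$ acyclic orbit category (Lemma \ref{Xnulo}), and then apply Theorem \ref{generalmainthmnull}. The only cosmetic difference is that the paper packages the nullity step as a citation of Proposition \ref{plcgcoro} (whose proof is exactly the decomposition argument you spell out), while you also make explicit the finiteness of $\pi_0(X)$, which the paper leaves implicit.
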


\begin{proof}
Since $BX\langle 1 \rangle ^\wedge_p$ is the classifying space of a $p$-compact group, Proposition \ref{plcgcoro} shows that $P_{B\Z/p}(BX\langle 1 \rangle )^\wedge_p\simeq *$. Therefore we can apply Theorem \ref{generalmainthmnull}.
\end{proof}

We finish with a somewhat two different examples.

\begin{cor}\label{KacMoodygroups}
Let $p$ be a prime. Let $K$ be a Ka\v{c}-Moody group with a finite group of components. Then there is a fibration
$$
L_{\mathbb{Z}[1/p]}BK_p\longrightarrow P_{\emph{B}\mathbb{Z}/p}BK \longrightarrow\emph{B}(\pi_1(BK)/
{T}_{p}(\pi_1(BK))
$$
where $K_p$ is the subgroup of $K$ whose group of components is $T_{p}(\pi_1(BK)$
\end{cor}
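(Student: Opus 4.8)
The strategy is to verify the hypotheses of Theorem \ref{generalmainthmnull} for the space $X=BK$, mirroring exactly the pattern used for compact Lie groups in Theorem \ref{teoremaprincipal} and for $p$-local compact groups in Proposition \ref{plcgcoro}. Since $K$ has a finite group of components, $\pi_1(BK)=\pi_0(K)$ is finite, so the fundamental-group condition in Theorem \ref{generalmainthmnull} is immediate, and the radical subgroup $T_p(\pi_1(BK))$ together with the covering $BK_p$ are defined precisely as in the statement. The only substantive thing to check is the hypothesis that $P_{B\Z/p}(BK\langle 1\rangle)^\wedge_p\simeq *$. Once this is in place, the fibration is produced by Theorem \ref{generalmainthmnull} verbatim, with $K_p$ the subgroup whose group of components is $T_p(\pi_1(BK))$.

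First I would reduce to the connected case by passing to the universal cover. The universal cover $BK\langle 1\rangle$ is, up to $p$-completion, the classifying space of the connected Ka\v{c}-Moody group $K_0$ underlying $K$, so it suffices to show $\map_*(BK_0^\wedge_p,Z)\simeq *$ for every connected $B\Z/p$-null $p$-complete space $Z$; by Lemma \ref{covering}(1) this is equivalent to the required acyclicity of the nullification. The key input is that the classifying space of a connected Ka\v{c}-Moody group admits, after $p$-completion, a mod $p$ homology decomposition over a mod $p$ acyclic orbit-type category with values the classifying spaces of $p$-toral (or $p$-compact toral) subgroups—this is the topological Kac-Moody analogue of the Jackowski-McClure-Oliver decomposition used in Corollary \ref{exampleGLie}, available through the work of Broto-Kitchloo and Kitchloo on topological Kac-Moody groups. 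Granting such a decomposition $BK_0^\wedge_p\simeq(\hocolim_{\mathcal C}F)^\wedge_p$ with each $F(c)$ the classifying space of a $p$-toral group and $|\mathcal C|^\wedge_p\simeq *$, Lemma \ref{Xnulo} applies directly (its hypotheses are met because $p$-toral classifying spaces satisfy $P_{B\Z/p}(F(c)^\wedge_p)\simeq *$ by the first step of the proof of Corollary \ref{exampleGLie}), yielding $\map_*(BK_0^\wedge_p,Z)\simeq *$.

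The main obstacle is that, unlike the compact Lie group case, the indexing category for a Ka\v{c}-Moody group is typically infinite (indexed over the simplices of the building or equivalently over proper parabolic subgroups), so I must be careful that the homotopy limit argument in Lemma \ref{Xnulo} still goes through and that the relevant nerve is genuinely mod $p$ acyclic. This is exactly the situation already handled in Proposition \ref{plcgcoro} for the possibly-infinite orbit category $\mathcal O(\mathcal F_0)$, so I would follow that template: even if one cannot assert that $BK_0$ is literally a $p$-compact group, one pulls the decomposition back fibrewise over $B\pi_1$ to express the universal cover itself as a homotopy colimit of $B\Z/p$-acyclic spaces over a mod $p$ acyclic category, and then Lemma \ref{Xnulo} finishes the computation. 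With the hypothesis $P_{B\Z/p}(BK\langle 1\rangle)^\wedge_p\simeq *$ established, Theorem \ref{generalmainthmnull} delivers the stated fibration and the proof is complete.
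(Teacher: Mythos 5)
Your overall architecture is the right one (verify the hypothesis of Theorem \ref{generalmainthmnull} for $X=BK$ via a homotopy colimit decomposition plus Lemma \ref{Xnulo}), but the key input you invoke is not the one that is actually available, and this is where the argument has a gap. You posit a Jackowski--McClure--Oliver-type mod $p$ homology decomposition of $BK_0^\wedge_p$ over a mod $p$ acyclic orbit category with values the classifying spaces of $p$-toral subgroups, attributed to Broto--Kitchloo. That is not what their work provides: for a Ka\v{c}-Moody group there is no established analogue of the stubborn $p$-toral subgroup decomposition, and the later $p$-local compact group technology is not what is needed here. What Kitchloo's thesis and \cite{Broto-Kitchloo} do give is an \emph{integral} decomposition of $BK$ as a homotopy colimit, over the (finite, contractible) poset of spherical subsets of the generating reflections, of the classifying spaces $BK_J$ of the finite-type parabolic subgroups, each of which is a \emph{compact Lie group}. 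This is the decomposition the paper uses: each vertex $BK_J$ satisfies $\map_*(BK_J^\wedge_p,Z)\simeq *$ for connected $B\Z/p$-null $p$-complete $Z$ by Corollary \ref{exampleGLie} (already proved), the indexing category is contractible, and Lemma \ref{Xnulo} then gives $\map_*(BK_0,Z)\simeq *$; Lemma \ref{covering} converts this into the hypothesis $P_{B\Z/p}(BK\langle 1\rangle)^\wedge_p\simeq *$ of Theorem \ref{generalmainthmnull}.

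Two further remarks. First, your worry about an infinite indexing category evaporates once you use the parabolic decomposition, since a Ka\v{c}-Moody group of finite rank has only finitely many spherical subsets; the delicate fibrewise pullback argument of Proposition \ref{plcgcoro} is not needed. Second, your reduction to the universal cover is fine because the identity component $K_0$ is again a Ka\v{c}-Moody group, so the same parabolic decomposition applies to $BK_0$ directly. With the decomposition corrected as above, the rest of your plan (finiteness of $\pi_1(BK)$, identification of $BK_p$, and the application of Theorem \ref{generalmainthmnull}) goes through exactly as you describe.
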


\begin{proof}
First of all, the universal cover of $BK$ is $BK_0$ where $K_0$ is the connected component of the unit in $K$. It is shown in Nitu Kitchloo thesis (see \cite{Broto-Kitchloo}) that $BK$ is homotopy equivalent to a colimit over a contractible category of classifying spaces of compact Lie groups. Then Corollary \ref{exampleGLie} and its proof apply to show that $\map_*(BK,Z)\simeq *$ for any connected $B\Z/p$-null $p$-complete space $Z$.
\end{proof}

A different kind of example arises from the theory of infinite loop spaces and it is a consequence of Theorem $2$ in \cite{McGibbon}.

\begin{cor}\label{infiniteloopspaces}
Let $X$ be a connected infinite loop space  with finite fundamental group. Then there is a fibration
$$L_{\Z[\frac{1}{p}]}(X_p)\rightarrow P_{B\Z/p}(X)\rightarrow B(\pi_1(X)/T_p(\pi_1(X)))$$ where $X_p$ is the covering of $X$ whose fundamental group is $T_p(\pi_1(X))$.
\end{cor}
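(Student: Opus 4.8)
The plan is to verify the two hypotheses of Theorem \ref{generalmainthmnull} for the space $X$ and then simply read off the conclusion. The first hypothesis, that $X$ be connected with finite fundamental group, is exactly the standing assumption of the corollary. Hence the entire content of the proof is to establish the second hypothesis, namely that $P_{B\Z/p}(X\langle 1 \rangle)^\wedge_p\simeq *$. Once this is in hand, Theorem \ref{generalmainthmnull} produces the fibration
$$L_{\Z[\frac{1}{p}]}(X_p)\rightarrow P_{B\Z/p}(X)\rightarrow B(\pi_1(X)/T_p(\pi_1(X)))$$
with $X_p$ the cover of $X$ whose fundamental group is $T_p(\pi_1(X))$, which is precisely the assertion to be proved.

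First I would record that the universal cover $X\langle 1 \rangle$ is again an infinite loop space: writing $X\simeq \Omega^\infty E$ for a connective spectrum $E$, the $1$-connected cover is $X\langle 1 \rangle\simeq \Omega^\infty(E\langle 1\rangle)$, the infinite loop space of the $1$-connected cover of $E$, and in particular $X\langle 1\rangle$ is $1$-connected. This lets me apply Lemma \ref{covering}(1): the desired vanishing $P_{B\Z/p}(X\langle 1 \rangle)^\wedge_p\simeq *$ is equivalent to the statement that $\map_*(X\langle 1 \rangle,Z)\simeq *$ for every connected $B\Z/p$-null $p$-complete space $Z$. So it suffices to verify this mapping-space statement for the $1$-connected infinite loop space $X\langle 1\rangle$.

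The main obstacle, and the only genuinely non-formal input, is exactly this vanishing, which is where McGibbon's work enters. Theorem $2$ of \cite{McGibbon} shows that a $1$-connected infinite loop space supports no essential pointed map into a connected $B\Z/p$-null $p$-complete target; equivalently, it verifies that such a space satisfies the hypothesis $P_{B\Z/p}(-)^\wedge_p\simeq *$ required in Theorem \ref{generalmainthmnull}. Applying this to $X\langle 1\rangle$ yields $\map_*(X\langle 1 \rangle,Z)\simeq *$ for all such $Z$, and hence $P_{B\Z/p}(X\langle 1 \rangle)^\wedge_p\simeq *$ by Lemma \ref{covering}(1). With both hypotheses of Theorem \ref{generalmainthmnull} now in place, the fibration follows at once. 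I expect essentially all of the difficulty to be absorbed into the invocation of McGibbon's theorem; the identification of the infinite loop structure on the universal cover and the reduction to a mapping-space vanishing via Lemma \ref{covering} are formal.
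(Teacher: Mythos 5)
Your proposal is correct and is exactly the argument the paper intends: the paper gives no separate proof of this corollary beyond the preceding remark that it ``is a consequence of Theorem 2 in \cite{McGibbon}'', i.e.\ that McGibbon's result verifies the hypothesis $(P_{B\Z/p}(X\langle 1\rangle))^\wedge_p\simeq *$ of Theorem \ref{generalmainthmnull}. Your write-up simply makes explicit the formal reductions (passing to the universal cover as an infinite loop space and using Lemma \ref{covering}(1)) that the paper leaves implicit.
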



\section{Relation of nullification functors with other idempotent functors}

In this section we compare the effect of nullification
$P_{ B\Z /p}$ on spaces which satisfy the hypothesis of Theorem \ref{generalmainthmnull} with the effect of some completions or localizations on it.
We analyse both functors that are supposed to kill the
$p$-torsion, like $L_{\Z [1/p]}$ or $\Z [1/p]_{\infty}$, and
functors that usually preserve it, as $L_{\Z [1/q]}$ and
$p$-completion do.


\begin{lemma}
\label{pq}
Let $X$ be a connected space with finite fundamental group, $p$ and $q$
different primes. Then $(X^{\wedge}_p)^{\wedge}_q$ is contractible.
\end{lemma}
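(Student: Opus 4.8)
Let $X$ be a connected space with finite fundamental group, $p$ and $q$ different primes. Then $(X^{\wedge}_p)^{\wedge}_q$ is contractible.

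The plan is to reduce the statement to a cohomological computation about the $p$-completion $X^{\wedge}_p$, and then to invoke the basic characterization of when a $q$-completion is trivial. First I would record that since $\pi_1(X)$ is finite, $X$ is $p$-good by \cite[VII.5.1]{BK}, so $X^{\wedge}_p$ is a genuine $p$-complete space with mod $p$ cohomology isomorphic to that of $X$. The key structural fact I want to exploit is that a $p$-complete space is, in an appropriate homological sense, rigid away from the prime $p$: its mod $q$ cohomology should be as trivial as possible. Concretely, I would aim to show that $\tilde{H}_*(X^{\wedge}_p;\F_q)=0$, i.e. that $X^{\wedge}_p$ is $\F_q$-acyclic, after which $q$-completion of a connected $\F_q$-acyclic space is contractible.

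The main step, then, is establishing $\tilde{H}_*(X^{\wedge}_p;\F_q)=0$ for $q\neq p$. Here I would use that $X^{\wedge}_p$ is nilpotent with finite (hence finite $p$-group, after completion) fundamental group: by \cite[VII.3.2]{BK} and the general theory of $p$-completion, $\pi_1(X^{\wedge}_p)$ is a finite $p$-group and the higher homotopy groups are finitely generated $\Z^{\wedge}_p$-modules, in particular they are uniquely $q$-divisible for $q\neq p$. I would run the argument up the Postnikov tower of $X^{\wedge}_p$: each stage is a principal fibration with fibre $K(\pi,n)$ where $\pi$ is a $\Z^{\wedge}_p$-module, and such a module $\pi$ is uniquely $q$-divisible, so $K(\pi,n)$ is $\F_q$-acyclic. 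A Serre spectral sequence comparison then propagates $\F_q$-acyclicity from each stage to the next, and a lim$^1$/Milnor-sequence argument passes to the inverse limit. This shows $X^{\wedge}_p$ is $\F_q$-acyclic.

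Finally, once $X^{\wedge}_p$ is a connected $\F_q$-acyclic space, its $q$-completion $(X^{\wedge}_p)^{\wedge}_q$ is contractible: for a connected space with trivial reduced mod $q$ homology, the Bousfield-Kan tower collapses and the completion is a point (this is the standard fact that $\F_q$-acyclic connected spaces are $q$-completion-trivial, cf. \cite[I.5]{BK} and the $q$-good analogue of \cite[V.3.1]{BK}). I expect the main obstacle to be the bookkeeping for the fundamental group: after $p$-completion $\pi_1$ becomes a pro-$p$ (finite $p$-) group, and one must be careful that the Postnikov/Serre spectral sequence argument is legitimate in the non-simply-connected nilpotent setting, where the local coefficient systems are themselves uniquely $q$-divisible. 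Granting the nilpotence of $X^{\wedge}_p$ and the $\Z^{\wedge}_p$-module structure on its homotopy, the $\F_q$-acyclicity follows cleanly, and the contractibility of the $q$-completion is then immediate.
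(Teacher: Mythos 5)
Your overall architecture is sound and genuinely different from the paper's: you aim to prove $\tilde{H}_*(X^\wedge_p;\F_q)=0$ and then quote the basic property of Bousfield--Kan completion that an $H\F_q$-equivalence $X^\wedge_p\to *$ induces an equivalence of $\F_q$-completions; that last step is correct. The paper instead never computes mod $q$ homology: it quotes the simply connected case from \cite[VI.5.1]{BK}, applies fibrewise $q$-completion to the universal-cover fibration $X^\wedge_p\langle 1\rangle\to X^\wedge_p\to B\pi_1(X^\wedge_p)$ to collapse the fibre, and finishes because $B(\text{finite }p\text{-group})^\wedge_q\simeq *$.

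However, your key step has a genuine gap: the hypothesis is only that $X$ is connected with finite fundamental group, so neither $X$ nor $X^\wedge_p$ need be nilpotent, and you explicitly lean on "granting the nilpotence of $X^\wedge_p$". Without nilpotence the Postnikov tower of $X^\wedge_p$ admits no principal refinement, the stages are twisted fibrations over $B\pi_1(X^\wedge_p)$ with nontrivial local coefficients, and the argument "each $K(\pi,n)$ with $\pi$ a $\Z^\wedge_p$-module is $\F_q$-acyclic, hence so is each stage" does not run. A concrete obstruction: for $X=\mathbb{R}P^2$ and $p=2$, the completion has $\pi_1=\Z/2$ acting on $\pi_2\cong\Z^\wedge_2$ by $-1$, which is not a nilpotent action, so the space is not nilpotent even though $\pi_1(X)$ is finite. (The citation \cite[VII.5.1]{BK} gives $p$-goodness, not nilpotence; the paper itself only invokes nilpotence of a completion under the much stronger hypothesis that \emph{all} homotopy groups are finite, via \cite[VII.4.3]{BK}. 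Your claim that the higher homotopy groups are \emph{finitely generated} $\Z^\wedge_p$-modules is also false in general, though harmless, since only unique $q$-divisibility is used.) The repair is exactly to split off the fundamental group first: run your Postnikov argument only on the $1$-connected, $p$-complete universal cover $X^\wedge_p\langle 1\rangle$ to get its $\F_q$-acyclicity (or quote \cite[VI.5.1]{BK}), then feed this into the Serre spectral sequence of the universal-cover fibration with (possibly twisted) $\F_q$-coefficients to conclude $H_*(X^\wedge_p;\F_q)\cong H_*(B\pi_1(X^\wedge_p);\F_q)$, which is trivial in positive degrees since $\pi_1(X^\wedge_p)$ is a finite $p$-group. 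At that point your final step applies; this brings your proof essentially back to the paper's reduction.
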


\begin{proof}

If $X$ is $1$-connected the case of $\mathbb{F}_q$-completion is described
in \cite[VI.5.1]{BK}. If $X$ is not simply-connected, consider the fibration $X^\wedge_p \langle 1 \rangle \rightarrow X^\wedge_p \rightarrow B\pi_1(X^\wedge_p)$ and its fibrewise $q$-completion, $$(X^\wedge_p \langle 1\rangle)^\wedge_q\rightarrow Y\rightarrow B\pi_1(X^\wedge_p).$$

Since the fibre is a $1$-connected $p$-complete space completed at $q$ it is contractible, then $Y\simeq B\pi_1(X^\wedge_p)$. But then $(X^{\wedge}_p)^{\wedge}_q\simeq Y^\wedge_q\simeq B\pi_1(X^\wedge_p)^\wedge_q$ which is contractible since $B\pi_1(X^\wedge_p)$ is the classifying space of a finite $p$-group.
\end{proof}

We start by showing some direct direct consequences of Theorem \ref{generalmainthmnull}.

\begin{prop}\label{pycompletion}
Let $X$ be a space which satisfies the hypothesis of Theorem \ref{generalmainthmnull}. Then $\pi_1(P_{B\Z/p}(X))=\pi_1(X)/T_p(\pi_1(X))$ and $(P_{B\Z/p}(X))^\wedge_p\simeq {*}$. Moreover $P_{B\Z/p}(X^\wedge_p)\simeq L_{\Z[\frac{1}{p}]}(X^\wedge_p)\simeq (X^\wedge_p)_\Q$ is $1$-connected.
\end{prop}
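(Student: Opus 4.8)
The plan is to prove the three assertions of Proposition~\ref{pycompletion} in order, reading off the first two directly from the fibration supplied by Theorem~\ref{generalmainthmnull} and then treating the $p$-completed statement by a separate fracture-type argument. The fibration
$$L_{\Z[\frac{1}{p}]}(X_p)\rightarrow P_{B\Z/p}(X)\rightarrow B(\pi_1(X)/T_p(\pi_1(X)))$$
is the central object throughout.

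\textbf{Fundamental group.} First I would establish $\pi_1(P_{B\Z/p}(X))=\pi_1(X)/T_p(\pi_1(X))$. By Lemma~\ref{radicalperfect}, since $X_p$ has fundamental group $T_p(\pi_1(X))$ which equals its own $p$-radical, the space $L_{\Z[\frac{1}{p}]}(X_p)$ is simply connected. Hence the fibre of the fibration is $1$-connected, and the long exact sequence of homotopy groups immediately gives $\pi_1(P_{B\Z/p}(X))\cong \pi_1(B(\pi_1(X)/T_p(\pi_1(X))))=\pi_1(X)/T_p(\pi_1(X))$.

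\textbf{The $p$-completion.} Next I would show $(P_{B\Z/p}(X))^\wedge_p\simeq *$. Here the fibre $L_{\Z[\frac{1}{p}]}(X_p)$ is $1$-connected, and by Lemma~\ref{univcoefficients} a $1$-connected $\Z[\frac{1}{p}]$-localization is $\F_p$-acyclic; so the fibre has trivial mod $p$ homology and thus trivial $p$-completion. The base $B(\pi_1(X)/T_p(\pi_1(X)))$ is the classifying space of a finite group with no $p$-torsion in its abelianization-quotient, so its $p$-completion is contractible as well (its order is prime to $p$ by construction of the $p$-radical quotient). The cleanest route is to $p$-complete the fibration fibrewise: since $\pi_1(P_{B\Z/p}(X))$ is finite, the total space is $p$-good, and the fibrewise $p$-completion has contractible fibre and contractible base, forcing $(P_{B\Z/p}(X))^\wedge_p\simeq *$. \emph{This step I expect to be the main obstacle}, because one must be careful that the fibration is nilpotent or at least that the Bousfield–Kan fibre-completion machinery applies; invoking the finiteness of $\pi_1$ and $p$-goodness (as in the Remark following Lemma~\ref{null+comp}) should suffice.

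\textbf{The completed space.} Finally, for $P_{B\Z/p}(X^\wedge_p)\simeq L_{\Z[\frac{1}{p}]}(X^\wedge_p)\simeq (X^\wedge_p)_\Q$, I would proceed in two moves. Applying Lemma~\ref{pq}, all $q$-completions of $X^\wedge_p$ for $q\neq p$ are contractible and its $p$-completion is again $X^\wedge_p$; combined with the fact that $X^\wedge_p$ has finite (indeed $p$-group after passing to components) fundamental group, the only nontrivial arithmetic fracture datum of $L_{\Z[\frac{1}{p}]}(X^\wedge_p)$ is the rational part, giving $L_{\Z[\frac{1}{p}]}(X^\wedge_p)\simeq (X^\wedge_p)_\Q$. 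For the identification with $P_{B\Z/p}(X^\wedge_p)$, I would apply Theorem~\ref{generalthmnull} (or Theorem~\ref{generalmainthmnull}) to $X^\wedge_p$ in place of $X$: since $\pi_1(X^\wedge_p)$ is a finite $p$-group, it equals its own $p$-radical, and the hypothesis on the universal cover transfers through the already-established equivalences, so the natural map $P_{B\Z/p}(X^\wedge_p)\rightarrow L_{\Z[\frac{1}{p}]}(X^\wedge_p)$ is an equivalence. The $1$-connectivity of the resulting space then follows from Lemma~\ref{radicalperfect} once more.
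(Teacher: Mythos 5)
Your proposal is correct and follows essentially the same route as the paper: the first claim is read off the fibration of Theorem~\ref{generalmainthmnull} using the $1$-connectivity of the fibre from Lemma~\ref{radicalperfect}; the second follows from the $\F_p$-acyclicity of the fibre together with the base having order prime to $p$ (the paper leaves the mechanism implicit where you spell out a fibrewise completion, but this is a presentational difference only); and the third applies the theorem to $X^\wedge_p$, whose fundamental group is its own $p$-radical, and concludes via the arithmetic square and Lemma~\ref{pq}. No gaps beyond those already present in the paper's own argument.
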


\begin{proof}
Since $L_{\Z[\frac{1}{p}]}(X_p)$ is $1$-connected by Lemma \ref{radicalperfect}, it is clear from the fibration in Theorem \ref{generalmainthmnull} that $\pi_1(P_{B\Z/p}(X))=\pi_1(X)/T_p(\pi_1(X))$.
The space $L_{\Z[\frac{1}{p}]}(X_p)$ is mod $p$ acyclic by \cite[Lemma 6.2]{MR97i:55028} and the order of $\pi_1(X)/T_p(\pi_1(X))$ is prime to $p$, it follows that $(P_{B\Z/p}(X))^\wedge_p$ is weakly contractible.

The second statement follows from applying Theorem \ref{generalmainthmnull} to $X^\wedge_p$. Observe that if $X$ satisfies the hypothesis of the theorem, then $X^\wedge_p$ also does. Moreover, $\pi_1(X^\wedge_p)$ is a finite $p$-group, then   $P_{B\Z/p}(X^\wedge_p)\simeq L_{\Z[\frac{1}{p}]}(X^\wedge_p)$. It remains to prove that they are equivalent to $(X^\wedge_p)_\Q$. Since they are $1$-connected we can apply Sullivan's arithmetic square.

We have proved that  $(P_{B\Z/p}(X^\wedge_p))^\wedge_p$ is weakly contractible. Moreover, if $q\neq p$ then $(P_{B\Z/p}(X^\wedge_p))^\wedge_q\simeq (X^\wedge_p)^\wedge_q$ which is weakly contractible by Lemma \ref{pq}. Then $P_{B\Z/p}(X^\wedge_p) \simeq P_{B\Z/p}(X^\wedge_p)_\Q\simeq  (X^\wedge_p)_\Q$.
\end{proof}

We start by showing that $ B\Z /p$-nullification and
$p$-completion behave like opposite functors in this context.

\begin{remark} If we complete in one prime $q$ and $ B\Z /p$-nullify with
regard to a different prime $p$, then $X^\wedge_q$ is $ B\Z /p$-null
and the coaugmentation $X\ra P_{ B\Z /p}X$ is an
equivalence after $q$-completion if $X$ satisfies the hypothesis of Lemma \ref{null+comp}.
\end{remark}

\begin{remark}
Note that in general a connected space $X$ could be $\Z [1/p]$-bad if $X$
is not $1$-connected, and then it is not possible in general to
replace completion by localization in the previous results. If we know in
advance that $X$ is $\Z [1/p]$-good (this happens, for
example, if its fundamental group is $\Z [1/p]$-perfect) then we
can do the replacement, and moreover $\Z [1/p]_{\infty}X\simeq
L_{\Z [1/p]}X$. See for example (\cite[1.E]{Dror-Farjoun95}) for
more information about the relation between $R$-localization and
$R$-completion.
\label{idemp}
\end{remark}

\begin{prop}
\label{pqPcompletion}
Let $X$ be a space which satisfies the hypothesis of Theorem \ref{generalmainthmnull} and such that $\pi_1(X)\cong T_q(\pi_1(X))$. Then there are homotopy equivalences
$$P_{B\mathbb{Z}/p}L_{\mathbb{Z}[1/q]}X\simeq L_{\mathbb{Z}[1/p,1/q]}X\simeq
L_{\mathbb{Z}[1/p]}P_{B\mathbb{Z}/q}X.$$
\end{prop}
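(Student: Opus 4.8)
The plan is to prove both equivalences by the same two-step mechanism: first replace a $B\Z/p$- (resp. $B\Z/q$-) nullification by the corresponding $\Z[1/p]$- (resp. $\Z[1/q]$-) localization on a suitable $1$-connected space by means of Theorem \ref{generalthmnull}, and then collapse the resulting iterated localization, using that these arithmetic localizations compose. First I would record the reductions that make the relevant spaces $1$-connected. Since $\pi_1(X)=T_q(\pi_1(X))$, Lemma \ref{radicalperfect} shows that $L_{\Z[1/q]}(X)$ is $1$-connected; and because $T_q(\pi_1(X))\subseteq T_{\{p,q\}}(\pi_1(X))\subseteq \pi_1(X)$ forces $\pi_1(X)=T_{\{p,q\}}(\pi_1(X))$, the same lemma (with $S=\{p,q\}$) shows that $L_{\Z[1/p,1/q]}(X)$ is $1$-connected as well.

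For the left-hand equivalence, set $W=L_{\Z[1/q]}(X)$. I would first check that $(P_{B\Z/p}(W))^\wedge_p\simeq *$. By Lemma \ref{univcoefficients} the coaugmentation $X\to W$ is an $\F_p$-equivalence (here $p\neq q$), hence a mod $p$ equivalence; since any connected $B\Z/p$-null $p$-complete space $Z$ is $H\F_p$-local, this induces a weak equivalence $\map_*(W,Z)\simeq\map_*(X,Z)$. The hypothesis $(P_{B\Z/p}(X\langle 1 \rangle))^\wedge_p\simeq *$ gives $\map_*(X\langle 1 \rangle,Z)\simeq *$ by Lemma \ref{covering}(1), and then $\map_*(X,Z)\simeq *$ by Lemma \ref{covering}(2), using that $\pi_1(X)$ is finite. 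Thus $\map_*(W,Z)\simeq *$ for every such $Z$, and Lemma \ref{covering}(1) applied to the $1$-connected space $W$ yields $(P_{B\Z/p}(W))^\wedge_p\simeq *$. As $W$ is $1$-connected its fundamental group trivially satisfies the remaining hypotheses, so Theorem \ref{generalthmnull} gives $P_{B\Z/p}(W)\simeq L_{\Z[1/p]}(W)=L_{\Z[1/p]}(L_{\Z[1/q]}(X))$.

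It remains to identify $L_{\Z[1/p]}(L_{\Z[1/q]}(X))$ with $L_{\Z[1/p,1/q]}(X)$. Since $W=L_{\Z[1/q]}(X)$ is $1$-connected, hence nilpotent, its homological localization agrees with the classical one and its homotopy groups are $\Z[1/q]$-modules; consequently $L_{\Z[1/p]}(W)$ is $1$-connected with homotopy groups $\pi_i(W)\otimes\Z[1/p]$, which are $\Z[1/p,1/q]$-modules. A $1$-connected space with $\Z[1/p,1/q]$-module homotopy groups is $H\Z[1/p,1/q]$-local, and the composite $X\to W\to L_{\Z[1/p]}(W)$ is a $\Z[1/p,1/q]$-homology equivalence (each map is, by flatness of the relevant localizations of $\Z$). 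By the universal property of $L_{\Z[1/p,1/q]}$ this composite exhibits $L_{\Z[1/p]}(L_{\Z[1/q]}(X))\simeq L_{\Z[1/p,1/q]}(X)$, completing the left-hand equivalence.

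The right-hand equivalence follows by the symmetric argument with $p$ and $q$ interchanged, and is in fact simpler. Because $\pi_1(X)=T_q(\pi_1(X))$, in Theorem \ref{generalmainthmnull} applied at the prime $q$ one has $X_q=X$ and $\pi_1(X)/T_q(\pi_1(X))=1$, so the fibration collapses to an equivalence $P_{B\Z/q}(X)\simeq L_{\Z[1/q]}(X)$; here one uses that $X$ satisfies the hypothesis of the theorem also at $q$, which is automatic for the geometric examples of Section 4, since their $1$-connected covers have $r$-compact classifying spaces for every prime $r$ (Corollary \ref{exampleGLie} and Proposition \ref{plcgcoro}). Applying $L_{\Z[1/p]}$ and invoking the composition of localizations from the previous paragraph gives $L_{\Z[1/p]}(P_{B\Z/q}(X))\simeq L_{\Z[1/p]}(L_{\Z[1/q]}(X))\simeq L_{\Z[1/p,1/q]}(X)$. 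The main obstacle throughout is ensuring that the acyclicity hypothesis $(P_{B\Z/p}(-))^\wedge_p\simeq *$ survives after replacing $X$ by its localization $L_{\Z[1/q]}(X)$; this is precisely what the mod $p$ equivalence $X\to L_{\Z[1/q]}(X)$ together with Lemma \ref{covering} secures, and it is the step where finiteness of $\pi_1(X)$ and the $1$-connectivity of the localization are essential.
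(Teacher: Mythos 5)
Your proof is correct and follows essentially the same route as the paper's: convert each nullification into the corresponding arithmetic localization via Theorem~\ref{generalthmnull}/\ref{generalmainthmnull} and then compose the localizations $L_{\Z[1/p]}L_{\Z[1/q]}\simeq L_{\Z[1/p,1/q]}$. You in fact supply details the paper leaves implicit --- transferring the acyclicity hypothesis from $X$ to $L_{\Z[1/q]}(X)$ via the mod $p$ equivalence and Lemma~\ref{covering}, and noting that the hypothesis of Theorem~\ref{generalmainthmnull} is also needed at the prime $q$ --- so no changes are required.
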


\begin{proof}
Since $\pi_1(X)\cong T_q(\pi_1(X))$, $L_{\Z[\frac{1}{q}]}(X)\simeq P_{B\Z/q}(X)$ is $1$-connected. Then, by Theorem  \ref{generalmainthmnull}, we have $P_{B\Z/p}(P_{B\Z/q}(X))\simeq P_{B\Z/p}(L_{\Z[\frac{1}{q}]}(X))\simeq L_{\Z[\frac{1}{p}]}(L_{\Z[\frac{1}{q}]}(X))\simeq  L_{\Z[\frac{1}{p}]}(P_{B\Z/q}(X))$.
\end{proof}

We finish by establishing the commutativity of the functors
$P_{ B\Z /p}$ and $P_{ B\Z /q}$.
The problem of commutation of localization functors was
extensively studied in \cite{RS}.

\begin{prop}
\label{commutativity}
Let $X$ be a connected space, $p$ and $q$ two different
primes. Assume that $X$ satisfies the hypothesis of Theorem \ref{generalmainthmnull} for both primes $p$ and $q$.
Then there are homotopy equivalences
$$P_{B\mathbb{Z}/p}P_{B\mathbb{Z}/q}X
\simeq P_{B\mathbb{Z}/p\vee B\mathbb{Z}/q}X\simeq
P_{B\mathbb{Z}/q}P_{B\mathbb{Z}/p}X.$$
\end{prop}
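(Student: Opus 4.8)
The plan is to deduce the proposition from the universal properties of nullification together with the structural fibration of Theorem~\ref{generalmainthmnull}. First I would isolate the purely formal ingredient. For connected $A,B$ and any space $X$ there are canonical comparison maps relating $P_AP_BX$, $P_{A\vee B}X$ and $P_BP_AX$, coming from the fact that a space is $(A\vee B)$-null precisely when it is both $A$-null and $B$-null. Since $P_AP_BX$ lies in the image of $P_A$ it is automatically $A$-null, so the coaugmentation $X\to P_AP_BX$ factors through $P_{A\vee B}X$ as soon as $P_AP_BX$ is also $B$-null; conversely $P_{A\vee B}X$ is both $A$- and $B$-null, so $X\to P_{A\vee B}X$ factors first through $P_BX$ and then through $P_AP_BX$, and the two resulting maps are mutually inverse by the uniqueness clause of the universal property. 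Hence $P_{A\vee B}X\simeq P_AP_BX$ \emph{as soon as} $P_AP_BX$ is $B$-null. Applying this with $(A,B)=(B\Z/p,B\Z/q)$, and symmetrically with the two primes interchanged (which is legitimate because $X$ satisfies the hypotheses for both), the whole statement reduces to the single assertion that $P_{B\Z/p}P_{B\Z/q}X$ is $B\Z/q$-null; the two equivalences then share the common middle term $P_{B\Z/p\vee B\Z/q}X$.

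Write $W=P_{B\Z/q}X$. I would next check that $W$ again satisfies the hypotheses of Theorem~\ref{generalmainthmnull}, now at the prime $p$. Its fundamental group is $\pi_1(X)/T_q(\pi_1(X))$ by Proposition~\ref{pycompletion}, which is finite of order prime to $q$; and its universal cover is the $1$-connected fibre $W\langle 1\rangle\simeq L_{\Z[\frac{1}{q}]}(X_q)$ of the fibration of Theorem~\ref{generalmainthmnull} applied at $q$. To see that $P_{B\Z/p}(W\langle 1\rangle)^\wedge_p\simeq *$, by Lemma~\ref{covering}(1) it suffices to show $\map_*(W\langle 1\rangle,Z)\simeq *$ for every connected $B\Z/p$-null $p$-complete $Z$. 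Since $X_q\to L_{\Z[\frac{1}{q}]}(X_q)$ is an $\F_p$-equivalence (Lemma~\ref{univcoefficients}, as $p\neq q$) and $Z$ is $p$-complete, $\map_*(W\langle 1\rangle,Z)\simeq\map_*(X_q,Z)$. Now $X_q$ is a connected cover of $X$ whose own universal cover is $X\langle 1\rangle$; the hypothesis at $p$ gives $P_{B\Z/p}(X\langle 1\rangle)^\wedge_p\simeq *$, hence $\map_*(X\langle 1\rangle,Z)\simeq *$ by Lemma~\ref{covering}(1), and then Lemma~\ref{covering}(2) applied to the finite cover $X\langle 1\rangle\to X_q$ yields $\map_*(X_q,Z)\simeq *$. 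Thus $W$ meets the hypotheses, and Theorem~\ref{generalmainthmnull} provides a fibration $L_{\Z[\frac{1}{p}]}(W_p)\to P_{B\Z/p}(W)\to B(\pi_1(W)/T_p(\pi_1(W)))$.

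It then remains to prove that this total space is $B\Z/q$-null, for which I would show that both its base and its fibre are $B\Z/q$-null and invoke \cite[3.D.3]{Dror-Farjoun95}: the base $B(\pi_1(W)/T_p(\pi_1(W)))$ is the classifying space of a finite group of order prime to $q$, hence $B\Z/q$-null by the transfer argument of Lemma~\ref{nullK(G,n)}, so $P_{B\Z/q}$ preserves the fibration and, once the fibre is known to be $B\Z/q$-null, the total space is $B\Z/q$-null as well. For the fibre $L_{\Z[\frac{1}{p}]}(W_p)$, which is $1$-connected by Lemma~\ref{radicalperfect}, I would argue through the arithmetic square: being rationally acyclic and $\F_\ell$-acyclic for $\ell\neq q$, the space $B\Z/q$ detects only the $q$-completion, so $L_{\Z[\frac{1}{p}]}(W_p)$ is $B\Z/q$-null once $(L_{\Z[\frac{1}{p}]}(W_p))^\wedge_q\simeq *$. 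Here $W_p\to L_{\Z[\frac{1}{p}]}(W_p)$ is an $\F_q$-equivalence (Lemma~\ref{univcoefficients}), so this $q$-completion agrees with $(W_p)^\wedge_q$; and $W_p$ is $\F_q$-acyclic, because in its defining fibration $L_{\Z[\frac{1}{q}]}(X_q)\to W_p\to BT_p(\pi_1(W))$ the fibre is $\F_q$-acyclic (the $1$-connected $L_{\Z[\frac{1}{q}]}(X_q)$ is $\F_q$-acyclic by Lemma~\ref{univcoefficients}) and the base has order prime to $q$. As $\pi_1(W_p)=T_p(\pi_1(W))$ is finite, $W_p$ is $\F_q$-good, so $\F_q$-acyclicity forces $(W_p)^\wedge_q\simeq *$, completing the verification.

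The formal reduction in the first paragraph is routine; the real work, and the main obstacle, is the verification that $W=P_{B\Z/q}X$ still satisfies the hypotheses of Theorem~\ref{generalmainthmnull} at $p$ and that the resulting fibre $L_{\Z[\frac{1}{p}]}(W_p)$ is $B\Z/q$-null. Both hinge on carefully tracking covers and mod-$\ell$ equivalences: the first on transporting the mapping-space vanishing from $X\langle 1\rangle$ down to the intermediate cover $X_q$ via Lemma~\ref{covering}, and the second on the observation that $L_{\Z[\frac{1}{p}]}(W_p)$, although not rationally trivial, becomes contractible after $q$-completion, so that $B\Z/q$ cannot see it. The point demanding most care is keeping straight which prime governs which localization or completion, since three arithmetic operations --- $p$-completion, $q$-completion and rationalization --- interact simultaneously.
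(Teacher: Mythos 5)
Your proof is correct and follows essentially the same route as the paper's: reduce via the universal property of nullification (the paper cites \cite[Prop 1.1]{RS}) to showing that $P_{B\Z/p}P_{B\Z/q}X$ is $B\Z/q$-null, verify that $P_{B\Z/q}X$ inherits the hypotheses of Theorem~\ref{generalmainthmnull} at $p$ through the same chain of mapping-space equivalences passing through $X_q$, and finish with a Sullivan arithmetic-square argument exploiting that the relevant spaces become contractible after $q$-completion. The only differences are organizational: the paper first reduces to $\pi_1(X)=T_{\{p,q\}}(\pi_1(X))$ and uses a fibrewise nullification to make the space in question $1$-connected before applying the arithmetic square, whereas you keep the general fundamental group and instead check $B\Z/q$-nullity of the base and of the $1$-connected fibre of the fibration from Theorem~\ref{generalmainthmnull} separately.
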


\begin{proof}
It is enough to show the first equivalence since the other one will follow by symmetry.

Consider the set of primes $S=\{p,q\}$. By pulling back the universal fibration, there is a fibration $X_S\rightarrow X \rightarrow B(\pi_1(X)/(T_S(\pi_1(X)))$, where $\pi_1(X_S)=T_S(\pi_1(X))$. Since the order of $\pi_1(X)/T_S(\pi_1(X))$ is prime to both $p$ and $q$, the space $B(\pi_1(X)/(T_S(\pi_1(X)))$ is both $B\Z/p$-null and $B\Z/q$-null (in particular it is also $B\Z/p\vee B\Z/q$-null) the composite of functors $P_{B\Z/p}\circ P_{B\Z/q}$ and $P_{B\Z/p\vee B\Z/q}$ preserve the fibration \cite[3.D.3]{Dror-Farjoun95}, and there is a diagram of fibrations
$$
\xymatrix{
P_{B\Z/p}( P_{B\Z/q}(X_S)) \ar[r] \ar[d] & P_{B\Z/p}( P_{B\Z/q}(X)) \ar[r] \ar[d]  & B(\pi_1(X)/(T_S(\pi_1(X))) \ar[d]^{id} \\
P_{B\Z/p\vee B\Z/q}(X_S) \ar[r] & P_{B\Z/p\vee B\Z/q}(X) \ar[r] & B(\pi_1(X)/(T_S(\pi_1(X))).
}
$$
where the first two vertical maps exist because if a space $Y$ is $B\Z/p\vee B\Z/q$-null, and then it is also $B\Z/p$-null and $B\Z/q$-null. Then we can assume that the group $\pi_1(X)=T_S(\pi_1(X))$, which we simply denote by $\pi$ in the sequel, is generated by $p$ and $q$ torsion.

By \cite[Prop 1.1]{RS}, we need to show that  $P_{B\Z/p}( P_{B\Z/q}(X))$ is $B\Z/q$-null and conversely $P_{B\Z/q}( P_{B\Z/p}(X))$ is $B\Z/p$-null. In our situation, by symmetry,  it is enough to check one of the two conditions.

Let us see first that $P_{B\Z/p}( P_{B\Z/q}(X))$ is $1$-connected. We can apply the fibrewise $B\Z/p$-nullification to the fibration in Theorem \ref{generalmainthmnull},
$$
\xymatrix{
L_{\Z[\frac{1}{q}]}(X_q) \ar[r] \ar[d] & P_{B\Z/q}(X_q) \ar[r] \ar[d]  & B(\pi/T_q(\pi)) \ar[d]^{id} \\
P_{B\Z/p}(L_{\Z[\frac{1}{q}]}(X_q)) \ar[r] & \bar{P} \ar[r] & B(\pi/T_q(\pi)),
}
$$
where $L_{\Z[\frac{1}{q}]}(X_q)$ is $1$-connected and $P_{B\Z/p}(\bar{P})\simeq P_{B\Z/p}(P_{B\Z/q}(X_q))$;\v{} then $\bar{P}$ has fundamental group $\pi/T_q(\pi)$ which is generated by $p$ torsion. If $\bar{P}$ satisfies the hypothesis of Theorem \ref{generalmainthmnull} for the prime $p$ then $P_{B\Z/p}(\bar{P})\simeq L_{\Z[\frac{1}{p}]}(\bar{P})$ is $1$-connected. We need to check that for any connected space $Z$ which is $p$-complete and $B\Z/p$-null, $\map_*(\bar{P}\langle 1 \rangle,Z)$ is weakly contractible. Note that $\bar{P}\langle 1 \rangle\simeq P_{B\Z/p}(L_{\Z[\frac{1}{q}]}(X_q))$, and then
$$\textrm{map}_*(P_{B\Z/p}(L_{\Z[\frac{1}{q}]}(X_q)),Z)\simeq \textrm{map}_*(L_{\Z[\frac{1}{q}]}(X_q),Z)\simeq \textrm{map}_*(P_{B\Z/q}(X_q),Z)\simeq$$ $$\simeq \textrm{map}_*(X_q,Z),$$
where the last equivalence follows because $(P_{B\Z/q}(X_q))^\wedge_p\simeq (X_q)^\wedge_p$ and $Z$ is $p$-complete. Finally Lemma \ref{covering} tells us that this last mapping space is weakly contractible.

We denote by $Y$ the space $P_{B\Z/p}( P_{B\Z/q}(X))$, and we finally should check that it is $B\Z/q$-null. Since it is a $1$-connected space, we can use Sullivan's arithmetic square and check that the mapping spaces $\map_*(B\Z/q, Y_\Q)$ and $\map_*(B\Z/q, Y^\wedge_r)$ are weakly contractible for any prime $r$.

If $r\neq q$, $\map_*(B\Z/q, Y^\wedge_r)\simeq *$ because $(B\Z/q)^\wedge_r\simeq *$. Also, since $(B\Z/q)_\Q\simeq *$,  $\map_*(B\Z/q, Y_\Q)\simeq *$. We are left to the case $r=q$, and $Y^\wedge_q=(P_{B\Z/p}( P_{B\Z/q}(X)))^\wedge_q\simeq (P_{B\Z/q}(X))^\wedge_q\simeq *$ by Proposition \ref{pycompletion}. So we are done.
\end{proof}

For example, given a compact Lie group, $BG$ satisfies the hypothesis of Theorem \ref{generalmainthmnull} for any prime $p$.

\begin{remark}
The same proof remains valid if we apply in succession over $X$
a finite number of $ B\Z /p$-nullification functors for different
primes assuming $X$ satisifes the hypothesis of Theorem \ref{generalmainthmnull} for each prime.
On the other hand, it is likely that that the
nullification of $X$ with regard to the wedge of the
classifying spaces of \emph{all} primes is homotopy equivalent to
the rational localization of $X$. See \cite[Section 3.2]{Ramon} for details.
\end{remark}

\section{$B\Z/p$-cellularization of classifying spaces}

In this section we will give a Serre-type general dichotomy theorem (Theorem \ref{thmdichotomy}), which is very much in the spirit of \cite{FS07}. Then, we will use this statement to describe several examples concerning the $B\Z /p$-cellularization of some families of classifying spaces of remarkable groups, such as $p$-toral groups, finite groups with a $p$-subgroup of $p'$-index, $BS^3$ or $BSO(3)$ (at the prime $2$). Our considerations are also based in the results of the previous sections relating cellularization, nullification and completion.

\subsection{The dichotomy theorem}

The main result of this section is:

\begin{teo}\label{thmdichotomy}
 Let $X$ be a connected nilpotent $\Sigma^n B\Z/p$-null space for some $n\geq 0$. Then the $B\Z /p$-cellullarization of $X$ has the homotopy type of a Postnikov piece with homotopy groups are concentrated in degrees $1$ to $n$ , or else it has infinitely many nonzero homotopy groups. Moreover, if $X$ is $1$-connected of finite type, then the fundamental group $\pi_1(CW_{B\Z/p}(X))$ is a finite elementary abelian $p$-group.
 \end{teo}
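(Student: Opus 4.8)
The plan is to exploit Chach\'olski's fibration $CW_{B\Z/p}(X)\to X\to P_{\Sigma B\Z/p}(C)$ from Theorem \ref{Chacho}, which is available because $CW_{B\Z/p}(X)$ is nilpotent by Lemma \ref{nilpotent-cellularization}, so the whole fibration may be analysed with the Bousfield--Kan tools. The dichotomy should follow from a structural argument: either $CW_{B\Z/p}(X)$ has finitely many nontrivial homotopy groups or infinitely many, and in the finite case I want to locate them in degrees $1$ through $n$. The key input is the hypothesis that $X$ is $\Sigma^n B\Z/p$-null, which by Lemma \ref{suspension-null} passes to $CW_{B\Z/p}(X)$; thus $CW_{B\Z/p}(X)$ is a $B\Z/p$-cellular space that is simultaneously $\Sigma^n B\Z/p$-null. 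The heart of the matter is to show that such a space, if it is a Postnikov piece at all, cannot have homotopy concentrated above degree $n$.

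First I would set $W=CW_{B\Z/p}(X)$ and record its two defining features: it is $B\Z/p$-cellular (so by Lemma \ref{fundamentalgroupofCW} its fundamental group is generated by order-$p$ elements that lift, and by Lemma \ref{acyclic-cellularization} it is $\Q$- and $\F_q$-acyclic for $q\neq p$), and it is $\Sigma^n B\Z/p$-null. I would then argue by contradiction: suppose $W$ has only finitely many nontrivial homotopy groups but some $\pi_k(W)\neq 0$ with $k>n$. Truncating via a Postnikov tower, one reduces to understanding when an Eilenberg--MacLane space $K(\pi,k)$ can appear as a $B\Z/p$-cellular, $\Sigma^n B\Z/p$-null piece. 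Here Lemma \ref{nullK(G,n)} is decisive: $\Sigma^n B\Z/p$-nullity forces strong vanishing of $\tilde H^*(B\Z/p;\pi)$-type groups, which is incompatible with $K(\pi,k)$ being $B\Z/p$-cellular and $\Q$-acyclic unless $k\le n$. The mechanism is that a nontrivial $B\Z/p$-acyclic space built from copies of $B\Z/p$ cannot be both a high Postnikov piece and suspension-null, because suspension-nullity propagates downward (if $\Sigma^n B\Z/p$-null then $\Sigma^m B\Z/p$-null for $m\ge n$) while cellularity forces nonzero low-dimensional mod $p$ homology.

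For the last statement, assume $X$ is $1$-connected of finite type. By Lemma \ref{fundamentalgroupofCW}, $\pi_1(W)$ is then a finitely generated abelian group generated by elements of order $p$; being finitely generated abelian and generated by order-$p$ elements forces it to be a finite elementary abelian $p$-group. I would simply invoke that lemma directly and note that the generation-by-order-$p$-elements condition plus finite generation plus abelianness yields a finite $\F_p$-vector space.

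The hard part will be making the finite-versus-infinite dichotomy precise, i.e.\ proving that if $W$ is a Postnikov piece then its homotopy is genuinely confined to degrees $\le n$. The obstacle is that $B\Z/p$-cellular spaces can be homotopically complicated, so I expect to need a careful inductive argument up the Postnikov tower of $W$, at each stage using that a nontrivial $B\Z/p$-cellular $K(\pi,k)$-factor with $k>n$ would contradict $\Sigma^n B\Z/p$-nullity via Lemma \ref{nullK(G,n)} and the acyclicity from Lemma \ref{acyclic-cellularization}. Controlling the interaction between cellularity (a colimit condition) and nullity (a mapping-space condition) across infinitely many potential stages is the delicate point, and it is precisely the failure of this confinement that produces the "infinitely many nonzero homotopy groups" alternative.
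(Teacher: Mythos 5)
Your plan assembles the right ingredients but the heart of the argument is missing, and the one concrete mechanism you propose for it does not work. The step you yourself flag as ``the hard part'' --- showing that if $W=CW_{B\Z/p}(X)$ is a Postnikov piece then its homotopy sits in degrees $\le n$ --- is exactly what the theorem asserts, and your route to it is to ``truncate via a Postnikov tower'' and ask when a single layer $K(\pi,k)$ can be $B\Z/p$-cellular and $\Sigma^n B\Z/p$-null. That reduction is invalid: Postnikov sections and layers of a $B\Z/p$-cellular space need not themselves be $B\Z/p$-cellular, and ``cellularity forces nonzero low-dimensional mod $p$ homology'' is not a statement you can lean on. What actually closes the argument is an arithmetic clash on the \emph{top} homotopy group: by Lemma \ref{nilpotent-cellularization} and Lemma \ref{acyclic-cellularization} (applied with $R=\Z[\frac{1}{p}]$, not just with $\Q$ and $\F_q$) one gets $\pi_i(W)\otimes\Z[\frac{1}{p}]=0$, i.e.\ all homotopy of $W$ is $p$-torsion; on the other hand, looping a connected $\Sigma B\Z/p$-null Postnikov piece down to its top stages shows, via Lemma \ref{nullK(G,n)} as in Lemma \ref{ngeq3} and Proposition \ref{general-dichotomy}, that $p$ is invertible in (or at least that there is no $p$-torsion in) the top homotopy group. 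These two facts force the top group to vanish and one descends the tower. Your sketch never isolates this contradiction.

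Second, you have no device for handling general $n$. The paper reduces $n\ge 2$ to $n=1$ by induction using Bousfield's principal refinement \cite[Theorem 7.2]{B2}: for a $B\Z/p$-cellular, $\Sigma^n B\Z/p$-null space $W$ there is a principal fibration $K(P,n)\to W\to P_{\Sigma^{n-1}B\Z/p}(W)$ with $P$ a $p$-torsion group; since $K(P,n)$ is $B\Z/p$-cellular by \cite[Lemma 3.3]{CCS2} and $W$ is cellular by assumption, the base is again $B\Z/p$-cellular by \cite[Theorem 4.7]{Chacholski96}, and one iterates down to the case $n=1$, which is Theorem \ref{thmdichotomy1}. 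Nothing in your outline substitutes for this descent (a direct argument looping $n$ times is conceivable, but you would have to write it out; as it stands the case $n\ge2$ is untouched). The final claim about $\pi_1(CW_{B\Z/p}(X))$ is the one part you handle correctly: Lemma \ref{fundamentalgroupofCW} gives a finitely generated abelian group generated by order-$p$ elements, which is indeed a finite elementary abelian $p$-group.
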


 The rest of the subsection is devoted to the proof of Theorem \ref{thmdichotomy}. Even if the statement is similar to the one in \cite[Proposition 2.3]{FS07}, the authors deal with the situation in which the space is torsion, and this is not the case for $BG$ where $G$ is a compact connected Lie group. The strategy used  in \cite{FS07} for classifying spaces of finite groups can be summarized as follows.

\begin{prop}
Let $X$ be a torsion Postnikov piece whose fundamental group is generated by elements of order $p$ which lift to $X$. Assume there exists a prime $q\neq p$ such that $X^\wedge_q$ is torsion and it has infinitely many non-trivial homotopy groups. Then $CW_{B\Z/p}(X)$ also has infinitely many non-trivial homotopy groups.
\end{prop}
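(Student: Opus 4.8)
The plan is to argue by contradiction, using Chach\'olski's fibration together with the $\F_q$-acyclicity of the $B\Z/p$-cellularization as the bridge between $p$-primary and $q$-primary information. First I would invoke Theorem \ref{Chacho} to write the fibration
$CW_{B\Z/p}(X)\to X\to P_{\Sigma B\Z/p}(C)$, where $C$ is the homotopy cofibre of the evaluation $\bigvee_{[B\Z/p,X]_*}B\Z/p\to X$. Since $\pi_1(X)$ is generated by elements of order $p$ which lift to $X$, this evaluation map is surjective on fundamental groups, so $C$ is $1$-connected; by Lemma \ref{1-connectednullification} the base $B:=P_{\Sigma B\Z/p}(C)$ is $1$-connected as well.

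Next I would record that the fibre is mod $q$ trivial. Because $\tilde H^*(B\Z/p;\F_q)=0$ for $q\neq p$, Lemma \ref{acyclic-cellularization} gives $\tilde H^*(CW_{B\Z/p}(X);\F_q)=0$. Feeding this into the mod $q$ Serre spectral sequence of the fibration, whose base is $1$-connected so that the local coefficient system is trivial, collapses it to an isomorphism $H_*(X;\F_q)\cong H_*(B;\F_q)$. Hence the projection $X\to B$ is a mod $q$ equivalence, and therefore $X^\wedge_q\simeq B^\wedge_q=(P_{\Sigma B\Z/p}(C))^\wedge_q$.

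Now suppose, for contradiction, that $CW_{B\Z/p}(X)$ has only finitely many nonzero homotopy groups. Since $X$ is a Postnikov piece, the long exact homotopy sequence of the fibration forces $B=P_{\Sigma B\Z/p}(C)$ to have finitely many nonzero homotopy groups as well; being $1$-connected, it is a simply connected Postnikov piece. The $q$-completion of such a space is again a Postnikov piece: each stage $K(A,n)$ completes to a space with homotopy concentrated in degrees $n-1$ and $n$, and one assembles the (finite) tower (\cite[V,VI]{BK}). Thus $B^\wedge_q$ has finitely many nonzero homotopy groups. But $B^\wedge_q\simeq X^\wedge_q$, which by hypothesis has infinitely many nonzero homotopy groups, a contradiction. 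Hence $CW_{B\Z/p}(X)$ has infinitely many nonzero homotopy groups.

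The main obstacle I expect is precisely the passage from the $p$-cellularization back to $q$-complete information: all the leverage comes from the fact that $CW_{B\Z/p}(X)$ is mod $q$ acyclic while $X$ itself is not, so that the Chach\'olski base $P_{\Sigma B\Z/p}(C)$ captures all of $X^\wedge_q$. One must ensure the fibration is sufficiently nilpotent for the spectral sequence comparison and for the completion to behave well, which is exactly where the simple-connectivity of $C$ (guaranteed by the lifting hypothesis on $\pi_1(X)$) enters. The torsion hypothesis on $X^\wedge_q$ keeps us in the regime where ``infinitely many homotopy groups'' is the relevant alternative of the dichotomy, and it is this feature that is transported to $CW_{B\Z/p}(X)$.
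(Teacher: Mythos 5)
Your argument is correct, and its second half is genuinely different from the paper's. Both proofs begin identically: Chach\'olski's fibration $CW_{B\Z/p}(X)\to X\to P_{\Sigma B\Z/p}(C)$ with $C$ simply connected thanks to the lifting hypothesis on $\pi_1(X)$, and the observation that the fibre is mod $q$ acyclic so that the base carries the entire mod $q$ type of $X$. From there the paper does not pass to $q$-completions at all: it applies the Sullivan arithmetic square to $P_{\Sigma B\Z/p}(C)$, using the torsion hypotheses on $X$ and on $X^\wedge_q$ to kill the rational corners, and thereby produces a section $s\colon X^\wedge_q\to P_{\Sigma B\Z/p}(C)$ of the $q$-completion; this exhibits $\pi_n(X^\wedge_q)$ as a direct summand of $\pi_n(P_{\Sigma B\Z/p}(C))$ for $n\geq 2$, which is a stronger, integral statement about the uncompleted base. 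You instead run the contradiction through the completion itself: a simply connected Postnikov piece $q$-completes to a Postnikov piece, so $B^\wedge_q\simeq X^\wedge_q$ could not have infinitely many homotopy groups. Your route is arguably cleaner and, notably, does not actually use the hypothesis that $X$ and $X^\wedge_q$ are torsion, which the paper needs for the arithmetic square; what it buys less of is the explicit retraction. Two small points to patch: the homotopy of $K(A,n)^\wedge_q$ sits in degrees $n$ and $n+1$ (via $0\to \mathrm{Ext}(\Z/q^\infty,\pi_i)\to\pi_i(Y^\wedge_q)\to\mathrm{Hom}(\Z/q^\infty,\pi_{i-1})\to 0$), not $n-1$ and $n$; and rather than ``assembling the tower'' (completion need not preserve arbitrary fibrations), it is safer to quote that exact sequence for the nilpotent space $B$ directly, which immediately gives that $B^\wedge_q$ has finitely many nonzero homotopy groups.
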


\begin{proof}
Consider the fibration $CW_{B\Z/p}(X)\rightarrow X\rightarrow P_{\Sigma B\Z/p}(C)$ from Theorem \ref{Chacho}. Note that $P_{\Sigma B\Z/p}(C)$ is $1$-connected since $C$ is so. To prove the statement, we will show that  $P_{\Sigma B\Z/p}(C)$ has infinitely many non-trivial homotopy groups. We apply Sullivan's arithmetic square to $P_{\Sigma B\Z/p}(C)$ to obtain a pullback diagram
$$\xymatrix{
P_{\Sigma B\Z/p}(C) \ar[r] \ar[d]  & (\prod_{r \neq p} X^\wedge_r) \times (P_{\Sigma B\Z/p}(C))^\wedge_p \ar[d] \\
\textrm{*} \ar[r] & (\prod_{r\neq q,p} X^\wedge_r)_\Q \times ((P_{\Sigma B\Z/p}(C))^\wedge_p)_\Q.
}$$
which allow us to construct a map $s\colon X^\wedge_q\rightarrow P_{\Sigma B\Z/p}(C)$ such that $\eta\circ s\simeq id$.  That is $s$ is a section of the $q$-completion. Then for $n\geq 2$ we have that $\pi_n(X^\wedge_q)$ is a direct summand of $\pi_n(P_{\Sigma B\Z/p}(C))$.
\end{proof}

For example, by Levi's work in \cite{MR96c:55019},
the previous theorem applies when $X$ is the classifying space of a finite group.

Now we need to state some general results concerning to the cellularization of $\Sigma B\Z/p$-null spaces, that deal with the consequences of imposing that $CW_{B\Z/p}(X)$ is a Postnikov piece for a certain space $X$. Note that this is the ``forbidden" case in Theorem \ref{thmdichotomy}.

\begin{lemma}\label{ngeq3}
Let $P[n]$ be a connected $\Sigma B\Z/p$-null Postnikov piece with $n\geq 3$ then $p$ is invertible in $\pi_n(P[n])$.
\end{lemma}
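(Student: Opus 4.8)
The plan is to reduce the statement to the Eilenberg--MacLane case already handled in Lemma \ref{nullK(G,n)} by looping the Postnikov piece down until its top homotopy group sits in degree $2$. Write $M=\pi_n(P[n])$ for the top homotopy group, so that $\pi_i(P[n])=0$ for $i>n$. The one computational input is the iterated loop--suspension adjunction $\map_*(\Sigma^{k}B\Z/p,P[n])\simeq\Omega^{k-1}\map_*(\Sigma B\Z/p,P[n])$ for $k\geq 1$. Combined with $\map_*(B\Z/p,\Omega^{k}P[n])\simeq\map_*(\Sigma^{k}B\Z/p,P[n])$, the $\Sigma B\Z/p$-nullity of $P[n]$ forces $\map_*(\Sigma^{k}B\Z/p,P[n])\simeq *$, i.e.\ $\Omega^{k}P[n]$ is $B\Z/p$-null, for every $k\geq 1$.

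First I would set $Z=\Omega^{n-2}P[n]$ and let $Z_0$ be its basepoint component. This is exactly where $n\geq 3$ is used: we need $n-2\geq 1$ so that $Z$ is a loop space (whence $Z_0$ has abelian fundamental group), and correspondingly $n-3\geq 0$ so that $\map_*(B\Z/p,Z)=\map_*(\Sigma^{n-2}B\Z/p,P[n])=\Omega^{n-3}\map_*(\Sigma B\Z/p,P[n])\simeq *$. Since a based map out of the connected space $B\Z/p$ lands in the basepoint component, this says precisely that $Z_0$ is $B\Z/p$-null. As $\pi_i(Z_0)=\pi_{i+n-2}(P[n])$, the space $Z_0$ is a connected $B\Z/p$-null $2$-type with $\pi_2(Z_0)=M$ and $\pi_i(Z_0)=0$ for $i\geq 3$.

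Next I would transfer the nullity to the universal cover $\widetilde{Z_0}\simeq K(M,2)$. Applying the homotopy-limit functor $\map_*(B\Z/p,-)$ to the fibration $K(M,2)\to Z_0\to B\pi_1(Z_0)$ yields a fibre sequence whose total space $\map_*(B\Z/p,Z_0)$ is contractible, so that the fibre $\map_*(B\Z/p,K(M,2))$ is equivalent to the loop space of $\map_*(B\Z/p,B\pi_1(Z_0))$. The latter is homotopically discrete, since $\pi_1(Z_0)$ is a discrete group and the components of $\map_*(B\Z/p,K(\pi,1))$ are contractible; hence its loop space is contractible and $K(M,2)$ is $B\Z/p$-null. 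Finally, Lemma \ref{nullK(G,n)} applied to the abelian group $M$ gives that $p$ is invertible in $M=\pi_n(P[n])$, as desired.

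The step I expect to carry the real content is this transfer of nullity to the universal cover. Connected covers of $B\Z/p$-null spaces need not themselves be $B\Z/p$-null in general (the phenomenon behind the failure of higher-connectivity analogues noted after Lemma \ref{1-connectednullification}), and it is only the special shape of the fibration here---base a $K(\pi,1)$ with $\map_*(B\Z/p,K(\pi,1))$ homotopically discrete, and contractible total space---that lets the argument go through. Everything else is formal manipulation of the loop/suspension adjunction, the numerical bookkeeping of which is precisely what produces the hypothesis $n\geq 3$.
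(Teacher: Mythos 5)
Your argument is correct and is essentially the paper's own proof: both pass to the basepoint component of $\Omega^{n-2}P[n]$ (a $B\Z/p$-null $2$-type), use the universal cover fibration $K(\pi_n(P[n]),2)\to E\to K(\pi_{n-1}(P[n]),1)$ together with the homotopy discreteness of $\map_*(B\Z/p,K(\pi,1))$ to conclude that $K(\pi_n(P[n]),2)$ is $B\Z/p$-null, and then invoke Lemma~\ref{nullK(G,n)}. Your explicit bookkeeping of where $n\geq 3$ enters, and your remark that the transfer of nullity to the universal cover only works because of the special shape of this fibration, are accurate glosses on the same argument.
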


\begin{proof}
Note that if $P[n]$ is $\Sigma B\Z/p$-null then $\Omega P[n]$ is $B\Z/p$-null, and also $\Omega^{n-1}P[n]$ is so. Since the connected component of the constant in $\Omega^{n-1}P[n]$ is an Eilenberg-MacLane space $K(\pi_n(P[n]), 1)$, we see that $K(\pi_n(P[n]), 1)$ is also $B\Z/p$-null.

Similarly, the connected component $E$ of $\Omega^{n-2}P[n]$ is $B\Z/p$-null. There is a fibration $K(\pi_n(P[n]),2)\rightarrow E \rightarrow K(\pi_{n-1}(P[n]),1)$. Since the pointed mapping spaces $\map_*(B\Z/p, K(\pi_{n-1}(P[n]),1))_c$ and $\map_*(B\Z/p, E)$ are weakly contractible, we obtain from the previous fibration that $\map(B\Z/p, K(\pi_n(P[n]),2))$ is also weakly contractible. That is $K(\pi_n(P[n]),2)$ is $B\Z/p$-null. The conclusion now follows from Lemma \ref{nullK(G,n)}.
\end{proof}

\begin{prop}\label{general-dichotomy}
Let $X$ be a connected nilpotent $\Sigma B\Z/p$-null space such that $\Z[\frac{1}{p}]_\infty (X)\simeq *$. Then $X$ has the homotopy type of a $K(G,1)$ or it has infinitely many nonzero homotopy groups.
\end{prop}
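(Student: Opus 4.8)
The plan is to prove the contrapositive of the ``forbidden'' alternative: assuming that $X$ has only finitely many nonzero homotopy groups, so that $X$ is a nilpotent Postnikov piece, I will show that $X$ must be a $K(G,1)$. Let $n\geq 1$ be the largest degree with $\pi_n(X)\neq 0$; the whole argument reduces to proving that $n=1$.

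First I would unwind the hypothesis $\Z[\frac{1}{p}]_\infty(X)\simeq *$ into a statement about homotopy groups. Since $X$ is nilpotent and $\Z[\frac{1}{p}]$ is a subring of $\Q$, the Bousfield--Kan $\Z[\frac{1}{p}]$-completion agrees with $\Z[\frac{1}{p}]$-localization, whose homotopy groups are $\pi_i(X)\otimes\Z[\frac{1}{p}]$ (\cite[V]{BK}). Hence $\Z[\frac{1}{p}]_\infty(X)\simeq *$ forces $\pi_i(X)\otimes\Z[\frac{1}{p}]=0$ for every $i$, which is equivalent to saying that each homotopy group of $X$ is $p$-power torsion, i.e. every element is annihilated by some power of $p$.

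Next I would derive a contradiction from the top homotopy group whenever $n\geq 2$. For $n\geq 3$, Lemma \ref{ngeq3} applies verbatim to the $\Sigma B\Z/p$-null Postnikov piece $X$ and shows that $p$ is invertible in $\pi_n(X)$; but a group that is simultaneously $p$-power torsion and $p$-divisible-with-$p$-invertible is trivial, contradicting $\pi_n(X)\neq 0$. The value $n=2$ is precisely the edge case not reached by Lemma \ref{ngeq3} (its proof loops down $n-2$ times and so requires $n\geq 3$), and this is the point I expect to be the main obstacle. I would handle it directly: looping once, the basepoint component of $\Omega X$ is $K(\pi_2(X),1)$, and since $X$ is $\Sigma B\Z/p$-null this component is $B\Z/p$-null; therefore $\Hom(\Z/p,\pi_2(X))=[B\Z/p,K(\pi_2(X),1)]_*=0$, so $\pi_2(X)$ has no element of order $p$. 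Combined with the $p$-power torsion coming from the completion hypothesis, this again forces $\pi_2(X)=0$, a contradiction. Thus $n=1$ and $X\simeq K(\pi_1(X),1)$, which is the first alternative of the dichotomy.

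It is worth noting that the two cases can in fact be merged into a single argument that bypasses the invertibility statement of Lemma \ref{ngeq3}: from the loop--suspension adjunction $\map_*(\Sigma^{k}B\Z/p,X)\simeq\Omega^{k-1}\map_*(\Sigma B\Z/p,X)$, the $\Sigma B\Z/p$-nullity of $X$ propagates to $\Sigma^{k}B\Z/p$-nullity for all $k\geq 1$, so for any top degree $n\geq 2$ the basepoint component $K(\pi_n(X),1)$ of $\Omega^{n-1}X$ is $B\Z/p$-null. Then $\Hom(\Z/p,\pi_n(X))=0$, and together with the $p$-power torsion from the completion hypothesis this yields $\pi_n(X)=0$ uniformly. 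I would present the argument this way so that the delicate step is concentrated entirely in the translation of $\Z[\frac{1}{p}]_\infty(X)\simeq *$ into the vanishing $\pi_i(X)\otimes\Z[\frac{1}{p}]=0$.
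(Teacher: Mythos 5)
Your proof is correct and follows the same overall skeleton as the paper's: assume $X\simeq P[n]$ is a finite Postnikov piece, translate $\Z[\frac{1}{p}]_\infty(X)\simeq *$ into $\pi_i(X)\otimes\Z[\frac{1}{p}]=0$ via \cite[V.4.1]{BK}, and then rule out $n\geq 2$ by showing the top homotopy group must vanish. The one place you genuinely diverge is the case $n=2$: the paper maps $\Sigma B\Z/p$ into the universal cover $K(\pi_2(P[2]),2)\to P[2]$ and lifts a nullhomotopy through the covering to deduce $\Hom(\Z/p,\pi_2)=0$, whereas you loop once and read off the same vanishing from the $B\Z/p$-nullity of the basepoint component $K(\pi_2(X),1)$ of $\Omega X$; these are adjoint formulations of the same computation ($[\Sigma B\Z/p,K(\pi_2,2)]_*\cong[B\Z/p,K(\pi_2,1)]_*$), but yours is shorter and avoids the covering-space lifting. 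Your closing observation is the more substantive improvement: since the proposition only needs $\pi_n(X)$ to have no elements of order $p$ — which together with $\pi_n(X)\otimes\Z[\frac{1}{p}]=0$ already forces $\pi_n(X)=0$ — rather than the invertibility of $p$ supplied by Lemma \ref{ngeq3}, the single equivalence $\map_*(B\Z/p,\Omega^{n-1}X)\simeq\Omega^{n-2}\map_*(\Sigma B\Z/p,X)\simeq *$ treats all $n\geq 2$ uniformly and makes the appeal to Lemma \ref{ngeq3} (and through it to Lemma \ref{nullK(G,n)} and the transfer argument there) unnecessary for this proof.
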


\begin{proof}
Assume that $X\simeq P[n]$ is a finite Postnikov piece. First we show that $n\leq 2$. Since $\Z[\frac{1}{p}]_\infty(X)$ is weakly contractible (see Lemma \ref{acyclic-cellularization}), then $\pi_i(P[n])\otimes \Z[\frac{1}{p}]=0$ for all $i>0$ (\cite[V.4.1]{BK}). But if $n\geq 3$, $p$ is invertible in $\pi_n(P[n])$ by the previous Proposition \ref{ngeq3}, then $\pi_n(P[n])=0$. We can apply this argument as long as $n\geq 3$.

From now on we assume $n=2$. Next we prove that $\pi_2(P[2])$ has no $p$-torsion. There is a fibration $K(\pi_2(P[2]),2)\rightarrow P[2] \rightarrow K(\pi_1(P[2]),1)$, which induces a covering $K(\pi_1(P[2]),0)\rightarrow K(\pi_2(P[2]),2)\rightarrow P[2]$. If $\pi_2(P[2])$ had $p$-torsion, then a non-trivial homomorphism $\Z/p\rightarrow \pi_2(P[2])$ would induce a nontrivial map $f\colon \Sigma B\Z/p\rightarrow K(\pi_2(P[2]),2)$ which is nullhomotopic when composed with  $K(\pi_2(P[2]),2)\rightarrow P[2]$ since $P[2]$ is $\Sigma B\Z/p$-null (see Lemma \ref{suspension-null}). Then we obtain a contradiction since $f$ must be nullhomotopic. So $\pi_2(P[2])$ has no $p$-torsion, and this is again a contradiction since $\pi_2(P[2])\otimes \Z[\frac{1}{p}]=0$.
\end{proof}

We now state  our dichotomy theorem for nilpotent $\Sigma B\Z/p$-null spaces.

\begin{teo}\label{thmdichotomy1}
Let $X$ be a connected nilpotent $\Sigma B\Z/p$-null space. Then the $B\Z /p$-cellulla-rization of $X$ has the homotopy type of a $K(G,1)$ or it has infinitely many nontrivial homotopy groups. Moreover, if $X$ is $1$-connected of finite type, then $\pi_1(CW_{B\Z/p}(X))$ is a finite elementary abelian $p$-group.
\end{teo}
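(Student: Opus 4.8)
The plan is to apply Proposition \ref{general-dichotomy} not to $X$ but to the cellularization itself: set $Y=CW_{B\Z/p}(X)$, and observe that $Y$ inherits every hypothesis of that proposition, so that the dichotomy it produces for $Y$ as a \emph{space} is precisely the asserted dichotomy for $CW_{B\Z/p}(X)$. The whole argument is thus an assembly of the preservation lemmas already established, followed by one invocation of Proposition \ref{general-dichotomy}.

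First I would check the three cheap hypotheses. Since $X$ is connected, $Y$ is connected because it is $B\Z/p$-cellular and $B\Z/p$ is connected. Nilpotency of $Y$ is exactly Lemma \ref{nilpotent-cellularization}. Because $X$ is $\Sigma B\Z/p$-null (that is, $\Sigma^n B\Z/p$-null with $n=1$), Lemma \ref{suspension-null} gives that $Y$ is again $\Sigma B\Z/p$-null. The one substantive point is to verify $\Z[\frac{1}{p}]_\infty(Y)\simeq *$. Here I would use that $B\Z/p$ is $\Z[\frac{1}{p}]$-acyclic, its reduced integral homology being all $p$-torsion, so that $\tilde H^*(B\Z/p;\Z[\frac{1}{p}])=0$. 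Lemma \ref{acyclic-cellularization} then yields $\tilde H^*(Y;\Z[\frac{1}{p}])=0$ and, as $X$ is nilpotent and $\Z[\frac{1}{p}]\subset\Q$, also $\pi_i(Y)\otimes\Z[\frac{1}{p}]=0$ for $i>0$; that is, all homotopy groups of $Y$ are $p$-primary torsion. For a nilpotent (hence $\Z[\frac{1}{p}]$-good) space this forces $\Z[\frac{1}{p}]_\infty(Y)\simeq *$, exactly as in the proof of Proposition \ref{general-dichotomy} via \cite[V.4.1]{BK}. With all four hypotheses in place, Proposition \ref{general-dichotomy} gives that $Y=CW_{B\Z/p}(X)$ is either a $K(G,1)$ or has infinitely many nonzero homotopy groups.

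For the final clause, assume $X$ is $1$-connected of finite type. Then the second part of Lemma \ref{fundamentalgroupofCW} already identifies $\pi_1(CW_{B\Z/p}(X))$ as a finitely generated abelian group generated by elements of order $p$. It remains only to record the elementary algebraic observation that an abelian group generated by elements of order $p$ is an $\F_p$-vector space, so that being in addition finitely generated forces it to be a finite elementary abelian $p$-group. I expect the verification of $\Z[\frac{1}{p}]_\infty(Y)\simeq *$ — specifically, pinning down that nilpotency together with $\Z[\frac{1}{p}]$-acyclicity yields a contractible completion — to be the only step needing care, with everything else being a direct application of the preservation lemmas and of Proposition \ref{general-dichotomy}.
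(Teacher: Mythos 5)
Your proposal is correct and follows essentially the same route as the paper: nilpotency of $CW_{B\Z/p}(X)$ via Lemma \ref{nilpotent-cellularization}, triviality of $\Z[\frac{1}{p}]_\infty(CW_{B\Z/p}(X))$ via Lemma \ref{acyclic-cellularization}, and then one application of Proposition \ref{general-dichotomy}. You are in fact slightly more careful than the paper, which leaves implicit both the $\Sigma B\Z/p$-nullity of the cellularization (your appeal to Lemma \ref{suspension-null}) and the deduction of the final clause from Lemma \ref{fundamentalgroupofCW}.
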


\begin{proof}
By Lemma \ref{nilpotent-cellularization}, $CW_{B\Z/p}(X)$ is also nilpotent. Moreover,  by Lemma \ref{acyclic-cellularization} we have an equivalence $\Z[\frac{1}{p}]_\infty (CW_{B\Z/p}(X))\simeq *$ , and then we can apply Proposition \ref{general-dichotomy} to $CW_{B\Z/p}(X)$.
\end{proof}

\begin{proof}[Proof of Theorem \ref{thmdichotomy}]
Assume that $n\geq 2$, we can assume that $X$ is $B\Z/p$-cellular and  $\Sigma^n B\Z/p$-null. By \cite[Theorem 7.2]{B2}, there is a principal fibration $$K(P,n)\rightarrow X\rightarrow P_{\Sigma^{n-1}}(X)$$ where $P$ is a $p$-torsion grup. Since $K(P,n)$ is $B\Z/p$-cellular (\cite[Lemma 3.3]{CCS2}) and $X$ too by assumption, then  $P_{\Sigma^{n-1}}(X)$ is also $B\Z/p$-cellular by \cite[Theorem 4.7]{Chacholski96}. By induction we reduce to the case in which $n=1$, which is proved in Theorem \ref{thmdichotomy1}.
\end{proof}

The next question we need to refer concerning Theorem \ref{thmdichotomy} is when the cellularization of a classifying space is again a classifying space, not necessarily of a discrete group. This is important to understand the first part of the previous dichotomy.

\begin{prop}
\label{Nocellular}
Let  $X$ be a space. If $CW_{B\Z/p}(X)\simeq BH$ for
some compact Lie group $H$, then it must be a finite $p$-group
generated by order $p$ elements.
\end{prop}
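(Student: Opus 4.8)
The plan is to read off cohomological constraints on $H$ from the $B\Z/p$-cellularity of $BH$, deduce that $H$ is a finite $p$-group, and then invoke Lemma \ref{fundamentalgroupofCW} for the generation statement. The key initial observation is that $CW_{B\Z/p}(X)\simeq BH$ is itself $B\Z/p$-cellular, being an $A$-cellularization. Since $\tilde H^*(B\Z/p;\Q)=0$ and $\tilde H^*(B\Z/p;\F_q)=0$ for every prime $q\neq p$, the cohomological half of Lemma \ref{acyclic-cellularization} yields $\tilde H^*(BH;\Q)=0$ and $\tilde H^*(BH;\F_q)=0$ for all $q\neq p$; note that this half needs no nilpotence hypothesis, so it applies to $BH$ directly.

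First I would prove that $H$ is finite. Write $H_0$ for the identity component and $\pi_0(H)=H/H_0$, a finite group. Since $EH$ is $H$-free and $H_0$ is normal, $BH_0=EH/H_0\to EH/H=BH$ is a regular covering with deck group $\pi_0(H)$; as $|\pi_0(H)|$ is invertible in $\Q$, the transfer identifies $H^*(BH;\Q)$ with the invariants $H^*(BH_0;\Q)^{\pi_0(H)}$. If $H_0$ were nontrivial it would have rank $r\geq 1$, so $H^*(BH_0;\Q)\cong\Q[x_1,\dots,x_r]$ would carry a positive-degree generator $x$; then $\prod_{g\in\pi_0(H)}g\cdot x$ is a nonzero invariant of positive degree (the polynomial ring is an integral domain, and the conjugation action is degree-preserving), contradicting $\tilde H^*(BH;\Q)=0$. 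Hence $H_0=1$ and $H$ is finite. For such $H$, the condition $\tilde H^*(BH;\F_q)=0$ forces $q\nmid|H|$: otherwise a nontrivial Sylow $q$-subgroup $S$ would give a split monomorphism $H^*(BH;\F_q)\hookrightarrow H^*(BS;\F_q)$ by transfer, while $\tilde H^*(BS;\F_q)\neq 0$. Running this over all $q\neq p$ shows $|H|$ is a power of $p$, so $H$ is a finite $p$-group.

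Finally, $BH=K(H,1)$ is $B\Z/p$-cellular, so Lemma \ref{fundamentalgroupofCW} applies and gives that $H=\pi_1(BH)$ is generated by elements of order $p$, which completes the argument. The one genuinely nonformal step is the rational one: everything hinges on the fact that the classifying space of a positive-dimensional compact connected Lie group is never rationally acyclic, which I would handle exactly through the rank together with the transfer--invariants description above. The remaining implications are purely formal consequences of the two acyclicity conclusions coming from Lemma \ref{acyclic-cellularization}.
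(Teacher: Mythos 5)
Your proof is correct, and it follows the same overall skeleton as the paper's --- derive cohomological acyclicity of $BH$ from cellularity, conclude finiteness from rational acyclicity, then settle the finite case --- but it diverges in how the two halves are discharged. For finiteness, the paper simply invokes the ``well-known'' fact that $H^*(BH;\Q)$ is the Weyl-group invariants of $H^*(BT;\Q)$ and hence vanishes in positive degrees only when $H$ is finite; your covering-space transfer $H^*(BH;\Q)\cong H^*(BH_0;\Q)^{\pi_0(H)}$ together with the norm of a positive-degree polynomial generator is a complete written-out version of essentially the same idea. The genuine difference is the last step: the paper notes that $BH$ is $B\Z/p$-cellular by idempotency and then cites \cite[Prop 4.14]{Ramon} to conclude that a finite group with $B\Z/p$-cellular classifying space is a $p$-group generated by order-$p$ elements, whereas you reprove this internally --- the Sylow transfer applied to $\tilde H^*(BH;\F_q)=0$ for $q\neq p$ gives the $p$-group statement, and Lemma~\ref{fundamentalgroupofCW} (whose ``lifts to $X$'' condition is vacuous for a $K(H,1)$) gives generation by order-$p$ elements. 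Your route is longer but self-contained within the paper plus standard transfer arguments, and it makes visible exactly which acyclicity statement from Lemma~\ref{acyclic-cellularization} is responsible for which conclusion; the paper's is shorter at the cost of an external citation. One small point worth noting either way: the cohomological half of Lemma~\ref{acyclic-cellularization} indeed needs no nilpotence hypothesis (it only uses that $K(R,n)$ is $B\Z/p$-null and that pointed maps out of a cellular space into a null space are trivial), so your application of it to $BH$ is legitimate.
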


\begin{proof}
Since the pointed homotopy colimit of acyclics is acyclic for any
cohomology theory (\cite[2.D.2.5]{Dror-Farjoun95}), it is clear
that $\tilde{H}^*( B H;\Q)=\tilde{H}^*(CW_{ B\Z /p}X;\mathbb{Q})=0$. On the other hand, it is well-known that the
rational cohomology of $ B H$ are the invariants of the rational
cohomology of the classifying space of the maximal torus $T$ under
the action of the Weyl group $W$. In fact $\tilde{H}^*( B H;\Q)=0$ if and only if $H$ is a finite group.
 Finally, the functor $CW$ is idempotent,
so $ B H$ must be $ B\Z /p$-cellular. Thus, we can apply
\cite[Prop 4.14 ]{Ramon} to finish the proof.
\end{proof}

\begin{remark}
The arguments in Proposition \ref{Nocellular} also work if $CW_{B\Z/p}(X)\simeq (BH)^{\wedge}_p$ where $H$ is a compact Lie group. It is clear then that $\tilde{H}^*(BH;\Z^{\wedge}_p)\otimes \Q=0$. But again this is only possible if $H$ is discrete. If $H$ is in particular finite, conditions are known (see \cite[Corollary 3.3]{FS07}) under which $(BH)^{\wedge}_p$ is $B\Z /p$-cellular. See Example \ref{rorder} below.
\end{remark}

\begin{remark}
When $X$ is an $H$-space satisfying the hypothesis of Theorem \ref{thmdichotomy}, Castellana, Crespo and Scherer proved in \cite{CCS2} that the $B\Z/p$-cellularization of $X$ is always a Postnikov piece. Examples of such spaces are given by $H$-spaces whose mod $p$ cohomology is finitely generated as an algebra over the Steenrod algebra (see \cite{CCS1}).
\end{remark}

\subsection{Examples}
In this subsection we concentrate in the description of the $B\Z /p$-cellularization of classifying spaces of compact Lie groups, generalizing to the continuous case work of the second author in the finite case (\cite{Ramon} and \cite{FS07}). In the study of the homotopy type of classifying spaces of Lie groups, a very useful strategy is to isolate the information at every prime.

Theorem \ref{thmdichotomy} implies automatically the following dichotomy theorem for classifying spaces of compact Lie groups. We say that an element $g\in G$ is $p$-cohomologically central if the map induced by the inclusion $BC_G(x)\rightarrow BG$ is a mod $p$ homology isomorphism. Mislin in \cite{Mislin} shows that there is a natural bijection between the set of conjugacy classes of $p$-cohomologically central elements of order $p$ in $G$ with $pZ(G/O_{p'}(G))$ where $O_{p'}(G)$ is the largest normal $p'$-subgroup of $G$ and $pZ(G)$ are the elements of order $p$ in $Z(G)$.

\begin{teo}
\label{thmdichotomyBG}
Let $G$ be a compact connected Lie group. If there exists a non $p$-coho\-mo\-lo\-gi\-cally central element of order $p$, then  the $B\Z /p$-cellullarization of $BG$ has infinitely many nonzero homotopy groups. Otherwise, it has the homotopy type of a $K(V,1)$, where $V$ is a finite elementary abelian $p$-group.
\end{teo}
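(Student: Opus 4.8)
The plan is to apply Theorem \ref{thmdichotomy} to $X=BG$ and then to match its two alternatives with the cohomological centrality condition by means of the Dwyer--Zabrodsky description of $\map_*(B\Z/p,BG^\wedge_p)$. First I would verify the hypotheses of Theorem \ref{thmdichotomy}. Since $G$ is connected, $\pi_1(BG)=\pi_0(G)$ is trivial, so $BG$ is $1$-connected and in particular nilpotent; moreover $\Omega BG\simeq G$ has the homotopy type of a finite complex, so Miller's theorem \cite{Miller} gives $\map_*(B\Z/p,G)\simeq *$, that is, $G$ is $B\Z/p$-null and hence $BG$ is $\Sigma B\Z/p$-null. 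Thus Theorem \ref{thmdichotomy} applies with $n=1$ and shows that $CW_{B\Z/p}(BG)$ either has infinitely many nonzero homotopy groups or is a $K(V,1)$; since $BG$ is $1$-connected of finite type, the same theorem identifies $V=\pi_1(CW_{B\Z/p}(BG))$ as a finite elementary abelian $p$-group.

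It remains to prove the equivalence that $CW_{B\Z/p}(BG)\simeq K(V,1)$ if and only if every element of order $p$ is $p$-cohomologically central. The common tool is the computation of the path components of $\map_*(B\Z/p,BG)$. As $BG$ is nilpotent, Corollary \ref{bastaenp} gives $\map_*(B\Z/p,BG)\simeq\map_*(B\Z/p,BG^\wedge_p)$, and because the augmentation $CW_{B\Z/p}(BG)\to BG$ is a $B\Z/p$-equivalence we also have $\map_*(B\Z/p,CW_{B\Z/p}(BG))\simeq\map_*(B\Z/p,BG)$. By the Dwyer--Zabrodsky theorem the component indexed by $\rho\colon\Z/p\to G$ is the homotopy fibre of $BC_G(\operatorname{im}\rho)^\wedge_p\to BG^\wedge_p$, which is weakly contractible exactly when the corresponding order $p$ element is $p$-cohomologically central. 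Hence this mapping space is homotopically discrete precisely when all order $p$ elements are $p$-cohomologically central.

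For the forward direction, if $CW_{B\Z/p}(BG)\simeq K(V,1)$ then $\map_*(B\Z/p,K(V,1))\simeq\Hom(\Z/p,V)$ is discrete, forcing every order $p$ element to be $p$-cohomologically central; contrapositively, a non $p$-cohomologically central element of order $p$ excludes the $K(V,1)$ alternative and, by the dichotomy of the first paragraph, forces infinitely many nonzero homotopy groups. This already yields the first assertion of the theorem.

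The converse is the main obstacle. Assuming all order $p$ elements are $p$-cohomologically central, I would use Mislin's bijection \cite{Mislin} with $pZ(G/O_{p'}(G))$ to produce a finite elementary abelian $p$-subgroup $V'\le G$ onto which every order $p$ element is conjugate, and then show that the inclusion $BV'\to BG$ is a $B\Z/p$-equivalence; the delicate step here is the bijection on path components of the mapping spaces out of $B\Z/p$, where one must see that the contractible components on the $BG$ side match $\Hom(\Z/p,V')$ exactly, and handle the discrepancy between honest and cohomological centrality through $G/O_{p'}(G)$. Granting this, since $BV'$ is $B\Z/p$-cellular and the cellularization functor carries $B\Z/p$-equivalences to weak equivalences, I would conclude $CW_{B\Z/p}(BG)\simeq CW_{B\Z/p}(BV')\simeq BV'=K(V',1)$, placing us in the $K(V,1)$ alternative and completing the matching of the two dichotomies.
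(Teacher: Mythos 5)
Your first three paragraphs reproduce the paper's proof: the same verification that $BG$ is $1$-connected, of finite type, and $\Sigma B\Z/p$-null via Miller's theorem applied to $\Omega BG\simeq G$; the same appeal to Theorem \ref{thmdichotomy} with $n=1$; and the same use of the Dwyer--Zabrodsky description of the components of $\map(B\Z/p,BG^\wedge_p)$ to show that if $CW_{B\Z/p}(BG)$ is a $K(V,1)$ then $\map_*(B\Z/p,BG)$ is homotopically discrete and hence every order-$p$ element is $p$-cohomologically central. The contrapositive gives the first assertion of the theorem, exactly as in the paper.

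Where you genuinely depart from the paper is in your fourth paragraph. You correctly observe that the ``otherwise'' clause needs the converse implication (all order-$p$ elements cohomologically central $\Rightarrow$ the $K(V,1)$ alternative holds), and that this is \emph{not} the contrapositive of what was just proved, since the dichotomy only tells you that exactly one of the two alternatives occurs. The paper's own proof is silent on this direction, so you are attempting to supply something the published argument omits. Your proposed route --- exhibit a central elementary abelian $V'\le G$ such that $BV'\to BG$ is a $B\Z/p$-equivalence, then use that $CW_{B\Z/p}$ inverts $B\Z/p$-equivalences and that $BV'$ is $B\Z/p$-cellular --- is the natural way to close the gap, but as you yourself say it is only ``granted''. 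To complete it you need two concrete inputs: (i) that under the centrality hypothesis the order-$p$ elements of $G$ actually lie in a single central elementary abelian subgroup $V'$ (Mislin's bijection \cite{Mislin} is stated for conjugacy classes and involves $G/O_{p'}(G)$, so for connected $G$ one must check that $O_{p'}(G)$ is central and that cohomologically central order-$p$ elements are honestly central); and (ii) the identification $\Rep(\Z/p,G)\cong\Hom(\Z/p,V')$ on $\pi_0$ of the pointed mapping spaces, which holds because central elements are conjugate only to themselves, combined with Corollary \ref{bastaenp} and the contractibility of each component of $\map_*(B\Z/p,BG^\wedge_p)$ coming from Dwyer--Zabrodsky. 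With those two points supplied your argument is complete and in fact more thorough than the paper's; without them the ``otherwise'' half remains a sketch, though it is a sketch of the right proof.
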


\begin{proof}
Since $G$ is assumed to be connected, $BG$ is simply connected. Moreover it is of finite type, and $\Sigma B\Z /p$-null because of Miller's solution of the Sullivan conjecture because $\Omega BG\simeq G$ is a finite complex. Now, we apply Theorem \ref{thmdichotomy}. If $CW_{BZ/p}(BG)$ is an Eilenberg-MacLane space $K(V,1)$, then $\map_*(B\Z/p,BG)$ is homotopically discrete. Since $BG$ is simply connected, $[B\Z/p,BG]_*\cong [B\Z/p,BG]\cong \Rep(\Z/p,G)$. If $\map_*(B\Z/p,BG)$ is homotopically discrete, then for each $\rho \in \Rep(\Z/p,G)$, the evaluation $\map(B\Z/p,BG)_{B\rho}\rightarrow BG$ induces an $\F_p$-homology equivalence $BC_G(\rho(\Z/p))\rightarrow BG$  by \cite{DZ}.  And this only happens if all the elements of order $p$ are $p$-cohomologically central.
\end{proof}

In the continuous case, there are paradigmatic examples of $BG$ whose cellularization is again a classifying space.

\begin{ejem}
\label{sphere}
If $X=BS^1=K(\Z ,2)$, it is clear comparing pointed mapping spaces that $CW_{B\Z /p}BS^1=B\Z /p$ since $\map_*(B\Z/p, BS^1)$ is homotopically discrete with components $\Hom(\Z/p, S^1)$. Let us now consider $ B S^3$ the classifying space of the $3$-sphere. Lemma \ref{bastaenp} reduces the computation of $CW_{B\Z/p}(BS^3)$ to that of  $CW_{B\Z/p}((BS^3)^{\wedge}_p)$. The mapping space from $B\Z/p$ into $(BS^3)^{\wedge}_p$ has been well studied. If $p=2$, the inclusion of the centre $B\Z/2\rightarrow BS^3$ induces a homotopy equivalence $\map(B\Z/2,B\Z/2)\rightarrow \map(B\Z/2,(BS^3)^{\wedge}_2)$ since $ \map(B\Z/2,(BS^3)^{\wedge}_2)_f\simeq (BC_{S^3}(f))^{\wedge}_2$ (see \cite{DMW87}), and therefore $CW_{B\Z/2}(BS^3)\simeq B\Z/2$. If $p$ is odd, then $(BS^3)^{\wedge}_p\simeq BN(T)^{\wedge}_p$, and this case will be studied in Example \ref{p'-toral}.

%
\end{ejem}

Sometimes, if we are unable to describe $CW_{B\Z /p}BG$, we can at least identify it with another classifying space at a prime.

\begin{ejem}
\label{O(2)}
Let $BO(2)$ be the classifying space of the orthogonal group $O(2)$. There is a mod $2$ equivalence $BD_{2^{\infty}}\rightarrow BO(2)$ where $D_{2^\infty}=colim_n D_{2^n}$. Moreover $BD_{2^\infty}$ is $B\Z/2$-cellular by \cite[Example 5.1]{Ramon}. Since $\pi_1(BO(2))=\Z/2$ is generated by an element of order $2$ which lifts to $BO(2)$, we are in the situation of Remark \ref{remark-p-toral}. This will be used in particular in Proposition \ref{BSO}.
\end{ejem}

We devote the remaining of the section to study some families of Lie groups which show different and interesting features in this context. We begin with extensions of elementary abelian $p$-groups by a finite group of order prime to $p$, which provide an example in which Proposition \ref{cw+comp} does not hold. Compare with \cite{FS07}.  We start with a situation which deals with fibrations.

\begin{prop}\label{nonmodular-fibration}
Let $F\rightarrow E\rightarrow B$ be a fibration of $p$-good connected spaces such that $F$ is $B\Z/p$-cellular, $B$ is $B\Z/p$-null and, $B^\wedge_p$ is $\Sigma B\Z/p$-null. Assume that $[B\Z/p,E]\rightarrow [B\Z/p,E^\wedge_p]$ is exhaustive and $\pi_1(F)\rightarrow \pi_1(E^\wedge_p)$ is an epimorphism. Then $(CW_{B\Z/p}(E^\wedge_p))^\wedge_p$ is the homotopy fiber of $E^\wedge_p\rightarrow B^\wedge_p$.
\end{prop}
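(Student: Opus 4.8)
The plan is to exploit Chach\'olski's fibration (Theorem \ref{Chacho}) together with the hypotheses to show that the cellularization of $E^\wedge_p$ is controlled, up to $p$-completion, by the fiber $F$. First I would observe that since $F$ is $B\Z/p$-cellular, the $B\Z/p$-equivalence $CW_{B\Z/p}(E)\to E$ has a good chance of factoring through $F$; more precisely, the natural map $F\to E$ induces a map $F\to CW_{B\Z/p}(E)$ by cellularity of $F$ and the universal property of $CW_{B\Z/p}$. The key comparison will then be between the homotopy fiber of $E\to B$, namely $F$, and the cellularization. Because $B$ is $B\Z/p$-null, the composite $CW_{B\Z/p}(E)\to E\to B$ factors through $P_{B\Z/p}$ of the cofiber and hence is nullhomotopic on the cellularization, producing a lift $CW_{B\Z/p}(E)\to F$. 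Combined with the map $F\to CW_{B\Z/p}(E)$, this suggests $CW_{B\Z/p}(E)$ and $F$ should agree up to the $B\Z/p$-equivalence, at least after the reductions below.

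Next I would perform the $p$-completion reduction using Proposition \ref{cw+comp} and Remark \ref{remark-p-toral}: the hypotheses ``$[B\Z/p,E]\to[B\Z/p,E^\wedge_p]$ exhaustive'' and the epimorphism $\pi_1(F)\to\pi_1(E^\wedge_p)$ (which forces $\pi_1(E^\wedge_p)$ to be generated by order $p$ elements lifting to $E^\wedge_p$, since $F$ is $B\Z/p$-cellular and Lemma \ref{fundamentalgroupofCW} applies to it) are exactly designed to let me replace $E$ by $E^\wedge_p$ when computing cellularization up to mod $p$ equivalence. So it suffices to analyze $CW_{B\Z/p}(E^\wedge_p)$ directly. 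I would then run Chach\'olski's fibration
$$CW_{B\Z/p}(E^\wedge_p)\to E^\wedge_p\to P_{\Sigma B\Z/p}(C)$$
and compare it to the given fibration $F^\wedge_p\to E^\wedge_p\to B^\wedge_p$ (here $F^\wedge_p$ is the fiber of $E^\wedge_p\to B^\wedge_p$, and the hypothesis that the spaces are $p$-good makes $B^\wedge_p$ the relevant base). The heart of the matter is to identify the base $P_{\Sigma B\Z/p}(C)$ with $B^\wedge_p$ after $p$-completion.

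For that identification I would use the hypothesis that $B^\wedge_p$ is $\Sigma B\Z/p$-null, together with Lemma \ref{suspension-null}, to argue that the map $E^\wedge_p\to B^\wedge_p$ factors (up to homotopy and $p$-completion) through the Chach\'olski nullification $P_{\Sigma B\Z/p}(C)$, since $\Sigma B\Z/p$-null targets receive maps from the $\Sigma B\Z/p$-nullification of the cofiber. Comparing the two fibrations via the induced map on total spaces (the identity on $E^\wedge_p$), a five-lemma / Zeeman comparison argument on the Serre spectral sequences with $\F_p$ coefficients should show that the fiber inclusion $CW_{B\Z/p}(E^\wedge_p)\to E^\wedge_p$ has the same mod $p$ homology as $F^\wedge_p\to E^\wedge_p$, yielding a mod $p$ equivalence $CW_{B\Z/p}(E^\wedge_p)\to F^\wedge_p$; completing at $p$ then gives the claimed identification of $(CW_{B\Z/p}(E^\wedge_p))^\wedge_p$ with the homotopy fiber.

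\textbf{The main obstacle} I anticipate is precisely the identification of the Chach\'olski base $P_{\Sigma B\Z/p}(C)$ with $B^\wedge_p$: one must check that the natural comparison map is a mod $p$ equivalence rather than merely that both are $\Sigma B\Z/p$-null. This requires controlling the cofiber $C$ of $\bigvee B\Z/p\to E^\wedge_p$ and showing that the only ``non-cellular'' part of $E^\wedge_p$ detected by $P_{\Sigma B\Z/p}$ is exactly the base $B^\wedge_p$; the hypotheses on exhaustiveness of $[B\Z/p,E]\to[B\Z/p,E^\wedge_p]$ and surjectivity of $\pi_1(F)\to\pi_1(E^\wedge_p)$ are the levers that guarantee all order $p$ maps into $E^\wedge_p$ come from the cellular fiber $F$, so that cellularizing kills nothing in the base direction. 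Making this precise — and checking that the relevant fundamental group and connectivity bookkeeping lets the fiberwise comparison go through — is where the real work lies.
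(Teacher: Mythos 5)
Your skeleton is the right one and you have correctly located the crux (identifying the Chach\'olski base with $B^\wedge_p$ up to mod $p$ equivalence), but the proposal does not actually prove that step, and the tools you reach for would not close it. A Zeeman/Serre spectral sequence comparison of the two fibrations over the identity of $E^\wedge_p$ needs as input information about either the fibers or the bases; here the map on bases is exactly what you are trying to establish and the map on fibers is what you want to conclude, so the comparison is circular. Likewise, the observation that $E^\wedge_p\to B^\wedge_p$ factors through the nullification of the cofiber because $B^\wedge_p$ is $\Sigma B\Z/p$-null only produces a map, not an equivalence. Also, Proposition \ref{cw+comp} is not applicable as you invoke it: it requires $\pi_1$ finite and a bijection $[B\Z/p,E]\cong[B\Z/p,E^\wedge_p]$, whereas the present hypotheses give only surjectivity and no finiteness.

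The paper closes the gap with two moves absent from your outline. First, it works with the uncompleted fibration: since $F$ is $B\Z/p$-cellular and $F\to E$ is a $B\Z/p$-equivalence (because $B$ is $B\Z/p$-null), one may take the Chach\'olski cofiber $C$ of $F\to E$ itself; Zabrodsky's lemma applied to $F\to E\to B$ and the map $E\to P_{\Sigma B\Z/p}(C)$ yields a map $B\to P_{\Sigma B\Z/p}(C)$ under $E$, and the long exact sequences of the two fibrations (both with fiber $F$ and total space $E$) show it is an equivalence. Second, the exhaustiveness and $\pi_1$-surjectivity hypotheses let one form the cofiber $D$ of $F\to E^\wedge_p$ with $D$ simply connected and $C\to D$ a mod $p$ equivalence; then Lemma \ref{null+comp}, through the proof of Corollary \ref{null+modp+map}, shows that $P_{\Sigma B\Z/p}(C)\to P_{\Sigma B\Z/p}(D)$ is a mod $p$ equivalence. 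This is exactly where the hypothesis that $B^\wedge_p\simeq P_{\Sigma B\Z/p}(C)^\wedge_p$ is $\Sigma B\Z/p$-null is consumed, the corresponding condition for $P_{\Sigma B\Z/p}(C^\wedge_p)^\wedge_p$ coming from Miller's theorem and $1$-connectivity. Without an argument of this kind showing that nullification preserves the mod $p$ equivalence of cofibers, your proposal remains a plan rather than a proof.
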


\begin{proof} First of all, note that since $B$ is $B\Z/p$-null, then $F\rightarrow E$ is a $B\Z/p$-equivalence, and thus $F\simeq CW_{B\Z/p}(E)$.
To compute the cellularization of $E^{\wedge}_p$ we proceed by applying Chach\'olski's strategy (Theorem \ref{Chacho}, see also \cite[Section 7]{Chacholski96} for the slightly general formulation we use here). Consider the following diagram of horizontal cofibrations,
$$\xymatrix{
F \ar[d] \ar[r]  & E \ar[d] \ar[r] & C \ar[d]^g \\
F \ar[r]^{i}  & E^{\wedge}_p  \ar[r] & D
}$$ where $D$ is $1$-connected since $i$ is an epimorphism on fundamental groups. Since $E$ is $p$-good, $g$ induces an homotopy equivalence $C^\wedge_p\simeq D^\wedge_p$, and therefore $C^\wedge_p$ is also $1$-connected.

Zabrodsky's Lemma (see \cite[Prop 3.4]{MR97i:55028}) applied to the fibration $F\rightarrow E\rightarrow B$ and the composite map $E\rightarrow C\ra P_{\Sigma B\Z/p}(C)$ implies that there is a map $B \rightarrow P_{\Sigma B\Z/p}(C)$ which fits in a diagram of fibrations:
$$\xymatrix{
F \ar[d] \ar[r]  & E \ar[d] \ar[r] & B \ar[d] \\
CW_{B\Z/p}(E) \ar[r]  & E  \ar[r] & P_{\Sigma B\Z/p}C.
}$$ where the first vertical map is a homotopy equivalence, $CW_{B\Z/p}(E)\simeq F$.
The long exact sequence for homotopy groups shows that the last vertical arrow is also a homotopy equivalence.
Now consider the diagram of fibre sequences
$$\xymatrix{
F \ar[d] \ar[r]  & E \ar[d] \ar[r] &P_{\Sigma B\Z/p}C\simeq B \ar[d] \\
CW_{B\Z/p}(E^{\wedge}_p) \ar[r]  & E^{\wedge}_p  \ar[r] & P_{\Sigma B\Z/p}D.
}$$
The spaces  $P_{\Sigma B\Z/p}(C)\simeq B$ and $P_{\Sigma B\Z/p}(C^{\wedge}_p)$ are $p$-good spaces (note that $P_{\Sigma B\Z/p}(C^{\wedge}_p)$ is $1$-connected by Lemma \ref{1-connectednullification}) and Miller's theorem apply to show that  $P_{\Sigma B\Z/p}(C^{\wedge}_p)^{\wedge}_p$ is $\Sigma B\Z/p$-null. Also $P_{\Sigma B\Z/p}(C)^{\wedge}_p\simeq B^{\wedge}_p$ is $\Sigma B\Z/p$-null by hypothesis. Applying Lemma \ref{null+comp} and the proof of Corollary \ref{null+modp+map}, we obtain that the composite $P_{\Sigma B\Z/p}(C)\simeq B \rightarrow P_{\Sigma B\Z/p}(D)$ is a mod $p$ equivalence, and therefore we conclude that  the $p$-completion $(CW_{B\Z/p}(E^{\wedge}_p))^\wedge_p$ is the homotopy fiber of $E^\wedge_p \rightarrow B^{\wedge}_p$ is a mod $p$ equivalence by $p$-completion of the fibration $CW_{B\Z/p}(E^\wedge_p)\rightarrow E^\wedge_p\rightarrow P_{\Sigma B\Z/p}(D)$.
\end{proof}

\begin{cor}
Let $F\rightarrow E\rightarrow B$ be a fibration of connected spaces such that $F$ is $B\Z/p$-cellular, $B$ is $B\Z/p$-null and $B^\wedge_p\simeq *$. Assume that $[B\Z/p,E]\rightarrow [B\Z/p,E^\wedge_p]$ is exhaustive, $\pi_1(F)\rightarrow \pi_1(E^\wedge_p)$ is an epimorphism and $\pi_i(E)$ are finite groups for all $i\geq 1$. Then $E^\wedge_p$ is $B\Z/p$-cellular.
\end{cor}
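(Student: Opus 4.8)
The plan is to read this corollary as the special case $B^\wedge_p\simeq *$ of Proposition~\ref{nonmodular-fibration}, followed by an argument that the relevant cellularization is already $p$-complete. First I would verify the hypotheses of Proposition~\ref{nonmodular-fibration}: the spaces are connected and $p$-good (for $E$ this uses that $\pi_1(E)$ is finite, by \cite[VII.5.1]{BK}), $F$ is $B\Z/p$-cellular, $B$ is $B\Z/p$-null, and $B^\wedge_p\simeq *$ is in particular $\Sigma B\Z/p$-null; the exhaustivity of $[B\Z/p,E]\to[B\Z/p,E^\wedge_p]$ and the surjectivity of $\pi_1(F)\to\pi_1(E^\wedge_p)$ are assumed outright. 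The proposition then identifies $(CW_{B\Z/p}(E^\wedge_p))^\wedge_p$ with the homotopy fibre of $E^\wedge_p\to B^\wedge_p\simeq *$, which is $E^\wedge_p$ itself. Hence $(CW_{B\Z/p}(E^\wedge_p))^\wedge_p\simeq E^\wedge_p$, and the real task is to remove the outer $p$-completion.

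Write $W=CW_{B\Z/p}(E^\wedge_p)$. The strategy is to show that $W$ is $p$-complete, so that the augmentation $W\to E^\wedge_p$ is (up to the canonical identification) the completion map $W\to W^\wedge_p\simeq E^\wedge_p$ and therefore an equivalence, giving $B\Z/p$-cellularity of $E^\wedge_p$. Since $\pi_1(E)$ is finite, $E^\wedge_p$ is $p$-complete with $\pi_1(E^\wedge_p)$ a finite $p$-group; applying fibrewise $p$-completion to $E\langle 1\rangle\to E\to B\pi_1(E)$, as in the proof of Lemma~\ref{pq}, exhibits $E^\wedge_p$ as the total space of a fibration with simply connected $p$-complete fibre and base $B\pi_1(E^\wedge_p)$, whence $E^\wedge_p$ is nilpotent (a finite $p$-group acts nilpotently on the pro-$p$ higher homotopy). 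By Lemma~\ref{nilpotent-cellularization} the cellularization $W$ is then nilpotent as well, so Lemma~\ref{p-complete-cellularization} applies: $W$ is $p$-complete if and only if $\tilde H_*(W^\wedge_p;\Q)=0$. As $W^\wedge_p\simeq E^\wedge_p$, this reduces everything to proving $\tilde H_*(E^\wedge_p;\Q)=0$.

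This rational vanishing is the heart of the matter and is exactly where the hypothesis that all $\pi_i(E)$ are finite is indispensable. The universal cover $E\langle 1\rangle$ is simply connected with finite homotopy groups $\pi_i(E)$ for $i\geq 2$, so its $p$-completion has finite $p$-group homotopy groups and is rationally trivial; the base $B\pi_1(E^\wedge_p)$ is rationally trivial because $\pi_1(E^\wedge_p)$ is a finite $p$-group; a Serre spectral sequence argument over the fibration above then yields $\tilde H_*(E^\wedge_p;\Q)=0$. I expect this step to be the main obstacle, since it is precisely the finiteness that blocks the pathology of Example~\ref{no1connected}, where $K(\Z/p^\infty,2)^\wedge_p\simeq K(\Z^\wedge_p,3)$ acquires rational cohomology from an infinite homotopy group; the crux is to confirm that $p$-completion introduces no free $\Z^\wedge_p$ summand in homotopy. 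With $\tilde H_*(E^\wedge_p;\Q)=0$ established, Lemma~\ref{p-complete-cellularization} shows $W$ is $p$-complete, so $CW_{B\Z/p}(E^\wedge_p)\simeq W\simeq W^\wedge_p\simeq E^\wedge_p$ and $E^\wedge_p$ is $B\Z/p$-cellular.
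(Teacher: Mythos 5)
Your proposal is correct and follows essentially the same route as the paper: apply Proposition~\ref{nonmodular-fibration} to get $(CW_{B\Z/p}(E^\wedge_p))^\wedge_p\simeq E^\wedge_p$, note that finiteness of the $\pi_i(E)$ makes $E^\wedge_p$ nilpotent with finite $p$-group homotopy (hence rationally trivial), invoke Lemma~\ref{nilpotent-cellularization} to get nilpotence of the cellularization, and conclude $p$-completeness by the arithmetic square (Lemma~\ref{p-complete-cellularization}). The rational vanishing you flag as the ``main obstacle'' is exactly what the paper disposes of by citing \cite[VII.4.3]{BK}, so there is no gap.
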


\begin{proof}
By Proposition \ref{nonmodular-fibration}, we know that $(CW_{B\Z/p}(E^\wedge_p))^\wedge_p\simeq E^\wedge_p$. We will prove that $CW_{B\Z/p}(E^\wedge_p)$ is $p$-complete. Since  $\pi_i(E)$ are finite groups for all $i\geq 1$, $\pi_i(E^\wedge_p)$ are all finite $p$-groups and $E^\wedge_p$ is nilpotent (\cite[VII.4.3]{BK}). Therefore $CW_{B\Z/p}(E^\wedge_p)$ is nilpotent by Lemma \ref{nilpotent-cellularization}. A Sullivan's  arithmetic square argument shows that $CW_{B\Z/p}(E^\wedge_p)$ is $p$-complete since $((CW_{B\Z/p}(E^\wedge_p))^\wedge_p)_\Q\simeq (E^\wedge_p)_\Q\simeq *$.
\end{proof}

\begin{ejem}\label{rorder}
Let $G$ be a finite group which is an extension $H\rightarrow G\rightarrow W$ where $BH$ is $B\Z/p$-cellular and $(|W|,p)=1$. Then $CW_{B\Z/p}(BG)\simeq BH$ and $BG^\wedge_p$ is $B\Z/p$-cellular by the previous result. Note that $G$ does not need to be generated by elements of order $p$; compare with \cite[Section 4]{FS07}. Other examples are provided by nilpotent Postnikov pieces whose fundamental group is of order prime to $p$ and the $1$-connected cover is $p$-torsion.
\end{ejem}

\begin{ejem}\label{p'-toral}
Let $N$ be an extension of a finite group of order prime to $p$ with a torus, that is, we have a fibration $BT\rightarrow BN \rightarrow BW$ where $T\cong (S^1)^n$ and $(|W|,p)=1$. From this fibration we see that $CW_{B\Z/p}(BN)\simeq CW_{B\Z/p}(BT)\simeq BV$ where $V\cong (\Z/p)^n$, as $BW$ is $B\Z/p$-null and $BT\rightarrow BN$ is a $B\Z/p$-equivalence.

Next we compute the cellularization of $(BN)^\wedge_p$. First, by \cite[Prop. 7.5]{Broto-Kitchloo}, there is a bijection $[B\Z/p,BN]\cong[B\Z/p,BN^\wedge_p]$.  Consider the following diagram of horizontal cofibrations,
$$\xymatrix{
BV \ar[d] \ar[r]  & BN \ar[d] \ar[r] & C \ar[d]^g \\
BV \ar[r]^{i}  & BN^{\wedge}_p  \ar[r] & D.
}$$ where $D$ is $1$-connected since $BN^{\wedge}_p$ is also $1$-connected. Therefore $P_{\Sigma B\Z/p}(D)$ is also $1$-connected by Lemma \ref{1-connectednullification}. Since $\pi_1(C)$ is finite, $C$ is $p$-good. Moreover, $g$ is a mod $p$ equivalence, therefore $C^\wedge_p$ is $1$-connected.  Now consider the following diagram of fibrations:
$$\xymatrix{
BV \ar[d] \ar[r]  & BN \ar[d] \ar[r] &P_{\Sigma B\Z/p}C \ar[d] \\
CW_{B\Z/p}(BN^{\wedge}_p) \ar[r]^{i}  & BN^{\wedge}_p  \ar[r] & P_{\Sigma B\Z/p}D.
}$$
We will show that $P_{\Sigma B\Z/p}(g)\colon P_{\Sigma B\Z/p}C\rightarrow P_{\Sigma B\Z/p}D$ is a mod $p$ equivalence. Since $g$ is a mod $p$ equivalence, and also $P_{\Sigma B\Z/p}(\eta_D)\colon P_{\Sigma B\Z/p}(D)\rightarrow P_{\Sigma B\Z/p}(D^\wedge_p)$ by Corollary \ref{null+modp}, we only need to prove that  $P_{\Sigma B\Z/p}(\eta_C)\colon P_{\Sigma B\Z/p}(C)\rightarrow P_{\Sigma B\Z/p}(C^\wedge_p)$ is also a mod $p$ equivalence by checking that $C$ satisfies the hypothesis of Lemma \ref{null+comp}.

Consider the following diagram of fibrations
$$\xymatrix{
T/V \ar[r] \ar[d] & BV \ar[r] \ar[d]  & BT \ar[d] \\
E(T/V) \ar[r] \ar[d]  & BN \ar[r] \ar[d] & BN \ar[d] \\
B(T/V) \ar[r] & P_{\Sigma B\Z/p}(C) \ar[r]^f & BW.
}$$
where $f$ exists by Zabrodsky's Lemma (see \cite[Prop 3.4]{MR97i:55028}) applied to the fibration $BV\rightarrow BN \rightarrow P_{\Sigma B\Z/p}(C)$ and the map $BN\rightarrow BW$. It  implies that there is a map $P_{\Sigma B\Z/p}(C) \rightarrow BG$ which fits in a diagram of fibrations. The bottom fibration shows that $P_{\Sigma B\Z/p}(C)$ is homotopy equivalent to the classifying space of a compact Lie group whose fundamental group is $G$.

Now we check that $C$ satisfies the hypothesis of Lemma \ref{null+comp}. First, $C$ and $P_{\Sigma B\Z/p}(C)$ are $p$-good since they have finite fundamental groups (\cite[VII.5.1]{BK}), $P_{\Sigma B\Z/p}(C^\wedge_p)$ is $1$-connected and therefore it is also $p$-good. It remains to check that $P_{\Sigma B\Z/p}(C)^\wedge_p$ and  $P_{\Sigma B\Z/p}(C^\wedge_p)^\wedge_p$ are $\Sigma B\Z/p$-null spaces. Since $P_{\Sigma B\Z/p}(C)$ is homotopy equivalent to the classifying space of a compact Lie group, its $p$-completion is $\Sigma B\Z/p$-null (see e.g. \cite[Prop 7.5]{Broto-Kitchloo}). Finally $P_{\Sigma B\Z/p}(C^\wedge_p)^\wedge_p$ is also $\Sigma B\Z/p$-null since $P_{\Sigma B\Z/p}(C^\wedge_p)$ is $1$-connected by Theorem \ref{bastaenp-Miller}.

Summarizing, $(CW_{B\Z/p}(BN^\wedge_p))^\wedge_p$ is the homotopy fiber of $BN^\wedge_p\rightarrow BK^\wedge_p$ where $K$ is an extension of $W$ by $T/V$ and $V$ is the maximal elementary abelian $p$-subgroup in the torus $T$.
\end{ejem}

Our next example concerns $p$-toral groups. Recall that a $p$-toral group is an extension of a torus by a finite $p$-group.
A $p$-compact toral group is an extension of a $p$-compact torus by a finite $p$-group, and a discrete $p$-toral group is a group $P$ with normal subgroup $T$ such that $T$ is isomorphic to a finite product of copies of $\Z/p^\infty$ and $P/T$ is a finite $p$-group.

Since $CW_{B\Z/p}(BT^{\wedge}_p)\simeq CW_{B\Z/p}(BT)$ by Lemma \ref{bastaenp} and  $CW_{B\Z/p}(BT)\simeq BV$ where $V$ is the subgroup of elements of order $p$, the following
is also true for $p$-compact toral groups.

\begin{ejem}\label{cw-p-toral}
Let $P$ be a $p$-toral group with group of components $\pi$. First of all, by \cite[Proposition 2.1]{CCS2}, we can assume that $\pi$ is a finite $p$-group generated by elements of order $p$ which lift to $BP$. By Proposition \ref{cw+comp} and Remark \ref{remark-p-toral}, there is a mod $p$ equivalence $CW_{B\Z/p}(BP)\rightarrow CW_{B\Z/p}(BP^\wedge_p)$.
Dwyer and Wilkerson show in \cite{DW-annals} that there exists a discrete $p$-toral group $P_\infty$ such that $BP_\infty \rightarrow BP$ is a mod $p$ equivalence. We are reduced then to study the cellularization of discrete $p$-toral groups.

Following \cite[Section 4]{FS07}, we consider $\Omega_1(P_\infty)$, the subgroup generated by the elements of order $p$. Since a subgroup of a $p$-toral discrete group is also a $p$-toral discrete group and the map $B\Omega_1(P_\infty)\rightarrow BP_\infty$ is a $B\Z/p$-cellular equivalence (note that  $\map_*(B\Z/p,BP_\infty)\simeq Hom(\Z/p, P_\infty)$), we can assume that $P_\infty$ is generated by elements of order $p$. For any $p$-discrete toral group there is an increasing sequence  $P_0\leq P_1\leq \cdots $ such that $P_\infty=\cup P_n$. Take a countable set of generators of order $p$ for $P_\infty$, $\{g_i|i=1,\ldots,n\}$; then the subgroups $Q_n=\langle g_1,\ldots, g_n\rangle$ satisfy that $P_\infty=\cup Q_n$ and each $Q_n$ is a finite $p$-group generated by elements of order $p$, so by \cite[Prop 4.14, Prop 4.8]{Ramon}, $BQ_n$ is $B\Z/p$-cellular and therefore $BP_\infty$ is so.

Finally the space $B\Omega_1(P_\infty)$ is $B\Z/p$-cellular, so it remains to check that $B\Omega_1(P_\infty)\rightarrow CW_{B\Z/p}(BP^\wedge_p)$ is a mod $p$ equivalence. Let $C_{BP_\infty}$ and $C_{BP^\wedge_p}$ be the corresponding Chach-\'{o}lski's cofibres.  Zabrodsky's Lemma (see \cite[Prop 3.4]{MR97i:55028}) applied to the fibration $B\Omega_1(P_\infty)\rightarrow BP_\infty \rightarrow P_{\Sigma B\Z/p}(C_{BP_\infty})$ and the map $BP_\infty\rightarrow B(P_\infty/\Omega_1(P_\infty))$ shows that there is a homotopy equivalence $ P_{\Sigma B\Z/p}(C_{BP_\infty})\simeq B(P_\infty/\Omega_1(P_\infty))$. In particular, $C_{BP\infty}$ satisfies the hypothesis of Lemma \ref{null+comp}. Moreover, $C_{BP^\wedge_p}$ is $1$-connected.

The map $g\colon C_{BP\infty}\rightarrow C_{BP^\wedge_p}$ is a mod $p$ equivalence and, by Remark \ref{remark+null+map} and Corollary \ref{null+modp+map}, $P_{\Sigma B\Z/p}(g)$ is a mod $p$ equivalence. Finally, Proposition \ref{cw+comp} combined with the previous results, show that $B\Omega_1(P_\infty)\rightarrow CW_{B\Z/p}(BP^\wedge_p)$ is a mod $p$ equivalence.

In particular, from Example \ref{O(2)} we obtain that there are mod $2$ equivalences $BD_{2^\infty}\rightarrow CW_{B\Z/2}(BO(2))\rightarrow CW_{B\Z/2}(BO(2)^\wedge_2)$, and hence a chain of homotopy equivalences $CW_{B\Z/2}(BO(2))^\wedge_2\simeq CW_{B\Z/2}(BO(2)^\wedge_2)^\wedge_2\simeq BO(2)^\wedge_2$.
\end{ejem}

We finish the section with a last example in which we can observe a completely different pattern, and where the cellularization is obtained by combining in an adequate way some nice push-out decompositions.

\begin{prop}
\label{BSO}
The $\emph{B}\Z /2$-cellularization of ${B}SO(3)$ fits in a
fibration $$({CW}_{\emph{B}\Z
/2}{B}SO(3))^{\wedge}_2 \ra{B}SO(3)^{\wedge}_2 \ra ({B}SO(3)^{\wedge}_2)_\Q.$$
\end{prop}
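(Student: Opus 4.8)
The plan is to realise the required fibration as Chach\'olski's fibration for $BSO(3)^\wedge_2$, after identifying its base with the rationalisation $(BSO(3)^\wedge_2)_\Q$. First I would pass to the $2$-complete setting: since $BSO(3)$ is $1$-connected of finite type and $\Omega BSO(3)\simeq SO(3)$ is a finite complex, Miller's Theorem \ref{bastaenp-Miller} shows it is $\Sigma B\Z/2$-null, and Corollary \ref{bastaenp} shows that $\eta\colon BSO(3)\to BSO(3)^\wedge_2$ is a $B\Z/2$-equivalence, so that $CW_{B\Z/2}(BSO(3))\simeq CW_{B\Z/2}(BSO(3)^\wedge_2)$ and we may work with $BSO(3)^\wedge_2$ throughout. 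Applying Theorem \ref{Chacho} to $X=BSO(3)^\wedge_2$ produces a fibration
$$CW_{B\Z/2}(BSO(3)^\wedge_2)\rightarrow BSO(3)^\wedge_2\rightarrow P_{\Sigma B\Z/2}(D),$$
where $D$ is the cofibre of the evaluation $\bigvee_{[B\Z/2,BSO(3)^\wedge_2]}B\Z/2\rightarrow BSO(3)^\wedge_2$. As $BSO(3)^\wedge_2$ is $1$-connected the cofibre $D$ is $1$-connected, and so is the base $P_{\Sigma B\Z/2}(D)$ by Lemma \ref{1-connectednullification}.

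The core of the proof is to show $P_{\Sigma B\Z/2}(D)\simeq (BSO(3)^\wedge_2)_\Q$; since the base is nilpotent I would check this prime by prime through a Sullivan arithmetic square. Rationally $\Sigma B\Z/2$ is acyclic, so the augmentation is a rational equivalence and $P_{\Sigma B\Z/2}(D)_\Q\simeq (BSO(3)^\wedge_2)_\Q$. For an odd prime $q$ both $B\Z/2$ and $BSO(3)^\wedge_2$ are $\F_q$-acyclic (the latter because it is $2$-complete and $1$-connected, cf. Lemma \ref{pq}), hence $D$ is $\F_q$-acyclic, and as $\Sigma B\Z/2$ is $\F_q$-acyclic so is $P_{\Sigma B\Z/2}(D)$. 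The decisive point is $\F_2$-acyclicity of the base, which is equivalent to the statement that the augmentation $CW_{B\Z/2}(BSO(3))\to BSO(3)$ is a mod $2$ equivalence, i.e. that the cellularisation recovers all of $H^*(BSO(3);\F_2)=\F_2[w_2,w_3]$. Granting this, the base has trivial reduced $\F_q$-homology for every prime $q$ and the rational homotopy type of $BSO(3)^\wedge_2$; being nilpotent it must then be $(BSO(3)^\wedge_2)_\Q$ by the arithmetic square, and the Chach\'olski fibration becomes the asserted one, with fibre the $B\Z/2$-cellularisation appearing (up to completion) in the statement.

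To establish the $\F_2$-acyclicity I would use the mod $2$ homology (push-out) decomposition of $BSO(3)$ over the mod $2$ acyclic orbit category of its $2$-stubborn subgroups, whose values are the classifying spaces of $O(2)$ and of the finite octahedral-type subgroups, glued along their intersections (a dihedral $2$-group). The $B\Z/2$-cellularisation of each building block is already understood: $CW_{B\Z/2}(BO(2))$ is computed in Examples \ref{O(2)} and \ref{cw-p-toral}, while the relevant finite pieces are generated by order $2$ elements and are therefore $B\Z/2$-cellular. Since $CW_{B\Z/2}$ and $P_{\Sigma B\Z/2}$ interact well with homotopy colimits over a mod $2$ acyclic indexing category, recombining the cellularised pieces should reproduce the full mod $2$ cohomology of $BSO(3)$, which is exactly the $\F_2$-acyclicity of $P_{\Sigma B\Z/2}(D)$; the nilpotence of the cellularisation (Lemma \ref{nilpotent-cellularization}) together with its $\Q$- and $\F_q$-acyclicity (Lemma \ref{acyclic-cellularization}) guarantee that the arithmetic square recombination of the base is legitimate.

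The step I expect to be the main obstacle is precisely this $\F_2$-computation. Unlike the rank-one case $BS^1$, where $CW_{B\Z/2}(BS^1)=B\Z/2$ fails to be a mod $2$ equivalence and the nullification base retains mod $2$ homology, for $BSO(3)$ one must exploit the rank-two elementary abelian subgroup $(\Z/2)^2$ and the detecting family of its $2$-stubborn subgroups to see that no mod $2$ class of $BSO(3)$ survives into $P_{\Sigma B\Z/2}(D)$. Keeping track of the gluing data in the push-out decomposition, and of the compatibility of cellularisation with it, is where the real work lies.
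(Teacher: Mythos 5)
Your reduction to $BSO(3)^\wedge_2$, the use of Chach\'olski's fibration, and the arithmetic-square identification of the base are all in line with the paper, and your rational and odd-primary computations are fine. But the decisive step --- mod $2$ acyclicity of the base --- is exactly the one you leave open, and the route you sketch for it does not go through as stated. If $D$ is the cofibre of the evaluation $\bigvee B\Z/2\to BSO(3)^\wedge_2$, then $D$ itself has plenty of mod $2$ homology (the wedge of copies of $B\Z/2$ is nowhere near surjective onto $H_*(BSO(3);\F_2)=\F_2[w_2,w_3]$ in homology), so you would have to show that the nullification $P_{\Sigma B\Z/2}$ annihilates all of it; you give no mechanism for that. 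Moreover, the phrase ``$CW_{B\Z/2}$ and $P_{\Sigma B\Z/2}$ interact well with homotopy colimits over a mod $2$ acyclic category'' is not a theorem you can invoke: neither functor commutes with homotopy colimits. What is true, and what the paper actually uses, is only that a pointed homotopy pushout of $B\Z/2$-cellular spaces is again $B\Z/2$-cellular.

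The paper's resolution is to sidestep the cofibre $D$ entirely. One cellularizes the Dwyer--Miller--Wilkerson pushout $BD_8\to BO(2)^\wedge_2$, $BD_8\to(B\Sigma_4)^\wedge_2$ termwise (each piece being cellular or understood up to mod $2$ equivalence via $BD_{2^\infty}$), forms the pushout $Y$ of the cellularized diagram, and observes that $Y\to BSO(3)^\wedge_2$ is a mod $2$ equivalence because each augmentation is. Then one invokes the generalized form of Chach\'olski's theorem (\cite[Thm 20.3]{Chacholski96} together with \cite[Thm 1.1]{FS07}), which computes $CW_{B\Z/2}(X^\wedge_2)$ as the fibre of $X^\wedge_2\to P_{\Sigma B\Z/2}(C)$ where $C$ is the cofibre of $Y\to X^\wedge_2$, \emph{provided} $Y$ is $B\Z/2$-cellular and $[B\Z/2,Y]\to[B\Z/2,X^\wedge_2]$ is exhaustive; the latter is a genuine verification, carried out via $\varinjlim[B\Z/2,F]$ and \cite[Lemma 4.2]{BLO2}, that is absent from your sketch. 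With this replacement the cofibre $C$ is mod $2$ acyclic \emph{by construction}, mod $q$ acyclic for odd $q$, and rationally equivalent to $(BSO(3)^\wedge_2)_\Q$, so the arithmetic square identifies $C\simeq(BSO(3)^\wedge_2)_\Q$, which is already $\Sigma B\Z/2$-null and hence equals its own nullification. Your instinct that the mod $2$ computation is ``where the real work lies'' is correct; the missing idea is to feed the cellular pushout $Y$, rather than the wedge of copies of $B\Z/2$, into Chach\'olski's machine.
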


\begin{proof}
Since $SO(3)$ is connected, by  Lemma \ref{bastaenp} the $p$-completion induces a
homotopy equivalence ${CW}_{ B\Z /p}BSO(3)\simeq{CW}_{ B\Z /p}(BSO(3)^{\wedge}_p)$. According to \cite[Cor 4.2]{DMW87},
$BSO(3)$ is equivalent at the prime 2 to the pushout $X$ of the following
diagram:
$$
\xymatrix{BD_8 \ar[r]^{f_2} \ar[d]^{f_1} & BO(2)^{\wedge}_2 \ar[d]^g \\
(B\Sigma_4)^{\wedge}_2 \ar[r] & X,\\ }
$$ where $f_1$ is induced by inclusion of the 2-Sylow subgroup, and $f_2$ is
given by the map of extensions
$$
\xymatrix{ \Z /4 \ar[d] \ar[r] & D_8 \ar[d]^{f_2} \ar[r] & \Z /2 \ar[d]
\\
SO(2) \ar[r] & O(2) \ar[r] & \Z /2. \\ }
$$

Our strategy will be to cellularize the previous diagram, and
compare the respective pushouts. Now, recall that $ B D_8$ is
$ B\Z /2$-cellular (\cite[4.14]{Ramon}) and moreover the
2-completion of $ B\Sigma_4$ is so (\cite[Thm 4.4]{FS07}).
On the other hand, $D_{2^\infty}$ is a $2$-discrete approximation of $O(2)$ -i.e.$BD_{2^\infty}\ra BO(2)$ is a mod $2$ equivalence-, so the previous Example \ref{cw-p-toral} implies
$BD_{2^\infty}\rightarrow CW_{B\Z/2}(BO(2))$ is a mod $2$ equivalence. Moreover, by Proposition \ref{cw+comp} and Remark \ref{remark-p-toral}, there is also a mod $2$ equivalence $CW_{B\Z/2}(BO(2))\rightarrow CW_{B\Z/2}(BO(2)^\wedge_2)$. So, we can consider another pushout diagram by applying the functor $CW_{B\Z/2}$ to the previous one,
$$
\xymatrix{ B D_8 \ar[r] \ar[d]^{f_1} &  CW_{B\Z/2}(BO(2)^{\wedge}_2) \ar[d]^h \\
( B\Sigma_4)^{\wedge}_2 \ar[r] & Y\\ }
$$

There exists a map $g\colon Y\ra
X$ induced by the augmentation map from the cellularization which is a mod $2$ equivalence since $CW_{B\Z/2}(BO(2)^{\wedge}_2)\rightarrow BO(2)^{\wedge}_2$ is so.

 Now we attempt to compute the $B\Z/2$-cellularization of $X^{\wedge}_2$ by using the cofibre of the map $k\colon Y\rightarrow X^\wedge_2$.  In order to do this, a result of Chach\'{o}lski \cite[Thm 20.3]{Chacholski96} together with \cite[Thm 1.1]{FS07} tells us that we need to check that $[B\Z/2, Y]\rightarrow [B\Z/2,Y^{\wedge}_2]\cong[B\Z/2,X^{\wedge}_2]$ is exhaustive and $Y$ is $B\Z/2$-cellular. $Y$ is $B\Z/2$-cellular since it is a pushout of  $B\Z/2$-cellular spaces. It remains to check that $[B\Z/2, Y]\rightarrow [B\Z/2,Y^{\wedge}_2]$ is exhaustive.

Let $\mathcal P$ be the category $1\leftarrow 0 \rightarrow 2$ describing a pushout diagram, and let $F\colon \mathcal P \rightarrow Top$ be the functor describing the pushout for $Y$, that is, $F(1)=( B\Sigma_4)^{\wedge}_2$, $F(0)= B D_8$ and $F(2)=CW_{B\Z/2}(BO(2)^{\wedge}_2)$ with the corresponding morphisms. There is a commutative diagram of sets
$$
\xymatrix{
\limdir[B\Z/2,F] \ar[r] \ar[d]^{(\eta_F)_*} & [B\Z/2,Y] \ar[d]^{\eta_*} \\
\limdir [B\Z/2,F^\wedge_2] \ar[r] & [B\Z/2,Y^\wedge_2] }
$$
where the vertical maps are induced by $2$-completion of the target. Since the spaces $\map(B\Z/2,F^\wedge_2)$ are $2$-complete (see \cite[Proposition 7.5]{Broto-Kitchloo}), by \cite[Lemma 4.2]{BLO2} the bottom horizontal map is a bijection. To prove that $\eta_*$ is exhaustive, it is enough to show that $(\eta_F)_*$ is so. But then, looking at the diagram, it reduces to check that $[B\Z/2, CW_{B\Z/2}(BO(2)^\wedge_2)]\rightarrow [B\Z/2, CW_{B\Z/2}(BO(2)^\wedge_2)^\wedge_2]\cong[B\Z/2,BO(2)^\wedge_2]$ is exhaustive  (see Example \ref{cw-p-toral}) and this follows from Corollary \ref{unpointed}.

Let $C$ be the cofibre of $k$. We know that $C$ is mod $2$ acyclic and $1$-connected. Now if $q$ is an odd prime, since $Y$ is $B\Z/2$-cellular, $Y$ is mod $q$ acyclic and $C^{\wedge}_q\simeq (BSO(3)^\wedge_2)^{\wedge}_q$ is contractible. Finally $C_\Q\simeq (BSO(3)^\wedge_2)_\Q$. Then, by a Sullivan arithmetic square argument, $C\simeq (BSO(3)^\wedge_2)_\Q$ which is, in turn, $B\Z/2$-null. In particular $C$ is $\Sigma B\Z/2$-null. Therefore, the fibration of the theorem follows from Chach\'olski's fibration describing the cellularization (Theorem \ref{Chacho}).
\end{proof}

\begin{remark}
Note that if $p$ is an odd prime, then $BSO(3)^{\wedge}_p\simeq BN(T)^{\wedge}_p$, where $N(T)$ is the normalizer of the maximal torus, and we analyzed this case in Example \ref{p'-toral}.
\end{remark}

It seems natural to ask  if the problem of computing $CW_{B\Z/p}(BG)$ for any compact Lie group $G$ is accessible at this point. A strategy was developed for finite groups in \cite{FFcw}, based in the description of the strongly closed subgroups of $G$, which are classified. Recent research has remarked the role of the strongly closed subgroups of discrete $p$-toral groups in the homotopy theory of compact Lie groups and, more generally,  $p$-local compact groups \cite{Alex}, but to our knowledge there is no available classification of these objects. On the other hand, the nontrivial rational homotopy of $BG$ seems an important obstacle to generalize the arithmetic square arguments of the strategy. We plan to undertake these issues in subsequent work, and, in particular, an intriguing question which arises in a natural way from the last example:
\medskip

\textbf{Question}: For which class of classifying spaces of compact Lie groups (or spaces in general) is the $B\Z /p$-cellularization equivalent to the
homotopy fibre of the rationalization, up to $p$-completion?


{\bf Acknowledgements.} We would like to thank Carles Broto and J\'{e}r\^{o}me Scherer for interesting conversations on this subject.


\end{document}